\pgfplotsset{
    discard if not/.style 2 args={
        x filter/.code={
            \edef\tempa{\thisrow{#1}}
            \edef\tempb{#2}
            \ifx\tempa\tempb--
            \else
                
            \fi
        }
    }
}
\pgfplotsset{compat=1.14}
\definecolor{myblue}{rgb}{0,0,0.6}     
\providecommand{\red}{\textcolor{red}}
\definecolor{myblue}{rgb}{0,0,0.6}
\definecolor{amcol}{rgb}{0.8,0,0}
\definecolor{rvcol}{rgb}{1,0,0}
\newcommand{\rv}[1]{{{#1}}}
\definecolor{lmigcol}{rgb}{0.8,0.3,0}
\newcommand{\di}{\,\mathrm{d}}
\newcommand{\iin}{\;\text{in}\;}
\newcommand{\oon}{\;\text{on}\;}
\newcommand{\deO}{{\partial\Omega}}
\newcommand*{\conj}[1]{\overline{#1}}
\newcommand*{\N}[1]{\left\|#1\right\|}
\newcommand*{\abs}[1]{\left|#1\right|}
\newcommand{\Tnorm}[1]{|||#1|||}
\newcommand*{\jmp}[1]{[\![#1]\!]}    
\newcommand*{\mvl}[1]{\{\!\!\{#1\}\!\!\}}
\DeclareMathOperator{\diam}{diam}
\newcommand{\Uu}[1]{{\mathbf{#1}}}
\newcommand{\IN}{\mathbb{N}}\newcommand{\IP}{\mathbb{P}}
\newcommand{\IR}{\mathbb{R}}
\newcommand{\IU}{\mathbb{U}}
\newcommand{\IW}{\mathbb{W}}
\newcommand{\ba}{{\Uu a}}\newcommand{\bb}{{\Uu b}}
\newcommand{\be}{{\Uu e}}\newcommand{\bn}{{\Uu n}}
\newcommand{\bx}{{\Uu x}}
\newcommand{\bH}{{\Uu H}}\newcommand{\bN}{{\Uu N}}
\newcommand{\bV}{{\Uu V}}
\newcommand{\bsigma}{{\boldsymbol \sigma}}\newcommand{\btau}{{\boldsymbol\tau}}		
\newcommand{\bzero}{\Uu{0}}
\newcommand{\calA}{{\mathcal A}}
\newcommand{\calE}{{\mathcal E}}
\newcommand{\calF}{{\mathcal F}}
\newcommand{\calO}{{\mathcal O}}
\newcommand{\calT}{{\mathcal T}}
\newtheorem{theorem}{Theorem}[section]
\newtheorem{prop}[theorem]{Proposition}
\newtheorem{rem}[theorem]{Remark}
\newcommand{\OO}{{(\Omega)}}
\newcommand{\mmbox}[1]{\fbox{\ensuremath{\displaystyle{ #1 }}}}
\newcommand{\ee}{{\rm e}}
\newcommand{\tand}{\text{ and }}
\newcommand{\tfor}{\text{ for }}
\newcommand{\LtO}{L^2(\Omega)}
\newcommand{\supp}{\operatorname{supp}}
\newcommand{\deK}{{\partial K}}
\newcommand{\Fh}{\calF_h}
\newcommand{\Th}{{(\calT_h)}}
\newcommand{\GD}{{\Gamma_{\mathrm D}}}
\newcommand{\GN}{{\Gamma_{\mathrm N}}}
\newcommand{\GR}{{\Gamma_{\mathrm R}}}
\newcommand{\FD}{{\Fh^{\mathrm D}}}
\newcommand{\FN}{{\Fh^{\mathrm N}}}
\newcommand{\FR}{{\Fh^{\mathrm R}}}
\newcommand{\FT}{{\Fh^T}}
\newcommand{\FO}{{\Fh^0}}
\newcommand{\rtime}{{\mathrm{time}}}
\newcommand{\rspace}{{\mathrm{space}}}
\newcommand{\Fspa}{{\Fh^\rspace}}
\newcommand{\Ftime}{{\Fh^\rtime}}
\newcommand{\tht}{\vartheta} 
\newcommand{\deKspa}{{\partial^\rspace K}}
\newcommand{\deKtime}{{\partial^\rtime K}}
\newcommand{\hp}{_{hp}}
\newcommand\Vhp{v\hp}
\newcommand\Shp{\bsigma\hp}
\newcommand*{\Norm}[1]{\left\|#1\right\|}
\newcommand{\DG}{_{\mathrm{DG}}}
\newcommand{\DGp}{_{\mathrm{DG^+}}}
\newcommand{\bVp}{{\bV\hp}}
\newcommand{\vs}{{(v,\bsigma)}}
\newcommand{\vsh}{{(v\hp,\bsigma\hp)}}
\newcommand{\wt}{{(w,\btau)}}
\newcommand{\wth}{{(w\hp,\btau\hp)}}
\DeclareMathOperator{\spn}{span}
\newcommand{\mi}{{\boldsymbol{i}}}
\newcommand{\mj}{{\boldsymbol{j}}}
\newcommand{\mk}{{\boldsymbol{k}}}
\providecommand{\ind}{\lambda}
\newcommand\refb{b_J}
\newcommand{\QT}{{\mathbb{Q\!T}}}
\newcommand{\QU}{{\mathbb{Q\!U}}}
\newcommand{\QW}{{\mathbb{Q\!W}}}
\newcommand{\PP}{{\mathbb{Y}}}
\definecolor{pscol}{rgb}{0,0.6,0}
\pgfplotsset{
    discard if not/.style 2 args={
        x filter/.code={
            \edef\tempa{\thisrow{#1}}
            \edef\tempb{#2}
            \ifx\tempa\tempb
            \else
                
            \fi
        }
    }
}
\pgfplotsset{compat=1.14}
\newcommand{\logLogSlopeTriangle}[5]
{

    \pgfplotsextra
    {
        \pgfkeysgetvalue{/pgfplots/xmin}{\xmin}
        \pgfkeysgetvalue{/pgfplots/xmax}{\xmax}
        \pgfkeysgetvalue{/pgfplots/ymin}{\ymin}
        \pgfkeysgetvalue{/pgfplots/ymax}{\ymax}

        \pgfmathsetmacro{\xArel}{#1}
        \pgfmathsetmacro{\yArel}{#3}
        \pgfmathsetmacro{\xBrel}{#1-#2}
        \pgfmathsetmacro{\yBrel}{\yArel}
        \pgfmathsetmacro{\xCrel}{\xArel}

        \pgfmathsetmacro{\lnxB}{\xmin*(1-(#1-#2))+\xmax*(#1-#2)} 
        \pgfmathsetmacro{\lnxA}{\xmin*(1-#1)+\xmax*#1} 
        \pgfmathsetmacro{\lnyA}{\ymin*(1-#3)+\ymax*#3} 
        \pgfmathsetmacro{\lnyC}{\lnyA-#4*(\lnxA-\lnxB)}
        \pgfmathsetmacro{\yCrel}{\lnyC-\ymin)/(\ymax-\ymin)} 

        \coordinate (A) at (rel axis cs:\xArel,\yArel);
        \coordinate (B) at (rel axis cs:\xBrel,\yBrel);
        \coordinate (C) at (rel axis cs:\xCrel,\yCrel);

        \draw[#5]   (A)-- node[pos=0.5,anchor=north] {}
                    (B)-- 
                    (C)-- node[pos=0.5,anchor=west] {#4}
                    cycle;
    }
}
\begin{document}
\title{A space--time quasi-Trefftz DG method\\ for the wave equation with piecewise-smooth coefficients}
\author{Lise-Marie Imbert-G\'erard\footnote{Department of Mathematics, University of Arizona, USA (lmig@math.arizona.edu)}, 
Andrea Moiola\footnote{Department of Mathematics, University of Pavia, Italy (andrea.moiola@unipv.it)}, 
Paul Stocker\footnote{Institute for Numerical and Applied Mathematics, University of G\"ottingen, Germany
(p.stocker@math.uni-goettingen.de)\!
}}
\maketitle
\begin{abstract}
Trefftz methods are high-order Galerkin schemes in which all discrete functions are elementwise solution of the PDE to be approximated.
They are viable only when the PDE is linear and its coefficients are piecewise-constant.
We introduce a ``quasi-Trefftz'' discontinuous Galerkin method for the discretisation of the acoustic wave equation with piecewise-smooth material parameters: the discrete functions are elementwise approximate PDE solutions.
We show that the new discretisation enjoys the same excellent approximation properties as the classical Trefftz one, and prove stability and high-order convergence of the DG scheme.
We introduce polynomial basis functions for the new discrete spaces and describe a simple algorithm to compute them.
The technique we propose is inspired by the generalised plane waves previously developed for time-harmonic problems with variable coefficients; it turns out that in the case of the time-domain wave equation under consideration the quasi-Trefftz approach allows for polynomial basis functions.

\medskip
\textbf{2020 Mathematics Subject Classification}: 65M60, 65M15,	35L05, 41A10, 41A25

\medskip
\textbf{Key words}: discontinuous Galerkin method, quasi-Trefftz method, space--time discretisation, wave equation, piecewise-smooth coefficients, a priori error analysis, error bounds, $h$-convergence
\end{abstract}

\section{Introduction}

The {\em Trefftz methods} are a class of Galerkin schemes for the approximation of linear partial differential equations.
Their distinctive property is that the restrictions to mesh elements of all test and trial functions are particular solutions of the underlying PDE.
The variational formulation weakly enforces interelement continuity and initial/boundary conditions.
They are named after the seminal work of Erich Trefftz \cite{Trefftz1926}.
The main advantage of Trefftz schemes over more classical ones is the higher accuracy for comparable numbers of degrees of freedom.

Trefftz methods have proved particularly successful for wave propagation in time-harmonic regime; see e.g.\ \cite{TrefftzSurvey} for a survey of the scalar case.
Trefftz methods are often formulated in a {\em discontinuous Galerkin} (DG) framework.
DG methods are a popular choice for time-domain wave propagation, due to their flexibility, efficiency and simplicity; see e.g.\
\cite{AMMQ16,Jo93,MoRi05,MuScSc18,WienersxtDGWave2019}.

Trefftz discretisations of time-dependent PDEs are intrinsically {\em space--time} methods (as opposed to space semi-discretisations and time-stepping): for the test and trial functions to be solution of the PDE they need to be functions of both space and time variables.
Trefftz DG schemes developed for time-domain (acoustic, electromagnetic and elastic) wave problems include
interior penalty (IP-DG) \cite{BGL2016}, hybrid DG (involving Lagrange multipliers on mesh interfaces) \cite{PFT09,WTF14}, and versions related to the ``ultra-weak variational formulation'' \cite{SpaceTimeTDG,MoPe18,BCDS20,KretzschmarPhD,KSTW2014,EKSW15,StockerSchoeberl}.
In all cases, a sensible choice of the DG numerical fluxes allows to write space--time Trefftz DG schemes as simply as standard ``DG-in-space+time-stepping'' schemes.
In particular, there is no need to solve huge global space--time linear systems but implicit (and, on suitable meshes, even explicit, \cite{StockerSchoeberl}) time-stepping is possible.
Numerical experiments on a wide range of academic test cases have shown excellent properties in terms of approximation and convergence rates 
\cite{BGL2016,PFT09,WTF14,SpaceTimeTDG,BCDS20,KretzschmarPhD,KSTW2014,EKSW15,StockerSchoeberl},
dissipation \cite{BGL2016,KretzschmarPhD,StockerSchoeberl,EKSW15}, dispersion \cite{KretzschmarPhD,EKSW15}, conditioning \cite{SpaceTimeTDG}, and even parallelism \cite{StockerSchoeberl}.

Since a sufficiently rich family of local exact solutions of the PDE is needed, almost always, {\em Trefftz schemes require PDE coefficients to be elementwise constant}.
However, many relevant wave propagation problems take place in a smoothly varying medium: classical examples are well-known in aeroacoustics, underwater acoustics, plasma physics, biomedical imaging, etc.
Approximation of smooth coefficients by piecewise-constant ones is not a viable strategy because it immediately spoils high-order convergence, which is one of the strongest reasons to opt for a Trefftz approach.
In the case of time-harmonic acoustic wave propagation (Helmholtz equation) Trefftz methods were adapted to smoothly varying coefficients with the introduction of {\em generalised plane waves} (GPWs) in \cite{ImbertGeradDespres2014}.
GPWs are not exact PDE solutions but rather ``solutions up to a given order'', in the sense of Taylor polynomials.
GPWs extend the accuracy property of Trefftz schemes to a much wider setting (provably for $h$-convergence \cite{IG15}, so far only numerically for $p$-convergence).
The critical point to construct GPWs relies on the choice of an ansatz, mimicking the oscillatory behaviour of plane waves, while allowing for more degrees of freedom.
However this is, in a sense, due more to the nature of the Helmholtz equation than to the GPW idea in itself.

In the present paper, inspired by the GPW idea, we propose an {\em extension of the space--time Trefftz DG scheme for the acoustic wave equation of \cite{MoPe18} to the case of smoothly-varying material parameters}.
Since the Galerkin basis functions are solution of the PDE up to a given order (with respect to the mesh size), the scheme is referred to as {\em quasi-Trefftz DG} method.
A surprising outcome is that test and trial basis functions can be taken as polynomials, and their coefficients can be computed with a simple iteration, which is initialised by assigning their values at a given time.
Their computation uses the first Taylor-expansion terms of the
material coefficient functions $\rho(\bx),G(\bx)$.
Here, $\rho$ and $G$ are two piecewise-smooth scalar coefficients are associated to the space and the time differential operators (or to the vector and the scalar equations in the first-order system), respectively, see \eqref{eq:IBVP}--\eqref{eq:IBVP_U}.
The problem wavespeed is $c=(\rho G)^{-1/2}$.
The Trefftz-DG method for the slightly simpler case where $\rho=1$ and the only variable coefficient is $G=c^{-2}$ is introduced and analysed in the first arXiv version of this manuscript.

The definition, the algorithm for the basis construction, and the analysis of the polynomial quasi-Trefftz discrete space properties are the main novel contributions of the present paper, and will be detailed in the next section.

\subsection{Outline of the paper and main contributions}

We give several preliminary definitions and notation concerning
the initial boundary value problems to be discretised in \S\ref{s:IBVP},
the space--time meshes in \S\ref{s:Mesh},
the numerical parameters needed for the definition of the DG scheme in \S\ref{s:Fluxes},
the mesh-dependent norms used in the error analysis in \S\ref{s:Norms},
along with 
multi-index notation, Taylor polynomials, wave operators and anisotropic weighted norms in \S\ref{s:DefNot}.

The variational formulation of the quasi-Trefftz DG method is introduced in \S\ref{s:DG}.
Remark~\ref{rem:DiffMoPe18} compares this formulation with some closely related ones appeared in \cite{MoPe18,BMPS20,MoRi05,StockerSchoeberl}.
Well-posedness, stability and quasi-optimality of the quasi-Trefftz DG scheme are described in \S\ref{s:WellP}, heavily relying on \cite{MoPe18}. This section also briefly lists several related results such as sharper bounds under more restrictive assumptions, energy dissipation, as well as error bounds on interfaces and partial cylinders.

The polynomial, local, discrete, quasi-Trefftz space $\QU^p(K)$ for the (smooth-wavenumber) second-order wave equation is defined in \S\ref{s:QUlocal}, $p$ standing for the polynomial degree of the basis functions on a mesh element $K$.
Proposition~\ref{prop:Approx} shows that for an appropriate choice of $p$ all wave equation solutions are approximated by this space with high orders in the (space--time) element size.
This is a fully-explicit, high-order, $h$-convergence result; on the other hand $p$-convergence results (i.e.\ regarding convergence for increasing polynomial degrees) on general elements are not available, neither for the Trefftz DG for the constant-coefficients wave equation \cite{MoPe18}, nor for the GPW-based DG scheme for the Helmholtz equation \cite{IG15}.
These best-approximation estimates lead to convergence bounds for the quasi-Trefftz DG scheme in \S\ref{s:QWglobal}.
In particular, for a suitable choice of the numerical parameters entering the DG formulation, we obtain the same orders of convergence as in the constant-coefficient case \cite[\S6]{MoPe18}, even if we require stronger solution regularity (see Remark~\ref{rem:CvsH}).

For the method to be practical, of course one needs to be able to explicitly compute the basis functions: we describe a family of bases in \S\ref{s:Basis}.
Given any basis of the classical polynomial space in $n$ real variables, Algorithms~\ref{algo:Basis1D} and \ref{algo:BasisND} give a simple recipe for the computation of a corresponding quasi-Trefftz basis (in $n=1$ and $n>1$ space dimensions, respectively).

Sections \ref{s:QUlocal}--\ref{s:Basis} focus on quasi-Trefftz schemes for solutions of the second-order wave equation.
However the DG scheme~\eqref{eq:DG} applies more generally to the first-order acoustic wave equation.
Thus \S\ref{s:fobasis} briefly describes another quasi-Trefftz discrete space, suited for the acoustic first-order system, together with the recipe for the computation of its basis.
In the constant-coefficient case the two classes of discrete spaces were proposed and analysed in \cite[\S6.1--6.2]{MoPe18}.

In Section \ref{sec:num} we illustrate the results of several numerical experiments for the implementation of the quasi-Trefftz DG method in NGSolve\footnote{The code is available online at \url{https://github.com/PaulSt/NGSTrefftz}}.
In particular, we briefly discuss the dependence on the penalty parameters and the orders of convergence, we compare the proposed method against standard polynomial and Trefftz DG schemes, and consider both prismatic and ``tent-pitched'' meshes, corresponding to implicit and semi-explicit time-stepping respectively.

\subsection{Extensions and future work}

We envisage that the quasi-Trefftz method proposed here can be extended to elastic and electromagnetic wave propagation in heterogeneous materials, and more generally to a wider class of hyperbolic or Friedrichs systems.
(For constant-coefficient examples of space--time Trefftz DG schemes for elastodynamics, electromagnetics and kinetic equations/transport models see \cite{BCDS20}, \cite{EKSW15,KretzschmarPhD} and \cite{BDM20}, respectively.)
The proposed approach might be effective also for the approximation of PDEs whose nature changes in the computational domain, exemplified by the Euler--Tricomi equation 
($\partial_{x}^2u+x\partial_{y}^2u=0$), 
used for applications in transonic flows and plasma physics.

The numerical analysis performed here is only a first step towards the establishment of a more comprehensive theory of quasi-Trefftz polynomial schemes.
More work is needed to address refined approximation estimates in Sobolev norms (see Remark~\ref{rem:CvsH}), the treatment of less regular solutions (e.g.\ with corner singularities), the proof of error bounds in mesh-independent norms, the analysis of dispersion and dissipation properties.
A very significant and challenging extension is the analysis of the approximation properties for increasing polynomial degrees, i.e.\ the $p$-convergence.
The construction of non-polynomial quasi-Trefftz spaces could be relevant, for example, in order to efficiently approximate solutions that are localised in frequency.

\section{Model problem}\label{s:IBVP}
We consider the following initial boundary value problem (IBVP) for the first-order acoustic wave equation:
\begin{align}\label{eq:IBVP}
\left\{\begin{aligned}
&\nabla v+\rho\partial_t\bsigma = \bzero &&\iin Q,\\
&\nabla\cdot\bsigma+G\partial_t v = 0 &&\iin Q,\\
&v(\cdot,0)=v_0, \quad \bsigma(\cdot,0)=\bsigma_0 &&\oon \Omega,\\
&v=g_D &&\oon \GD\times [0,T],\\
&\bsigma\cdot\bn_\Omega^x=g_N &&\oon \GN\times [0,T],\\
&\tht  v-\bsigma\cdot \bn_\Omega^x=g_R &&\oon \GR\times [0,T].
\end{aligned}
\right.
\end{align}
Here
\begin{align*}
n\in \IN&\text{ is the physical space dimension,}\\
\Omega\subset\IR^n & \text{ is an open, bounded, Lipschitz polytope,}\\
T>0 & \text{ is the final time,}\\
Q=\Omega\times(0,T)&\text{ is the space--time cylinder,}\\
\vs:Q\to\IR\times\IR^n &\text{ are the unknown fields (e.g.\ acoustic pressure and velocity),}\\
\bn^x_\Omega\in\IR^n&\text{ is the outward pointing normal unit vector on }\deO,\\
\GD,\GN,\GR&\text{ are a partition of $\deO$, one or two of them may be empty,}\\
v_0\in \LtO,\bsigma_0\in\LtO^2&\text{ are the initial conditions,}\\
g_D,g_N,g_R&\text{ are Dirichlet, Neumann and Robin boundary data, respectively,}\\
0<\tht\in L^\infty(\GR\times[0,T])&\text{ is a impedance parameter with the \rv{same physical units} of $(G/\rho)^{1/2}$,}\\
\nabla,\nabla\cdot&\text{ are the gradient and divergence operators in the space variable $\bx$ only,}\\
\partial_t&\text{ is the time derivative,}\\
0<\rho,G\in L^\infty\OO  &\text{ are the material coefficients, independent of time and piecewise-smooth,}\\
c:=(\rho G)^{-1/2}
&\text{ is the wavespeed.}
\end{align*}
\rv{When considering $\rho,G,c$ over $Q$, we extend them constant in time.}
With the same notation and assumptions, the corresponding IBVP for the second-order (scalar) wave equations reads as
\begin{align}\label{eq:IBVP_U}
\left\{\begin{aligned}
&-\nabla\cdot\Big(\frac1\rho \nabla u\Big)+ G\partial_t^2u=0&&\iin Q,\\
& u(\cdot,0)=u_0, \quad \partial_t u (\cdot,0)=u_1 &&\oon \Omega,\\
&\partial_t u=g_D &&\oon \GD\times [0,T],\\
&-\frac1\rho\bn_\Omega^x\cdot\nabla u=g_N &&\oon \GN\times [0,T],\\
&\tht \partial_t u+\frac1\rho\bn_\Omega^x\cdot\nabla u=g_R  &&\oon \GR\times [0,T].
\end{aligned}
\right.
\end{align}
If $\rho\bsigma_0$ is a gradient, the two IBVPs are seen to be equivalent by relating the unknowns as $v=\partial_t u$, $\bsigma=-\frac1\rho\nabla u$, and the initial conditions as $v_0=u_1$ and $\bsigma_0=-\frac1\rho\nabla u_0$.

If the model under consideration is meant to describe acoustic waves in a fluid, then $u,\rho,(\rho G)^{-1}$ represent acoustic pressure, mass density and the partial derivative of the pressure with respect to the density at constant entropy, respectively.
If the model describes TE (or TM) modes in electromagnetics, $\rho$ and $G$ represent electric permittivity $\epsilon$ and magnetic permeability $\mu$ (or viceversa), and $u$ a component of the magnetic (or electric) field.

The well-posedness of IBVP \eqref{eq:IBVP} with $\rho=1$ in Bochner spaces is briefly discussed in \cite[\S2.2]{BMPS20}.
In two space dimensions $d=2$, the regularity of the solution in corner-weighted Sobolev space of Kondrat'ev type is investigated in detail in \cite{KokPlam2004,LuTu15} and used in the convergence analysis of DG schemes in, e.g., \cite{MuSc15,MullerPhD,BMPS20}.

\section{Discontinuous Galerkin discretisation}

In this section we closely follow \cite[\S3--5]{MoPe18}; we extend assumptions, notation, definitions and results to the case of piecewise-smooth $\rho,G$ and non-Trefftz discretisations.

\subsection{Mesh assumptions and notation}\label{s:Mesh}

The space--time domain $Q$ is subdivided in a non-overlapping mesh $\calT_h$, where every element $K\in\calT_h$ is an $n+1$-dimensional Lipschitz polytope.
We assume that $\rho|_K,G|_K\in C^\infty(K)$ for all $K\in\calT_h$, 
and that each face ($F=\conj{K_1}\cap\conj{K_2}$ \rv{or $F=\conj{K_1}\cap\partial Q$} with positive $n$-dimensional measure for some $K_1,K_2\in\calT_h$) is an $n$-dimensional polytope.
We denote by $(\bn^x_F,n^t_F)\in\IR^{n+1}$ the unit normal vector orthogonal to a mesh face $F$, with $n^t_F\ge0$ and $|\bn^x_F|^2+(n^t_F)^2=1$.
Recalling that $c=(\rho G)^{-1/2}$, we assume that each face $F$ is either
\begin{align}\label{eq:HorVerFaces}
\text{space-like, i.e. }&\rv{n^t_F>0 \text{ and }}|\bn^x_F|\sup_{(\bx,t)\in F} c(\bx)\le n^t_F, \\
\text{or time-like, i.e. }&n^t_F=0. \nonumber
\end{align}
\rv{The traces of $c$ from both sides of space-like faces coincide because $c=c(\bx)$ is independent of time: 
all discontinuities of $c$ are 
captured by the time-like faces of the mesh.
}
A space-like face $ F$ lies below (i.e.\ in the past of) the cone of dependance of each of its points; its slope (when seen as the graph the function $\bx\mapsto t$ such that $(\bx,t)\in F$) is bounded by $1/c(\bx)$. A time-like face is a union of segments parallel to the time axis.
The class of meshes includes both Cartesian-product meshes such as those of \cite{BMPS20} ($\bn^x_F=0$, $n^t_F=1$ on all space-like faces) and tent-pitched meshes such as those of \cite{StockerSchoeberl,MoRi05} (all faces are space-like, $n^t_F\approx \frac c{\sqrt{1+c^2}}$); see two examples plotted in Figure~\ref{fig:mesh}.

We choose a ``centre point'' $(\bx_K,t_K)\in K$ for each mesh element $K\in\calT_h$, for example the barycentre, which will be used in the proof of the approximation estimates and to define the basis functions.
We define a radius and a ``weighted radius'' of each element as
\begin{equation}\label{eq:rK}
r_K:=\sup_{(\bx,t)\in K}\abs{(\bx,t)-(\bx_K,t_K)},
\qquad
r_{K,c}:=\sup_{(\bx,t)\in K}\abs{\big(\bx,c(\bx)t\big)-\big(\bx_K,c(\bx_K)t_K\big)},
\end{equation}
with $\abs{\cdot}$ the Euclidean distance in $\IR^{n+1}$.

We use the following notation for the mesh skeleton and its parts:
\begin{align*}
\Fh&:=\bigcup\nolimits_{K\in\calT_h}\partial K,\qquad
\Fspa:= \bigcup\{F\subset  \mathcal F_h \text{ space-like face} \},\qquad
\Ftime:= \bigcup\{F\subset  \mathcal F_h \text{ time-like face} \},\\
\FO&:=\Omega\times\{0\},\hspace{12mm} \FT:=\Omega\times\{T\},\\ 
\FD&:=\GD\times[0,T],\qquad
\FN:=\GN\times[0,T],\qquad
\FR:=\GR\times[0,T]. 
\end{align*}
We use standard DG notation for averages $\mvl\cdot$, space normal jumps $\jmp{\cdot}_\bN$ and time (full) jumps $\jmp{\cdot}_t$ of piecewise-continuous scalar and vector fields on internal faces:
on $F=\deK_1\cap\deK_2$, for $K_1,K_2\in\calT_h$,
\begin{align*}
\mvl{w}&:=\frac{w_{|_{K_1}}+w_{|_{K_2}}}2,\qquad
&\mvl{\btau}&:=\frac{\btau_{|_{K_1}}+\btau_{|_{K_2}}}2, \\
\jmp{w}_\bN&:= w_{|_{K_1}}\bn_{K_1}^x+w_{|_{K_2}} \bn_{K_2}^x,\qquad
&\jmp{\btau}_\bN&:= \btau_{|_{K_1}}\cdot\bn_{K_1}^x+\btau_{|_{K_2}} \cdot\bn_{K_2}^x,\\
\jmp{w}_t&:= w_{|_{K_1}}n_{K_1}^t+w_{|_{K_2}} n_{K_2}^t=(w^--w^+)n^t_F,\qquad
&\jmp{\btau}_t&:= \btau_{|_{K_1}} n_{K_1}^t+\btau_{|_{K_2}} n_{K_2}^t= (\btau^--\btau^+)n^t_F.
\end{align*}
The superscripts $+/-$ denote the traces on a space-like face taken from the mesh elements placed ``after'' ($+$) and ``before'' ($-$) the face itself, according to the time direction.
We introduce a piecewise-constant function $\gamma$ defined on $\Fspa\cup\FO\cup\FT$, measuring how close to characteristic cones the space-like mesh faces are:
\begin{align}\label{eq:gamma}
\gamma:= \frac{\N{c}_{C^0(F)}|\bn^x_F|}{n^t_F}\;\oon F
\subset\Fspa, \qquad \gamma:=0\;\oon \FO\cup\FT.
\end{align}
We define a ``space-like interface'' as a connected union of space-like faces $\Sigma\subset \Fspa\cup\FO\cup\FT$ that is the graph of a Lipschitz-continuous function $f_\Sigma:\conj \Omega\to [0,T]$. By \eqref{eq:HorVerFaces}, the Lipschitz constant of $f_\Sigma$ in $\bx\in\Omega$ will be at most $c^{-1}(\bx)$. 
The unit normal vector on $\Sigma$ is denoted $(\bn^x_\Sigma,n^t_\Sigma)$.

\subsection{DG flux and penalisation parameters}\label{s:Fluxes}
We fix three ``numerical flux parameter'' functions on portions of the mesh skeleton, and two ``volume penalisation coefficients'':
$$
\alpha\in L^\infty\left(\Ftime\cup\FD\right), \qquad
\beta\in L^\infty\left(\Ftime\cup\FN\right),\qquad
\delta\in L^\infty\left(\FR\right),\qquad
\mu_1,\mu_2\in L^\infty(Q).
$$
We assume that all these are uniformly positive and bounded:
$$
\alpha,\beta,\delta,\mu_1,\mu_2>0, 
\qquad \N{\delta}_{L^\infty(\FR)}<1,$$
$$\N{\alpha^{-1}}_{L^\infty(\Ftime\cup\FD)},
\N{\beta^{-1}}_{L^\infty(\Ftime\cup\FN)},
\N{\delta^{-1}}_{L^\infty(\FR)},
\N{\mu_1^{-1}}_{L^\infty(Q)}, 
\N{\mu_2^{-1}}_{L^\infty(Q)}<\infty.
$$
We also define the values
\begin{equation}\label{eq:mupm}
\mu_{K+}:=\max\big\{\N{\mu_1}_{L^\infty(K)},\N{\mu_2}_{L^\infty(K)}\big\},\quad
\mu_{K-}:=\max\big\{\N{\mu_1^{-1}}_{L^\infty(K)},\N{\mu_2^{-1}}_{L^\infty(K)}\big\},\quad \forall K\in\calT_h.
\end{equation}
{To obtain optimal convergence rates, the volume penalty parameters $\mu_1$ and $\mu_2$ need to be scaled proportionally to the local element size $r_{K,c}$: we assume this in \eqref{eq:muChoice}.
    For dimensional consistency, $\alpha$ and $\beta^{-1}$ must have the same \rv{physical units as} $Gc$ and of the impedance parameter $\vartheta$: we assume this in \eqref{eq:FluxChoice}.
}

\subsection{DG formulation}\label{s:DG}

Let $\bVp\Th$ be a {closed (e.g.\ finite-dimensional)} subspace of the broken Sobolev space
$$
\bH\Th:=\prod_{K\in\calT_h}
\big(H^1(K)\times H^1(K)^n \big).
$$
We consider the following variational formulation:
\begin{align}
\text{Seek}&\quad (\Vhp,\Shp)\in \bVp (\calT_h)\nonumber\\
\text{such that}&\quad
\calA(\Vhp,\Shp; w ,\btau )=\ell( w ,\btau )\quad 
\forall ( w ,\btau )\in \bVp (\calT_h), \label{eq:DG}\\
\text{where}&\nonumber\\\nonumber
\calA(\Vhp,&\Shp; w ,\btau ):=
-\sum_{K\in\calT_h} \int_K\bigg(\Vhp\Big(\nabla\cdot\btau+ G\partial_t w \Big) +\Shp\cdot\Big(\rho\partial_t\btau +\nabla w  \Big)\bigg)\di V\\
+&\int_{\Fspa}\big( G\Vhp^-\jmp{w}_t+\rho\Shp^-\cdot\jmp{\btau}_t+\Vhp^-\jmp{\btau}_\bN+\Shp^-\cdot\jmp{w}_\bN\big)\di S
+\int_\FT ( G\Vhp  w +\rho\Shp \cdot\btau )\di \bx
\nonumber\\
+&\int_\Ftime\!\! \big( \mvl{\Vhp}\jmp{\btau }_\bN+\mvl{\Shp}\cdot\jmp{ w }_\bN
+\alpha\jmp{\Vhp}_\bN\cdot\jmp{ w }_\bN+ \beta\jmp{\Shp}_\bN\jmp{\btau }_\bN
\big)\di S
\nonumber\\
+&\int_\FD \big(\Shp\cdot\bn_\Omega^x\, w +\alpha \Vhp w   \big) \di S+\int_\FN\big(\Vhp(\btau\cdot\bn_\Omega^x)
+\beta(\Shp\cdot\bn_\Omega^x)(\btau\cdot\bn_\Omega^x)\big)\di S
\nonumber\\
+&\int_\FR\Big((1-\delta)\tht \Vhp w+(1-\delta)v\hp(\btau\cdot\bn_\Omega^x)
+\delta(\Shp\cdot\bn_\Omega^x) w+\frac{\delta}\tht(\Shp\cdot\bn_\Omega^x) (\btau\cdot\bn_\Omega^x)\Big)\di S\nonumber\\
+&\sum_{K\in\calT_h} \!\int_K \!\Big(\mu_1 \frac1G\big( \nabla\cdot\Shp+ G\partial_t \Vhp\big)
\big( \nabla\cdot\btau+ G\partial_t w\big) 
+ \mu_2\frac1\rho\big(\nabla\Vhp+\rho\partial_t\Shp\big)\cdot\big(\nabla w+\rho\partial_t\btau\big)\Big) \di V,
\nonumber\\
\ell( w ,\btau )&:=
\int_\FO (  G v_0 w  +\rho\bsigma_0\cdot \btau )\di \bx
\nonumber\\
+&\int_\FD g_D\big(\alpha  w -\btau\cdot\bn_\Omega^x\big)\di S
+\int_\FN g_N \big(\beta\,\btau\cdot\bn_\Omega^x-w\big)\di S
+\int_\FR g_R \Big((1-\delta)w-\frac{\delta}\tht\,\btau\cdot\bn_\Omega^x\Big)\di S.
\nonumber
\end{align}
Noting that all terms involving $\rho$ and $G$ are integrated by parts with respect to the time variable only, the derivation in \cite[\S4]{MoPe18} shows that the formulation \eqref{eq:DG} is consistent: 
\begin{align}\nonumber
&{\text{let } \vs \text{ with }
v\in H^1(Q),\;
\bsigma \in H^1\big(0,T;L^2\OO^n\big)\cap L^2\big(0,T;H(\mathrm{div},\Omega)\big),\;
\bsigma|_K\in H^1(K)^n \;\forall K\in\calT_h,
}\\
&\text{{be} solution of \eqref{eq:IBVP}, then } \quad
\calA(v,\bsigma;w,\btau)=\ell\wt \quad \forall\wt\in \bH\Th.
\label{eq:VSregularity}
\end{align}

\begin{rem}\label{rem:DiffMoPe18}
The differences between \eqref{eq:DG} and \cite[equation~(7)]{MoPe18} are the following: 
(i) we allow position-dependent and possibly discontinuous material parameters $\rho$ and $G$ (\cite[equation~(7)]{MoPe18} is written in terms of $c$ only);
(ii) we allow fields that are not local solution of the PDE system (i.e.\ our method is not Trefftz);
(iii) as a consequence we have a volume term in $\calA(\cdot,\cdot)$, ensuring consistency;
(iv) we have a further stabilisation/penalisation volume term (the term involving $\mu_1,\mu_2$).
This term can be understood as a Galerkin--least squares (GLS) correction.
The formulation \eqref{eq:DG} exactly corresponds to the numerical fluxes $\hat v_{hp},\hat \bsigma_{hp}$ in \cite[pp.~396--397]{MoPe18}, except for those on $\FR$ where we have absorbed in $\tht$ the dependence on the material parameters.

The formulation of \cite{MoPe18} has been studied also in \cite{StockerSchoeberl} and extended to the non-Trefftz case, with piecewise-constant coefficient, in \cite{BMPS20} (with tensor-product and sparse polynomial bases).
With appropriate choices of the numerical flux parameters the present formulation is a special case of that in \cite{MoRi05} (see the comparison in \cite[Rem.~4]{MoPe18}).
\end{rem}

Although the variational problem \eqref{eq:DG} couples the discrete solution on all mesh elements in $Q$, the structure of the terms on $\Fspa$ (the space-like part of the mesh skeleton) allows to compute the solution $\vsh$ by solving a sequence of smaller linear systems; see \cite[p.~398]{MoPe18}.
E.g., if the elements of a quasi-uniform mesh can be grouped in $N$ ``time-slabs'' $\Omega\times(t_{i-1},t_i)$, with $0=t_0<t_1<\cdots<t_N=T$, $t_i-t_{i-1}\approx T/N$, then the discrete solution on each time-slab can be computed from the solution on the previous time-slab, solving $N$ linear systems of size $O(\dim\bV\hp\Th/N)$ each.
This is equivalent to an implicit time-stepping.

As in classical Trefftz methods, we only consider homogeneous IBVPs (i.e.\ with zero volume source term), which are frequently encountered in wave problems.
To treat non-homogeneous problems ($\nabla v+\rho\partial_t \bsigma=\mathbf\Phi$, $\nabla\cdot\bsigma+G\partial_t v=\psi$) one can proceed in two steps: (i) construct (possibly in parallel) elementwise solutions $(v_K^\#,\bsigma_K^\#)$ of the source problem with artificial local boundary conditions, and (ii) solve for the difference $(v-v_K^\#,\bsigma-\bsigma_K^\#)$ using the quasi-Trefftz DG scheme~\eqref{eq:DG}.
This technique has been developed and analysed for constant-coefficient time-harmonic problems in, e.g., \cite{HuYuan18}.

\subsection{Mesh-dependent norms}\label{s:Norms}

We define two mesh- and flux-dependent norms on $\bH\Th$:
\begin{align}\label{eq:DGnorm}
\Tnorm{(w ,\btau )}^2\DG&:=
\frac12 \Norm{\Big(\frac{1-\gamma}{n^t_F}\Big)^{1/2}G^{1/2}\jmp{w }_t}_{L^2(\Fspa)}^2
+\frac12\Norm{\Big(\frac{1-\gamma}{n^t_F}\Big)^{1/2}\rho^{1/2}\jmp{\btau }_t\rule{0pt}{3mm}}_{L^2(\Fspa)^n}^2 \\\nonumber
&\quad
+\frac12\Norm{G^{1/2}w }^2_{L^2(\FO\cup\FT)} 
+\frac12\Norm{\rho^{1/2}\btau}^2_{L^2(\FO\cup\FT)^n} 
\\&\quad\nonumber
+\Norm{\alpha^{1/2}\jmp{w }_\bN}_{L^2(\Ftime)^n}^2
+\Norm{\beta^{1/2}\jmp{\btau }_\bN}_{L^2(\Ftime)}^2\\\nonumber
&\quad+\Norm{\alpha^{1/2}w }_{L^2(\FD)}^2
+\Norm{\beta^{1/2} \btau\cdot\bn_\Omega^x }_{L^2(\FN)}^2\\\nonumber
&\quad+\Norm{\big((1-\delta)\tht\big)^{1/2} w }_{L^2(\FR)}^2
+\Norm{\Big(\frac{\delta}\tht\Big)^{1/2} \btau\cdot\bn_\Omega^x }_{L^2(\FR)}^2 \\
&\quad+\sum_{K\in\calT_h}\left(\Norm{\mu_1^{1/2}G^{-1/2}(\nabla\cdot\btau+ G\partial_t w) }^2_{L^2(K)} 
+\Norm{\mu_2^{1/2}\rho^{-1/2}(\nabla w+\rho\partial_t\btau)}^2_{L^2(K)^n}\right);
\nonumber\\
\nonumber
\Tnorm{(w ,\btau )}^2\DGp&:=
\Tnorm{(w ,\btau )}^2\DG
+2\Norm{\Big(\frac{n^t_F}{1-\gamma}\Big)^{1/2}G^{1/2} w^-}_{L^2(\Fspa)}^2
+2\Norm{\Big(\frac{n^t_F}{1-\gamma}\Big)^{1/2}\rho^{1/2}\btau^-}_{L^2(\Fspa)^n}^2  
\\&\quad\nonumber
+\Norm{\beta^{-1/2}\mvl{w }}_{L^2(\Ftime)}^2
+\Norm{\alpha^{-1/2}\mvl{\btau }}_{L^2(\Ftime)^n}^2
\\&\quad\nonumber
+\Norm{\alpha^{-1/2}\btau\cdot\bn_\Omega^x }_{L^2(\FD)}^2
+\Norm{\beta^{-1/2} w }_{L^2(\FN)}^2
\\&\quad\nonumber
+\sum_{K\in\calT_h}\left(\Norm{\mu_1^{-1/2}G^{1/2}w}^2_{L^2(K)}
+\Norm{\mu_2^{-1/2}\rho^{1/2}\btau}^2_{L^2(K)}\right).
\end{align}
These are \textit{norms} on the broken Sobolev space $\bH\Th$ defined on the mesh $\calT_h$. 
Indeed, $\Tnorm{(w,\btau)}\DG=0$ for $(w,\btau)\in\bH(\calT_h)$ implies that $(w,\btau)$ is solution of the IBVP with zero initial and boundary conditions, so $(w,\btau)=(0,\bzero)$ by the well-posedness of the IBVP itself; see \cite[Lemma~4.1]{SpaceTimeTDG}.

As in \cite[\S5.3]{MoPe18}, we define the energy of a field $\wt\in\bH\Th$ on a space-like interface $\Sigma$ as
\begin{equation}\label{eq:Energy}
\calE(\Sigma;w,\btau):=
\int_\Sigma \Big(w\btau\cdot\bn^x_\Sigma+\frac12(G w^2+\rho|\btau|^2)n^t_\Sigma\Big)\di S.
\end{equation}

\subsection{Well-posedness, stability, quasi-optimality}\label{s:WellP}

Integration by part on a mesh element gives for any field $\wt\in\bH\Th$
\begin{align}\label{eq:IBP}
&\int_K\bigg(w\Big(\nabla\cdot\btau+ G\partial_t w \Big) +\btau\cdot\Big(\nabla w+\rho\partial_t\btau\Big)\bigg)\di V
- \int_{\deK}
\bigg(w\btau\cdot\bn^x_K+\frac{1}2\Big( G{w^2}+\rho |\btau|^2\Big)n^t_K\bigg)\di S
=0.
\end{align}

The results of \cite[\S5.2]{MoPe18} hold also in the current, slightly extended, setting and are summarised in the following theorem.

\begin{theorem}\label{thm:WP}
The bilinear form $\calA$ is coercive in $\Tnorm{\cdot}\DG$ norm and continuous in 
$\Tnorm{\cdot}\DGp$--$\Tnorm{\cdot}\DG$ norms, and the linear functional $\ell$ is continuous:
\begin{align}\label{eq:Coercivity}
&\calA(w,\btau; w ,\btau )\ge
\Tnorm{(w,\btau)}\DG^2,
\\
\nonumber
&\abs{\calA(v,\bsigma; w ,\btau )}\le C_{\!\calA}\Tnorm{(v,\bsigma)}\DGp\Tnorm{(w,\btau)}\DG,
\quad \text{where }\\&
\hspace{20mm} C_{\!\calA}:=
\begin{cases}
2, &\text{if }\FR=\emptyset,\\
2\max\Big\{\Norm{\frac{1-\delta}\delta}_{L^\infty(\FR)}^{1/2},
\big\|\frac\delta{1-\delta}\big\|_{L^\infty(\FR)}^{1/2}\Big\}
&\text{if }\FR\ne\emptyset,
\end{cases}
\label{eq:Continuity}
\\
&\nonumber
\abs{\ell(w,\btau)}
\le\Big(2\Norm{G^{1/2}v_0}_{L^2(\FO)}^2+2\Norm{\rho^{1/2}\bsigma_0}_{L^2(\FO)}^2
+2\Norm{\alpha^{1/2}g_D}_{L^2(\FD)}^2
\\&\hspace{20mm}
+2\Norm{\beta^{1/2}g_N}_{L^2(\FN)}^2
+\Norm{\tht^{-1/2}g_R}_{L^2(\FR)}^2
\Big)^{1/2} \Tnorm{(w,\btau)}\DGp
\rv{,\qquad\forall \wt,\vs\in\bH\Th}
.
\nonumber
\end{align}
The variational problem \eqref{eq:DG} admits a unique solution $(v\hp,\bsigma\hp)\in \bVp\Th$, for any choice of $\bVp\Th$.
\rv{If the solution $\vs$ is as in \eqref{eq:VSregularity}, then t}he discrete solution satisfies the error bound
\begin{align}\label{eq:QuasiOpt}
\mmbox{\Tnorm{(v-\Vhp,\bsigma-\Shp)}\DG\le  (1+C_{\!\calA}) \inf_{(w,\btau)\in\bVp\Th} \Tnorm{(v-w,\bsigma-\btau)}\DGp.}
\end{align}
Moreover, if $g_D=g_N=0$ (or the corresponding parts $\FD,\FN$ of the boundary are empty) then
\begin{align*}
&\Tnorm{(v\hp,\bsigma\hp)}\DG\le
\bigg(2\Norm{G^{1/2}v_0}_{L^2(\FO)}^2+2\Norm{\rho^{1/2}\bsigma_0}_{L^2(\FO)}^2
+\Norm{\tht^{-1/2}g_R}_{L^2(\FR)}^2 \bigg)^{1/2}.
\end{align*}
\end{theorem}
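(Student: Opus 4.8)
The plan is to follow closely the arguments of \cite[\S5.2]{MoPe18}, checking at each step that the generalisation to position-dependent $\rho,G$ and to the non-Trefftz setting (with the extra GLS/penalty volume term in $\mu_1,\mu_2$) does not disrupt the underlying structure. I would establish the five assertions — coercivity \eqref{eq:Coercivity}, continuity \eqref{eq:Continuity}, continuity of $\ell$, well-posedness, and the error bound \eqref{eq:QuasiOpt} together with the stability estimate — in this order.

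For coercivity I would set $\vsh=\wt$ in $\calA$ and rewrite the volume terms in the first line of $\calA$ by the elementwise identity \eqref{eq:IBP}, turning them into the boundary integrals $-\sum_{K}\int_{\deK}\big(w\btau\cdot\bn^x_K+\tfrac12(Gw^2+\rho\abs{\btau}^2)n^t_K\big)\di S$ (whose integrand is exactly the energy flux of \eqref{eq:Energy}). These are then reassembled face by face and added to the explicit skeleton terms of $\calA$. On the time-like faces ($n^t_F=0$) and on $\FD,\FN,\FR,\FT$ the combination is immediate and yields the corresponding penalty squares of $\Tnorm{\cdot}\DG$, while the GLS term reproduces verbatim the last line of \eqref{eq:DGnorm}. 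The delicate point — and the step I expect to be the main obstacle — is the internal space-like faces: the two element-boundary contributions (from the elements below and above $F$, with opposite normals) must be combined with the flux term $G\Vhp^-\jmp{w}_t+\rho\Shp^-\cdot\jmp{\btau}_t+\Vhp^-\jmp{\btau}_\bN+\Shp^-\cdot\jmp{w}_\bN$. Expanding $\jmp{\cdot}_t,\jmp{\cdot}_\bN$ in terms of the one-sided traces $w^\pm,\btau^\pm$ and collecting, one must show that the resulting face integrand is nonnegative and dominates the weighted jump squares $\tfrac12\big((1-\gamma)/n^t_F\big)\big(G\jmp{w}_t^2+\rho\abs{\jmp{\btau}_t}^2\big)$ of \eqref{eq:DGnorm}. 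This is precisely where the space-like mesh condition \eqref{eq:HorVerFaces} enters: the cross terms carry a factor $\abs{\bn^x_F}c$, so $\gamma\le1$ with $\gamma$ as in \eqref{eq:gamma} guarantees they are dominated by the diagonal ones and the whole form is a sum of squares. Since $c=c(\bx)$ is time-independent, its traces from the two sides of a space-like face coincide, so no coefficient jump arises here.

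Continuity \eqref{eq:Continuity} and continuity of $\ell$ then follow by applying Cauchy--Schwarz to each term of $\calA(\vs;\wt)$ and of $\ell\wt$ separately, pairing each factor with the matching norm. The asymmetry between $\Tnorm{\cdot}\DGp$ (first argument) and $\Tnorm{\cdot}\DG$ (second argument) is designed exactly so that, for instance, the volume coupling $\int_K v(\nabla\cdot\btau+G\partial_t w)$ is controlled by $\N{\mu_1^{-1/2}G^{1/2}v}_{L^2(K)}$ from $\Tnorm{\cdot}\DGp$ and by the residual $\N{\mu_1^{1/2}G^{-1/2}(\nabla\cdot\btau+G\partial_t w)}_{L^2(K)}$ from $\Tnorm{\cdot}\DG$, with the $\Fspa,\Ftime,\FD,\FN$ terms treated analogously. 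The constant $C_{\!\calA}$ originates solely from the off-diagonal Robin couplings $(1-\delta)\Vhp(\btau\cdot\bn_\Omega^x)$ and $\delta(\Shp\cdot\bn_\Omega^x)w$ on $\FR$: bounding these through the pairs $\big((1-\delta)\tht\big)^{1/2}w$ and $(\delta/\tht)^{1/2}\btau\cdot\bn_\Omega^x$ produces the factors $\N{(1-\delta)/\delta}_{L^\infty(\FR)}^{1/2}$ and $\N{\delta/(1-\delta)}_{L^\infty(\FR)}^{1/2}$.

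Well-posedness is then immediate: since $\Tnorm{\cdot}\DG$ is a norm on $\bH\Th$, coercivity forces the finite-dimensional system \eqref{eq:DG} to have trivial kernel, hence a unique solution. For the quasi-optimal bound I would use consistency \eqref{eq:VSregularity} to obtain the Galerkin orthogonality $\calA(v-\Vhp,\bsigma-\Shp;\wt)=0$ for all $\wt\in\bVp\Th$; writing the error as $\eta+\xi$ with $\eta=(v-w,\bsigma-\btau)$ and $\xi\in\bVp\Th$ for arbitrary $(w,\btau)\in\bVp\Th$, coercivity and continuity give $\Tnorm{\xi}\DG^2\le\calA(\xi;\xi)=-\calA(\eta;\xi)\le C_{\!\calA}\Tnorm{\eta}\DGp\Tnorm{\xi}\DG$, and the triangle inequality with $\Tnorm{\cdot}\DG\le\Tnorm{\cdot}\DGp$ yields the factor $1+C_{\!\calA}$ after taking the infimum over $(w,\btau)$. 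Finally, for the stability estimate I would test \eqref{eq:DG} with $\wth=\vsh$ and use coercivity to get $\Tnorm{\vsh}\DG^2\le\ell\vsh$; the key observation is that when $g_D=g_N=0$ the surviving data terms of $\ell$ (the initial data on $\FO$ and $g_R$ on $\FR$) pair only with quantities already present in $\Tnorm{\cdot}\DG$, in contrast to the $\FD,\FN$ data which would require the $\DGp$-only traces $\alpha^{-1/2}\btau\cdot\bn_\Omega^x$ and $\beta^{-1/2}w$, so a single bound by $\Tnorm{\vsh}\DG$ closes the estimate.
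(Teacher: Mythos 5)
Your proposal is correct and takes essentially the same route as the paper, which establishes the theorem by invoking \cite[\S5.2]{MoPe18} after observing that the differences (i)--(iv) of Remark~\ref{rem:DiffMoPe18} are harmless---precisely the ingredients you spell out: the elementwise identity \eqref{eq:IBP} to absorb the consistency volume term, the face-by-face coercivity computation on $\Fspa$ controlled by $\gamma\le1$, termwise Cauchy--Schwarz for the continuity bounds, and the Galerkin-orthogonality/C\'ea argument with $\Tnorm{\cdot}\DG\le\Tnorm{\cdot}\DGp$. The only minor inaccuracy is attributing $C_{\!\calA}$ solely to the Robin couplings: the factor $2$, present even when $\FR=\emptyset$, stems from the $\tfrac12$-weights of the $\FO\cup\FT$ and space-like jump terms in the two norms, but this bookkeeping detail does not affect the validity of your argument.
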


Of the differences between the methods in \S\ref{s:DG} and in \cite{MoPe18} listed in Remark~\ref{rem:DiffMoPe18}: 
(i) is unimportant for the proof of Theorem~\ref{thm:WP} as the terms involving $\rho$ and $G$ are integrated by parts in time only;
(ii) does not affect the theorem as the Trefftz property is replaced by the presence of the first volume term in \eqref{eq:DG};
the term described in (iii) is taken care by the identity \eqref{eq:IBP};
the term of (iv) coincides with the new term in the $\Tnorm{\cdot}\DG$ norm.

The error bound \eqref{eq:QuasiOpt} slightly differs from the quasi-optimality result (C\'ea lemma) in classical FEM analysis in that the norm ($\Tnorm{\cdot}\DGp$) at the right-hand side is stronger than that ($\Tnorm{\cdot}\DG$) at the left-hand side.
This mismatch is typical of the DG formulation employed here,
not only for hyperbolic equations (as in \cite{MoPe18,BMPS20,SpaceTimeTDG}) but also for the analogous discretisation of the Helmholtz equation, see \cite[\S2.2.1]{TrefftzSurvey}.

Under more restrictive assumptions, slightly stronger results are possible.
On a mesh made of Carte\-sian-product elements, or more generally if $\bn_F^x=\bzero$ on all faces $F\subset\Fspa$, then the coercivity inequality \eqref{eq:Coercivity} is an equality: $\calA(w,\btau; w ,\btau )=\Tnorm{(w,\btau)}\DG^2$ for all $(w,\btau)\in \bH\Th$.
If $g_D=g_N=0$ (or the corresponding parts $\FD,\FN$ of the boundary are empty) then 
the $\Tnorm{(w,\btau)}\DGp$ norm at the right-hand side of the bound on $|\ell(w,\btau)|$ can be substituted by $\Tnorm{(w,\btau)}\DG$.

The bound \eqref{eq:QuasiOpt} allows to control the DG error only in the $\Tnorm{\cdot}\DG$ norm, which involves jumps on internal faces.
However a simple adaptation of the proof allows to control the $L^2$ norm of the traces on space-like interfaces of the error.
Let $\Sigma$ be a space-like interface, as defined in \S\ref{s:Mesh}.
Assume that $\bVp\Th=\{\wt\in\bVp\Th,\supp\wt\subset\{(\bx,t),0\le t\le f_\Sigma(\bx) \}\}\oplus \{\wt\in\bVp\Th,\supp\wt\subset\{(\bx,t),f_\Sigma(\bx)\le t\le T\}\}$ (i.e.\ that the discrete functions are indeed discontinuous across $\Sigma$).
Then \cite[Prop.~1]{MoPe18} gives that 
\begin{align*}
&\calE(\Sigma; v-\Vhp^-,\bsigma-\Shp^-)
\le \frac52\N{(1-\gamma)^{-1}}_{L^\infty(\Sigma)}(1+C_{\!\calA})^2
\inf_{(w,\btau)\in\bVp\Th} \Tnorm{(v-w,\bsigma-\btau)}^2\DGp.
\end{align*}

If $g_D=g_N=0$ and $\GR=\emptyset$ the IBVP \eqref{eq:IBVP} preserves energy: $\calE(\FT;v,\bsigma)=\calE(\FO;v,\bsigma)$.
The DG scheme dissipates energy: $\calE(\FT;v\hp,\bsigma\hp)\le\calE(\FO;v\hp,\bsigma\hp)$; the dissipation can be quantified in terms of the jumps on the faces and the residual in the mesh elements with the same technique of \cite[\S5.3]{MoPe18} and \cite[Rem.~5.7]{BMPS20}.

For any space-like interface $\Sigma$, the results of Theorem~\ref{thm:WP} can be localised to the partial space--time cylinder $Q_\Sigma=\{(\bx,t),\bx\in\Omega, 0<t<f_\Sigma(\bx)\}$, by proceeding as in \cite[\S5.3]{BMPS20}.

In order to reduce the computational cost of the system assembly, one might consider the case without volume penalisation term, i.e.\ with $\mu_1=\mu_2=0$
(on the other hand, the first volume term in $\calA(\cdot,\cdot)$ is needed for consistency).
Indeed, we observe numerically that, at least in some examples (\S\ref{sec:numcoeff}), this choice does not spoil errors and convergence rates. 
Well-posedness for $\mu_1=\mu_2=0$ is shown in \cite{BMPS20} for full polynomial spaces; however, this technique cannot be directly used to prove the well-posedness of the quasi-Trefftz DG scheme because the crucial condition \cite[(5.4)]{BMPS20} is generally not satisfied by the quasi-Trefftz discrete spaces defined in the next section.

\section{Quasi-Trefftz space}

In this section, we present the first extension of Generalized Plane Waves to a time-dependent problem, focusing on two main aspects: properties of the resulting function spaces, and explicit construction of the basis fucntions.
\subsection{Definitions and notation}\label{s:DefNot}
We use standard multi-index notation for partial derivatives and monomials, adapted for space--time fields:
for $\mi=(\mi_\bx,i_t)=(i_{x_1},\ldots,i_{x_n},i_t)
\in \IN^{n+1}_0$,
$D^\mi f := 
\partial_{x_1}^{i_{x_1}}\cdots\partial_{x_n}^{i_{x_n}}\partial_{t}^{i_t} f$ and
$(\bx,t)^\mi=\bx^{\mi_\bx}t^{i_t}
=x_1^{i_{x_1}}\cdots x_n^{i_{x_n}}t^{i_t}$.
We use the canonical basis of $\IR^n$, namely $\{\be_k\in\IR^n,1\leq k\leq  n, (\be_k)_l=\delta_{kl}\}$.
We recall the Leibniz product rule for multi-indices:
\begin{equation}\label{def:Leibniz}
D^\mi(f\tilde f)
=\sum_{\substack{\mj\in\IN_0^{n+1},\;\mj\le\mi}}
\binom\mi\mj D^\mj fD^{\mi-\mj} \tilde f,
\quad\text{where}\quad
\binom\mi\mj:=\frac{\mi!}{\mj!(\mi-\mj)!}
=\binom{i_{x_1}}{j_{x_1}}\cdots
\binom{i_{x_n}}{j_{x_n}}\binom{i_t}{j_t},
\end{equation}
$\mi!:=i_{x_1}!\cdots i_{x_n}!i_t!$ and $\mj\le\mi$ means that the inequality holds component-wise.
The length of a multi-index is $|\mi|=|\mi_\bx|+i_t:=i_{x_1}+\cdots+i_{x_n}+i_t$.
For any field $f\in C^{m}(K)$, denote the Taylor polynomial of order $m+1$ (and polynomial degree at most~$m$) centered at $(\bx_K,t_K)$ by 
\begin{align*}
T^{m+1}_K[f](\bx,t):=
\sum_{|\mi|\le m}\frac1{\mi!}
(\bx-\bx_K)^{\mi_\bx}(t-t_K)^{i_t}
D^\mi f(\bx_K,t_K).
\end{align*}
It follows that
\begin{equation}
\label{eq:coefsTP}
D^\mi T^{m+1}_K[f](\bx_K,t_K)=
\left\{
\begin{array}{ll}
D^\mi f(\bx_K,t_K) &\text{ if }|\mi|\le m,\\
0 &\text{ if }|\mi|>m.
\end{array}
\right.
\end{equation}
Lagrange's form of the Taylor remainder \cite[Cor.~3.19]{Cal10} is the following: if $f$ has $m+1$ continuous derivatives in a neighbourhood of the segment $S$ with extremes $(\bx_K,t_K)$ and $(\bx,t)$, then
\begin{equation}\label{eq:TaylorRem}
\exists (\bx_*,t_*)\in S \quad\text{such that}\quad
f(\bx,t)-T^{m+1}_K[f](\bx,t)=
\sum_{|\mj|=m+1}\frac1{\mj!}(\bx-\bx_K)^{\mj_\bx}(t-t_K)^{j_t}D^\mj f(\bx_*,t_*).
\end{equation}
For elementwise-smooth, positive, spatial functions $G,\rho:\Omega\to\IR$, we denote the variable-coefficient second-order wave operator
\begin{equation*}
(\square_{\rho,G} f)(\bx,t):= \nabla\cdot\Big(\frac1{\rho(\bx)}\nabla f\Big)(\bx,t)-G(\bx)\partial_t^2 f(\bx,t).
\end{equation*}
We denote the partial derivatives of $\frac1\rho$ and $G$ evaluated at an element centre as
\begin{align}\label{eq:Gg}
&\zeta_{\mi_\bx}:=\frac1{\mi_\bx!}D^{(\mi_\bx,0)}\Big(\frac1{\rho(\bx_K)}\Big),\qquad
g_{\mi_\bx}:=\frac1{\mi_\bx!}D^{(\mi_\bx,0)}G(\bx_K),
\quad \text{so that}\\
&\frac1{\rho(\bx)}=\sum_{\mi_\bx\in \IN_0^n}  (\bx-\bx_K)^{\mi_\bx}\zeta_{\mi_\bx},\qquad
G(\bx)=\sum_{\mi_\bx\in \IN_0^n}  (\bx-\bx_K)^{\mi_\bx}g_{\mi_\bx}.
\nonumber
\end{align}

We underline the value of a particular partial derivative that will come into play in the definition of the quasi-Trefftz space:
for $f\in C^{|\mi|+2}(K)$,
$$
D^\mi(\square_{\rho,G} f)
=\sum_{k=1}^n D^{\mi+(\be_k,0)}\left(\frac1{\rho} D^{(\be_k,0)} f\right)+D^{(\mi_\bx,i_t+2)}\Big(Gf\Big),
$$
\begin{equation}\label{eq:DiSquare}
\begin{aligned}
\Rightarrow\quad(D^\mi\square_{\rho,G} f)(\bx_K,t_K)
=&\sum_{k=1}^n\sum_{\mj_\bx\le \mi_\bx+\be_k} \frac{(\mi_\bx+\be_k)!}{\mj_\bx!} \zeta_{\mi_\bx+\be_k-\mj_\bx} D^{(\mj_\bx+\be_k,i_t)}f (\bx_K,t_K)
\\&
- \sum_{\mj_\bx\le \mi_\bx}\frac{\mi_\bx!}{\mj_\bx!} 
g_{\mi_\bx-\mj_\bx}D^{(\mj_\bx,i_t+2)}f (\bx_K,t_K).
\end{aligned}
\end{equation}
This is obtained using Leibniz formula \eqref{def:Leibniz} and noting that only terms with $j_t=i_t$ contribute since $G$ and $\rho$ are independent of time.

We use standard notation for local $C^m$ norms and seminorms, introduce wavespeed-weighted seminorms $C^m_c$, and extend local spaces to global spaces in the piecewise-smooth case: 
for $m\in\IN_0$
\begin{align}\nonumber
\N{f}_{C^0(K)}:=&\sup_{(\bx,t)\in K}|f(\bx,t)|,&&
\abs{f}_{C^m(K)}:=\max_{|\mi|=m}\N{D^\mi f}_{C^0(K)},&&
\abs{f}_{C^m_c(K)}:=\max_{|\mi|=m}\N{c^{-i_t}D^\mi f}_{C^0(K)},
\\
C^m\Th:=&\prod_{K\in\calT_h}C^m(K),&&
\abs{f}_{C^m\Th}:=\max_{K\in\calT_h} \abs{f|_K}_{C^m(K)},&&
\abs{f}_{C^m_c\Th}:=\max_{K\in\calT_h} \abs{f|_K}_{C^m_c(K)}.
\label{eq:Cm}
\end{align}
{The $C^m_c$ seminorms are scaled with the wavespeed $c$ to ensure that all terms compared by the $\max$ have the same physical units (the unit of $f$ times [space]$^{-m}$), similarly to the anisotropic Sobolev norms in \cite[eq.~(37)]{MoPe18}.
In particular, for a time-independent $f$, these seminorms are independent of the wavespeed.
}

\subsection{Local quasi-Trefftz space and approximation properties}\label{s:QUlocal}

We define the ``{quasi-Trefftz}'' space for the second-order wave equation on a mesh element $K\in\calT_h$ as
\begin{equation}\label{eq:QU}
\mmbox{\QU^p(K):=\big\{
f\in\mathbb{P}^p(K) \mid\; D^\mi\square_{\rho,G} f(\bx_K,t_K)=0,\; \forall \mi\in \IN^{n+1}_0,\; |\mi|\le p-2
\big\},
\qquad p\in \IN.}
\end{equation}
This is the space of degree-$p$ space--time polynomials $f$, such that the Taylor polynomial of their image by the wave operator $\square_{\rho,G} f$ vanishes at the element centre $(\bx_K,t_K)$ up to order $p-2$.
From \eqref{eq:DiSquare}, the space $\QU^p(K)$ is well-defined if $\rho\in C^{\max\{p-1,1\}}$ and $G\in C^{\max\{p-2,0\}}$ in a neighbourhood of $\bx_K$. 
For $p=1$, we simply have $\QU^1(K) = \mathbb{P}^1(K)$.

\begin{rem}
Compare the above definition to the 'standard' Trefftz space. 
We define the polynomial Trefftz space for the second-order wave equation with constant parameters inside the mesh element $K$ as 
\begin{equation*}
\mathbb{U}^p(K):=\big\{u\in \mathbb{P}^p(K): \square_{\rho,G} u=0 \;\iin K\big\}.
\end{equation*}
For constant $\rho,G$, the quasi-Trefftz space $\QU^p(K)$ is equal to this space, see Remark \ref{rem:Dimension}.
\end{rem}

The next proposition shows that smooth solutions of the wave equation are approximated in $\QU^p(K)$ with optimal convergence rate with respect to the element radius $r_K$ (recall $r_K\le\diam K$ from the definition \eqref{eq:rK}).
By ``optimal'' we mean that the rate is equal to the rate offered by the full polynomial space $\IP^p(K)$.
We give two approximation estimates: one in classical $C^m$ seminorms and one in their weighted version $C^m_c$ defined in \eqref{eq:Cm}, with $r_{K,c}$ in place of $r_K$.
We will use the latter bound in the convergence analysis of the DG scheme in \S\ref{s:QWglobal}.

\begin{prop}\label{prop:Approx}
Let $u\in C^{p+1}(K)$ be solution of $\square_{\rho,G}u=0$, with $\rho\in C^{\max\{p-1,1\}}(K)$ and $G\in C^{\max\{p-2,0\}}(K)$.

Then the Taylor polynomial $T^{p+1}_K[u]\in \QU^p(K)$.

Moreover, if $K$ is star-shaped with respect to $(\bx_K,t_K)$,
with $r_K$ and $r_{K,c}$ as defined in \eqref{eq:rK} while $q\in\mathbb N_0$ satisfies $q\le p$, then
\begin{align}\label{eq:Approx}
\begin{aligned}
&\inf_{P\in\QU^p(K)}\abs{u-P}_{C^q(K)}
\le \frac{(n+1)^{p+1-q}}{(p+1-q)!} r_K^{p+1-q} \abs{u}_{C^{p+1}(K)},\\
&\inf_{P\in\QU^p(K)}\abs{u-P}_{C^q_c(K)}
\le \frac{(n+1)^{p+1-q}}{(p+1-q)!} r_{K,c}^{p+1-q} \abs{u}_{C^{p+1}_c(K)}.
\end{aligned}
\end{align}
\end{prop}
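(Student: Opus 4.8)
The plan is to use the single candidate $P=T^{p+1}_K[u]$ for all three assertions, so that the proposition reduces to two independent tasks: (a) checking that this Taylor polynomial lies in $\QU^p(K)$, and (b) estimating the Taylor remainder $u-T^{p+1}_K[u]$ in the two seminorms. Everything else is bookkeeping built on the identities \eqref{eq:coefsTP}, \eqref{eq:DiSquare} and \eqref{eq:TaylorRem} already recorded in the excerpt.

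First I would prove $T^{p+1}_K[u]\in\QU^p(K)$. Since $T^{p+1}_K[u]$ has polynomial degree at most $p$ it belongs to $\IP^p(K)$, so by \eqref{eq:QU} only the derivative conditions $D^\mi\square_{\rho,G}(T^{p+1}_K[u])(\bx_K,t_K)=0$ for $|\mi|\le p-2$ remain. The key observation is that the explicit formula \eqref{eq:DiSquare} expresses $(D^\mi\square_{\rho,G}f)(\bx_K,t_K)$ as a finite linear combination, with coefficients built from the fixed Taylor data $\zeta_{\mi_\bx},g_{\mi_\bx}$ of $1/\rho$ and $G$, of derivatives $D^{\boldsymbol\alpha}f(\bx_K,t_K)$ of total order $|\boldsymbol\alpha|\le|\mi|+2$. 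For $|\mi|\le p-2$ every such order satisfies $|\boldsymbol\alpha|\le p$, and by \eqref{eq:coefsTP} the Taylor polynomial matches $u$ exactly in all derivatives of order $\le p$ at $(\bx_K,t_K)$. Hence $(D^\mi\square_{\rho,G}T^{p+1}_K[u])(\bx_K,t_K)=(D^\mi\square_{\rho,G}u)(\bx_K,t_K)=0$, the last equality holding because $\square_{\rho,G}u\equiv0$. The regularity hypotheses on $\rho,G$ are precisely those making the coefficients in \eqref{eq:DiSquare} well defined up to the required order.

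For the unweighted estimate I would fix a multi-index $\mi$ with $|\mi|=q$ and use that differentiation commutes with Taylor truncation, namely $D^\mi T^{p+1}_K[u]=T^{p+1-q}_K[D^\mi u]$, so that $D^\mi(u-T^{p+1}_K[u])=D^\mi u-T^{p+1-q}_K[D^\mi u]$ is exactly the order-$(p+1-q)$ Taylor remainder of $D^\mi u$. Applying Lagrange's form \eqref{eq:TaylorRem} to $D^\mi u$ (legitimate since $u\in C^{p+1}(K)$ and, by star-shapedness with respect to $(\bx_K,t_K)$, the segment from $(\bx_K,t_K)$ to any $(\bx,t)\in K$ stays inside $K$), each remainder term carries a top derivative $D^{\mi+\mj}u$ with $|\mi+\mj|=p+1$, bounded by $\abs{u}_{C^{p+1}(K)}$, times a monomial of degree $p+1-q$ in $(\bx-\bx_K,t-t_K)$ bounded by $r_K^{p+1-q}$ via \eqref{eq:rK}. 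Summing the coefficients $1/\mj!$ over $|\mj|=p+1-q$ by the multinomial theorem and estimating the $\ell^1$-length of $(\bx-\bx_K,t-t_K)$ by $(n+1)$ times its $\ell^\infty$-length produces the factor $(n+1)^{p+1-q}/(p+1-q)!$; taking the supremum over $K$ and the maximum over $|\mi|=q$ then gives the first bound.

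The weighted estimate is the delicate part, and I expect the spatial variation of the wavespeed $c=c(\bx)$ to be the main obstacle. The idea is to repeat the previous computation after multiplying the $\mi$-th remainder by the weight $c^{-i_t}$ dictated by \eqref{eq:Cm}, redistributing the powers of $c$ so that each temporal factor $(t-t_K)$ is paired with a $c$ to form the weighted monomials entering the definition \eqref{eq:rK} of $r_{K,c}$, while the top derivative becomes $c^{-(i_t+j_t)}D^{\mi+\mj}u$, bounded by $\abs{u}_{C^{p+1}_c(K)}$. The subtlety is that $c^{-i_t}$ is evaluated at the point $(\bx,t)$ where the error is measured, whereas Lagrange's form evaluates the top derivative at an intermediate point $(\bx_*,t_*)$ on the segment, so the powers of $c$ do not cancel pointwise; one must verify that this redistribution, together with the anisotropic scaling built consistently into both $r_{K,c}$ and the $C^{p+1}_c$ seminorm, still yields exactly the constant $(n+1)^{p+1-q}/(p+1-q)!$ and the radius $r_{K,c}$, with no residual dependence on the variation of $c$ across $K$. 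Once this bookkeeping is in place, the same multinomial summation as in the unweighted case closes the second bound.
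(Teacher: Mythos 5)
Your route is exactly the paper's: take $P=T^{p+1}_K[u]$, prove membership in $\QU^p(K)$ by combining \eqref{eq:DiSquare} with the derivative-matching property \eqref{eq:coefsTP} (every derivative entering $D^\mi\square_{\rho,G}$ at the centre has order at most $|\mi|+2\le p$), and estimate the remainder via the identity $D^\mi T^{p+1}_K[u]=T^{p+1-|\mi|}_K[D^\mi u]$, Lagrange's formula \eqref{eq:TaylorRem} (legitimate by star-shapedness), and the multinomial identity $\sum_{|\mj|=p+1-q}1/\mj!=(n+1)^{p+1-q}/(p+1-q)!$. Your membership argument and your unweighted bound are complete and coincide with the paper's proof; the paper merely proves the weighted bound first and obtains the unweighted one by dropping all powers of $c$, so the order is immaterial.

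The genuine problem is the weighted bound, which is the one actually used in the DG convergence analysis of \S\ref{s:QWglobal}: you do not prove it. You describe the intended redistribution of the powers of $c$ and then defer the whole matter (``one must verify\dots'', ``once this bookkeeping is in place''), so the second inequality of \eqref{eq:Approx} is asserted, not established. For comparison, the paper executes precisely the redistribution you sketch: each Lagrange term is rewritten so that the monomial becomes $\big((\bx,ct)-(\bx_K,ct_K)\big)^\mj$, bounded by $r_{K,c}^{p+1-q}$ through \eqref{eq:rK}, while the top derivative carries the weight $c^{-i_t-j_t}$ and is bounded by $\abs{u}_{C^{p+1}_c(K)}$; the multinomial sum then produces the constant. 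Crucially, the paper writes this step inside a single $C^0(K)$ norm, i.e.\ with monomial, weight and top derivative all evaluated at one common point $(\bx,t)$ --- exactly the convention under which the evaluation-point mismatch you flag is invisible, and which is an exact computation when $c$ is constant on $K$. If one instead tracks the distinct points, as you propose, then ratios $\big(c(\bx_*)/c(\bx)\big)^{i_t+j_t}$ survive, and $c(\bx)(t-t_K)$ differs from the quantity $c(\bx)t-c(\bx_K)t_K$ entering the definition of $r_{K,c}$; so the verification you call for (``exactly the constant\dots, no residual dependence on the variation of $c$ across $K$'') cannot be completed by pointwise bookkeeping alone --- honestly tracked, it yields extra factors controlled by $\sup_K c/\inf_K c$ and a modified radius. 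To close your proof you must either adopt the paper's common-point reading of the weighted quantities (thereby reproducing its argument verbatim) or state the weighted estimate with those residual factors; as it stands, the key inequality remains unproven in your write-up.
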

\begin{proof}
Since $T^{p+1}_K[u]$ is polynomial of degree $p$, in order to show that it belongs to $\QU^p(K)$ we only need to verify that $D^\mi\square_{\rho,G} T^{p+1}_K[u](\bx_K,t_K)=0$, for all $|\mi|\le p-2$.
From the identity \eqref{eq:DiSquare}, this quantity is a linear combination of the partial derivatives of order at most equal to $|\mi|+2\le p$ of the Taylor polynomial at $(\bx_K,t_K)$, which according to \eqref{eq:coefsTP} coincide with the corresponding partial derivatives of $u$:
$$
D^\mi\square_{\rho,G} T^{p+1}_K[u](\bx_K,t_K)
=D^\mi\square_{\rho,G} u (\bx_K,t_K).
$$
Since $\square_{\rho,G}u=0$ in $K$, these partial derivatives vanish, hence $T^{p+1}_K[u]\in \QU^p(K)$. 

We prove the inequality in \eqref{eq:Approx} involving the weighted norms $C^m_c(K)$ using the norm definition \eqref{eq:Cm}, the identity $D^\mi T^{p+1}_K[u]=T^{p+1-|\mi|}_K[D^\mi u]$
for $|\mi|\le p$ from \cite[eq.~(3.5)]{AndreaPhD}, and Taylor's theorem \eqref{eq:TaylorRem}:
\begin{align*}
\inf_{P\in\QU^p(K)}\abs{u-P}_{C^q_c(K)}
&\le\abs{u-T^{p+1}_K[u]}_{C^q_c(K)}
\\
&=\max_{\mi\in\IN_0^{n+1},\; |\mi|=q}\N{c^{-i_t}D^\mi (u-T^{p+1}_K[u])}_{C^0(K)}
\\
&=\max_{\mi\in\IN_0^{n+1},\; |\mi|=q}
\N{c^{-i_t}(D^\mi u-T^{p+1-q}_K[D^\mi u])}_{C^0(K)}
\\
&\overset{\eqref{eq:TaylorRem}}\le \max_{\substack{\mi\in\IN_0^{n+1},\:|\mi|=q}}
\sum_{\substack{|\mj|=p+1-q}}
\frac1{\mj!}\N{\big((\bx,ct)-(\bx_K,ct_K)\big)^\mj
c^{-i_t-j_t}D^{\mi+\mj}u(\bx,t)}_{C^0(K)}
\\
&\le \frac{(n+1)^{p+1-q}}{(p+1-q)!} r_{K,c}^{p+1-q} \abs{u}_{C^{p+1}_c(K)}.
\end{align*}
In the last step we used 
$\sum_{|\mj|=p+1-q}\frac1{\mj!}= \frac{(n+1)^{p+1-q}}{(p+1-q)!}$, \cite[p.~198]{AndreaPhD}.
The first bound in \eqref{eq:Approx} follows from the same chain of inequalities, after dropping all powers of $c$.
\end{proof}

Bound \eqref{eq:Approx} gives approximation rates with respect to the mesh size ($h$-convergence) but is unsuitable for proving convergence for increasing polynomial degrees ($p$-convergence): while the coefficient in the bound is infinitesimal for $p\to\infty$, in general, the seminorm $\abs{u}_{C^{p+1}(K)}$ is not bounded in the same limit.

\begin{rem}
In general, unlike full polynomial spaces, quasi-Trefftz spaces with increasing $p$ are not nested, i.e.\ $\QU^p(K)\not\subset\QU^{p+1}(K)$.
To see this: consider $f(\bx,t)=x_1^2+t^2\in\IP^2$, $\rho=1$ and $G(\bx)=1+x_1$ in a neighbourhood $K$ of $(\bx_K,t_K)=(\bzero,0)$, then
$f$ satisfies $\square_{\rho,G} f=2-2(1+x_1)=-2x_1$, so $\square_{\rho,G}f(\bx_K,t_K)=0$ and $\partial_x\square_{\rho,G} f(\bx_K,t_K)=-2$, therefore $f\in \QU^2(K)\setminus\QU^3(K)$.

On the other hand, $\QU^p(K)\cap\IP^{p-1}(K)\subset \QU^{p-1}(K)$.
Moreover, if $\rho$ is constant, $\QU^1(K)=\IP^1(K)\subset\QU^2(K)=\{f\in\IP^2(K): \Delta f-G(\bx_K,t_K)\partial_t^2 f=0\}$.

Removing the polynomials that conflict with nestedness reduces the convergence order.
Indeed, for the example above we have 
\begin{align*}
\QU^2=\spn\big\{1,\:x,\:t,\:xt,\:x^2+t^2\big\} \quad \text{ and } \quad
\QU^3=\spn\big\{1,\:x,\:t,\:xt,\:x^2-xt^2+t^2,\:3xt^2+x^3,\:t^3+3x^2t\big\}. &
\end{align*}
If we take $\QU_*^2=\spn\{1,x,t,xt\}$, removing from $\QU^2$ the only element that is not in $\QU^3$, then we cannot have $|f-P|_{C^0}\sim r^3$ and $|f-P|_{C^2}\sim r$ in a neighbourhood of $(0,0)$ for any $P\in\QU^2_*$, because $f_{xx},f_{tt}\ne0$ and $P_{xx}=P_{tt}=0$.
\end{rem}

\begin{rem}
One could define a more general version of the space $\QU^{p}$ by imposing the vanishing of the derivatives up to an arbitrary order:
\begin{equation*}
\QU^{p,q}(K):=
\big\{f\in\mathbb{P}^p(K) \mid D^\mi\square_{\rho,G} f(\bx_K,t_K)=0,\ \forall |\mi|\le q-2\big\},
\qquad p,q\in\IN. 
\end{equation*}
For these spaces we have the inclusions $\QU^{p,q+1}(K)\subset\QU^{p,q}(K)\subset\QU^{p+1,q}(K)$ and $\QU^{p,p}(K)=\QU^p(K)$.
However, let us now motivate why the choice $q=p$ is preferable. 
\begin{itemize}
\item For $q<p$ the space $\QU^{p,q}(K)$ is larger than $\QU^p(K)$, but since $\QU^{p,q}(K)\subset\IP^p(K)$ it does not offer better $h$-convergence rates than those showed in Proposition~\ref{prop:Approx} for $\QU^p(K)$.
Moreover, it does not serve as a generalisation of a Trefftz space any more.
Indeed, in the case of constant $\rho$ and $G$ the inclusion
$\IU^p(K):=\{f\in \IP^p(K): \square_{\rho,G} f=0 \iin K\}\subset\QU^{p,q}(K)$
is always true, nevertheless the identity
$\IU^p(K)=\QU^{p,q}(K)$
holds if and only if $q\geq p$.

\item For $q>p$ the space $\QU^{p,q}(K)$ is too small and loses his favorable approximation properties.
Indeed, take $n=1$, $i_x=p-1$, $i_t=0$ and $\rho=1$.
Then, for a solution $f$ of $\square_{1,G} f=0$ we have that 
\begin{align*}
\partial_x^{ i _x}\partial_t^{ i _t}\square_{1,G} T^{p+1}_{K}[f](x_K,t_K)
&\overset{\eqref{eq:DiSquare}}=
\left(\partial_x^{p+1} T^{p+1}_{K} [f]
-\sum_{j_x=0}^{ i _x}\frac{ i _x!}{j_x!}
g_{ i _x-j_x}\partial_x^{j_x}\partial_t^{2} T^{p+1}_{K}[f]\right)(x_K,t_K)
\\&
=-\sum_{j_x=0}^{p-2}\frac{ i _x!}{j_x!}
g_{ i _x-j_x}\partial_x^{j_x}\partial_t^{2} T^{p+1}_{K}[f](x_K,t_K)
\hspace{13mm}
\big(T^{p+1}_{K} [f]\in\IP^p(K)\big)
\\&
\overset{\eqref{eq:coefsTP}}=-\sum_{j_x=0}^{p-2}\frac{ i _x!}{j_x!}
g_{ i _x-j_x}\partial_x^{j_x}\partial_t^{2} f(x_K,t_K)
\hspace{23mm}
\\&
\overset{\eqref{eq:DiSquare}}= 
\Big(
\underbrace{\partial_x^{ i _x}\partial_t^{ i _t}\square_{1,G} f}_{=0}
-\partial_x^{p+1} f
+g_{0}\partial_x^{p-1} \partial_t^2 f\Big)(x_K,t_K)
\\&=\partial_x^{p-1}\big((g_0-G)\partial_t^2f\big)(x_K,t_K)
\hspace{35mm}(\partial_x^2f=G\partial_t^2f)
\\&
\neq 0,\ \text{in general.}
\end{align*}
Therefore, $T^{p+1}_{K}[f]\notin \QU^{p,p+1}(K)$, which contradicts the essential property used in the proof of Proposition~\ref{prop:Approx} to prove the approximation properties of the space.

Moreover, for $q>p$ the dimension of $\QU^{p,q}(K)$ depends on the functions $\rho$ and $G$ (is equal to $\dim\IU^p(K)$ for constant $\rho$, $G$ and smaller in general), while we see in the following that $\dim\QU^p(K)$ is independent of~$G$.
\end{itemize}
Hence, the choice $q=p$ yields 
the smallest subspace of $\IP^p(K)$ in this class that offers the same $h$-convergence rates of $\IP^p(K)$ itself, when approximating solutions of $\square_{\rho,G} u=0$.
\end{rem}

\subsection{Global quasi-Trefftz space and DG convergence bounds}\label{s:QWglobal}

We use the local spaces $\QU^p(K)$ to define a discrete space for the DG scheme of \S\ref{s:DG}.
Recall that $\QU^p(K)$ was constructed for the second-order scalar wave equation, while the DG scheme addresses the first-order system.
A global quasi-Trefftz discrete space can be defined as
\begin{align}\label{eq:QWglobal}
\mmbox{\QW^p\Th:=\left\{
\wt\in\bH\Th: \; w|_K=\partial_t u,\;\btau |_K=-\frac1\rho \nabla u, \; u\in \QU^{p+1}(K)
\right\}, \quad p\in\IN_0.}
\end{align}
The elements of $\QW^p\Th$ are products of vector polynomials of degree at most $p$ and the datum function~$\frac1\rho$.
For general $\rho$ they are non-polynomial fields, however, as we explain in \S\ref{s:Basis}, a quasi-Trefftz implementation only requires the definition and the manipulation of a basis of $\QU^{p+1}(K)$, for each $K$, which is a space of polynomials.

Following again \cite{MoPe18}, for each element $K\in\calT_h$ we introduce a notation for the space-like and the time-like parts of its boundary and three related coefficients:
\begin{align}
\nonumber
\rho_K:=&\inf_K \rho,
\\
\nonumber
\deKspa:=&\deK\cap(\Fspa\cup\FO\cup\FT),\qquad\qquad \deKtime:=\deK\cap(\Ftime\cup\FD\cup\FN\cup\FR),
\\
\xi_K^\rtime:=&\max\Big\{
\N{2c\rho\alpha}_{L^\infty(\deK\cap(\Ftime\cup\FD))}
+\N{c\rho/\beta}_{L^\infty(\deK\cap(\Ftime\cup\FN))},\nonumber\\
\label{eq:xi}
&\hspace{10mm}\N{2\beta/(c\rho)}_{L^\infty(\deK\cap(\Ftime\cup\FN))}
+\N{1/(c\rho\alpha)}_{L^\infty(\deK\cap(\Ftime\cup\FD))},\\
&\hspace{10mm}\N{(1-\delta) c\vartheta}_{L^\infty(\deK\cap(\FR))},\qquad
\N{\delta/(c\vartheta)}_{L^\infty(\deK\cap(\FR))}\Big\}
,\nonumber\\
\xi_K^\rspace:=&\N{n^t_K\big(2(1-\gamma)^{-1}+1\big)}_{L^\infty(\deKspa)},
\nonumber\\
\xi_K:=&\max\{\xi_K^\rtime,\xi_K^\rspace\},
\nonumber
\end{align}
with $\gamma$ as defined in \eqref{eq:gamma}.
The dimensionless coefficients $\xi_K^\bullet$ measure the impact of the choices made in a concrete implementation of the DG scheme -- in terms of the numerical flux parameters and the element shapes -- on the convergence bounds of Theorem~\ref{thm:Convergence}.
If $\rho,G\in C^0\OO$ and
\begin{equation}\label{eq:FluxChoice}
\alpha=\beta^{-1}=(\rho c)^{-1}
=Gc{=\sqrt{\frac G\rho}}, \qquad \delta=\frac{c^2\vartheta^2}{1+c^2\vartheta^2},
\end{equation}
then $\xi_K^{\mathrm{time}}=3$ 
while $\xi_K^{\mathrm{space}}$ only depends on the maximal slope of the space-like faces of $K$ and on $c$. 
If all faces of $K$ are either aligned with or perpendicular to the time axis, i.e.\ $n_t\in\{0,1\}$, then $\xi_K^\rspace=3$ as well.

We measure mesh regularity by fixing a dimensionless parameter $\eta>0$ such that
\begin{equation}\label{eq:ShapeReg}
r_{K,c}\Big(|\deK^\rspace|\N{c}_{C^0(K)}^{-1}+|\deK^\rtime|\Big)\le\eta|K|\qquad \forall K\in\calT_h.
\end{equation}
Remark~\ref{rem:EtaCuboid} gives more details about $\eta$ in the case of cuboidal elements.

The next theorem gives error bounds for the quasi-Trefftz space--time DG scheme \eqref{eq:DG}.
In particular, the $\Tnorm{\cdot}\DG$ norm of the Galerkin error converges in the mesh size (measured by $r_{K,c}$ as in \eqref{eq:rK}) with rate $p+1/2$.

\begin{theorem}\label{thm:Convergence}
Let $u\in C^1(\conj Q)\cap C^{p+2}\Th$, for some $p\in\IN_0$, be solution of the IBVP \eqref{eq:IBVP_U} with
$\rho\in C^0\OO\cap C^{\max\{p-1,1\}}\Th$ and $G\in C^0\OO\cap C^{\max\{p-2,0\}}\Th$  
and $\vs=(\partial_t u,-\frac1\rho \nabla u)$ be the corresponding solution to the IBVP \eqref{eq:IBVP}.
Let $\vsh$ be the solution of the DG formulation \eqref{eq:DG} with the discrete space 
$\bVp\Th=\QW^p\Th$. 
Assume that each mesh element $K$ is star-shaped with respect to its centre point $(\bx_K,t_K)$.
Then, 
\begin{align}\label{eq:Convergence}
&\frac12\N{c^{-1}(v-v\hp)}_{L^2(\FT)}+\frac12\N{\bsigma-\bsigma\hp}_{L^2(\FT)^n}\\
&\le \Tnorm{\vs-\vsh}\DG
\nonumber\\\nonumber
&\le(1+C_{\!\calA})\frac{(n+1)^p}{p!}
\bigg(\sum_{K\in\calT_h}|K|
\bigg[\Big(\eta\N{c}_{C^0(K)}\,\xi_K+\mu_{K-}r_{K,c}\Big)\frac{(n+1)^3 r_{K,c}}{\rho_K(p+1)^2}
\\&\hspace{30mm}
+\mu_{K+}\N{c^2\rho^{-1}}_{C^0(K)}\Big(2(n+1) +\frac{(n+1)^2}{(p+1)^2}\abs{\rho}_{C^1(K)}^2\rho_K^{-2}r_{K,c}^2 \Big)\bigg]
 r_{K,c}^{2p} \abs{u}_{C^{p+2}_c(K)}^2\bigg)^{1/2}.
 \nonumber
\end{align}
The values of $C_{\!\calA}$, $\mu_{K\pm}$, $\rho_K$, $\xi_K$ and $\eta$ are defined in equations \eqref{eq:Continuity}, \eqref{eq:mupm}, \eqref{eq:xi} and \eqref{eq:ShapeReg}, respectively.
If the numerical flux parameters are set according to \eqref{eq:FluxChoice} and all space-like faces are perpendicular to the time axis, then $\xi_K=3$.

If moreover the volume penalty parameters are chosen as
\begin{equation}\label{eq:muChoice}
\mu_1|_K=\mu_2|_K = r_{K,c} \N{c}_{C^0(K)}^{-1}  \qquad \forall K\in\calT_h,
\end{equation}
then the right-hand side of the estimate \eqref{eq:Convergence} can be bounded by
\begin{align}\label{eq:Convergence2}
(1+C_{\!\calA})\frac{|Q|^{1/2}(n+1)^{p+3/2}}{p!}
\sup_{K\in\calT_h}\bigg(\frac{\N{c}_{C^0(K)}^{1/2}}{\rho_K^{1/2}}
\bigg[\frac{\eta\,\xi_K}{(p+1)^2}+2+\frac{\abs{\rho}_{C^1(K)}^2r_{K,c}^2}{(p+1)^2\rho_K^2}\bigg]^{1/2}
r_{K,c}^{p+1/2} \abs{u}_{C^{p+2}_c(K)}\bigg).
\end{align}
\end{theorem}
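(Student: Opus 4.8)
The plan is to turn the error bound into a best-approximation estimate by invoking the quasi-optimality inequality \eqref{eq:QuasiOpt} of Theorem~\ref{thm:WP}, and then to exhibit one concrete good approximant built from elementwise Taylor polynomials. First I would apply \eqref{eq:QuasiOpt} to reduce everything to $\Tnorm{\vs-\vsh}\DG\le(1+C_{\!\calA})\inf_{\wt\in\QW^p\Th}\Tnorm{\vs-\wt}\DGp$, so that the whole theorem rests on choosing $\wt$ well and bounding a single $\Tnorm{\cdot}\DGp$-distance. The natural choice is $w|_K=\partial_t P_K$, $\btau|_K=-\frac1\rho\nabla P_K$ with $P_K:=T^{p+2}_K[u]$ the degree-$(p+1)$ Taylor polynomial of $u$ at $(\bx_K,t_K)$. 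By Proposition~\ref{prop:Approx}, applied with $p$ replaced by $p+1$ and using the regularity hypotheses $u\in C^{p+2}\Th$, $\rho\in C^{\max\{p-1,1\}}\Th$, $G\in C^{\max\{p-2,0\}}\Th$, one has $P_K\in\QU^{p+1}(K)$, hence $\wt\in\QW^p\Th$; the same proposition supplies the quantitative bounds $\abs{u-P_K}_{C^q_c(K)}\le\frac{(n+1)^{p+2-q}}{(p+2-q)!}r_{K,c}^{p+2-q}\abs{u}_{C^{p+2}_c(K)}$ (realised by $P_K$ itself), which drive all subsequent estimates.

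Next I would estimate $\Tnorm{\vs-\wt}^2\DGp$ term by term against the definition \eqref{eq:DGnorm}. The structural simplification is that $\vs=(\partial_t u,-\frac1\rho\nabla u)$ is the exact, single-valued solution, so on each interior or boundary face its jumps and traces of the relevant quantities vanish and leave only the approximation error; e.g.\ $\jmp{v-w}_t=-\jmp{w}_t$ collapses to the difference of the two neighbouring Taylor images of $\partial_t u$, each controlled by first-derivative $C^1_c(K)$-seminorms of $u-P_K$ (with the weights $G^{1/2},\rho^{1/2},c^{-1}$ matching those of \eqref{eq:DGnorm}). I would split the terms into three families: (a) the face/boundary $L^2$-contributions (space-like jumps, $\FO\cup\FT$, time-like jumps, and the $\FD,\FN,\FR$ terms), bounded by $|F|^{1/2}$ times $C^0(K)$-norms of first derivatives of $u-P_K$, where the shape-regularity assumption \eqref{eq:ShapeReg} converts the face measures $|\deK^\rspace|,|\deK^\rtime|$ into $|K|/r_{K,c}$ and the flux weights $\alpha,\beta,\delta$ are collected into $\xi_K$ via \eqref{eq:xi}; (b) the $\mu^{-1/2}$-weighted volume terms, which are $\mu_{K-}$-weighted $L^2(K)$ norms of first derivatives of $u-P_K$; and (c) the $\mu^{1/2}$-weighted volume residual $\N{\mu_1^{1/2}G^{-1/2}(\nabla\cdot(\bsigma-\btau)+G\partial_t(v-w))}_{L^2(K)}$ and its $\rho$-counterpart, which form the genuinely new, non-Trefftz contribution.

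Family (c) is the crucial and most delicate step. Since $\square_{\rho,G}u=0$, the residual of the error equals $\square_{\rho,G}P_K=\square_{\rho,G}(P_K-u)$, and expanding $\nabla\cdot(\frac1\rho\nabla(P_K-u))=\frac1\rho\Delta(P_K-u)+\nabla(\frac1\rho)\cdot\nabla(P_K-u)$ produces a second-derivative term bounded via $\abs{u-P_K}_{C^2_c(K)}=O(r_{K,c}^p)$ together with a coefficient-derivative term carrying the factor $\abs{\rho}_{C^1(K)}\rho_K^{-2}$ and a first-derivative term of higher order $O(r_{K,c}^{p+1})$; this is exactly the origin of the structure $2(n+1)+\frac{(n+1)^2}{(p+1)^2}\abs{\rho}_{C^1(K)}^2\rho_K^{-2}r_{K,c}^2$ in \eqref{eq:Convergence}. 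The mechanism that makes the scheme converge at the advertised rate is the order balancing: the residual is only $O(r_{K,c}^p)$, but the choice $\mu\sim r_{K,c}$ in \eqref{eq:muChoice} adds a half power, the face terms gain a half power through $|F|^{1/2}\sim(|K|/r_{K,c})^{1/2}$, and the $\mu^{-1/2}$ volume terms lose a half power, so all three families land at the common rate $r_{K,c}^{p+1/2}$. Summing the squared contributions over $K$ and collecting $C_{\!\calA},\mu_{K\pm},\rho_K,\xi_K,\eta$ yields \eqref{eq:Convergence}.

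The first, lower, inequality is immediate from \eqref{eq:DGnorm}: the $\FT$-contributions are nonnegative summands of $\Tnorm{\cdot}^2\DG$, so with the elementary convexity bound $\frac12 a+\frac12 b\le(\frac12 a^2+\frac12 b^2)^{1/2}$ they control the stated weighted $L^2(\FT)$ norms of $v-v\hp$ and $\bsigma-\bsigma\hp$. Finally, \eqref{eq:Convergence2} follows by substituting $\xi_K=3$ (from \eqref{eq:FluxChoice} together with the perpendicular-face assumption) and $\mu_{K\pm}$ from \eqref{eq:muChoice}, bounding the element sum by $|Q|$ times the supremum over $K$ (extracting $|Q|^{1/2}$), and regrouping the powers of $(n+1)$ and $(p+1)$ into the stated constant $\frac{(n+1)^{p+3/2}}{p!}$. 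I expect the main obstacle to be the exhaustive weight bookkeeping across families (a)--(c) --- tracking $G,\rho,c,\gamma,n^t_F$ simultaneously with $\alpha,\beta,\delta,\mu_1,\mu_2$ so as to reproduce the precise constants --- with the variable-coefficient residual estimate of family (c) being the conceptually new ingredient beyond the constant-coefficient Trefftz analysis of \cite[\S6]{MoPe18}.
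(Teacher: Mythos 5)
Your proposal is correct and follows essentially the same route as the paper's proof: quasi-optimality \eqref{eq:QuasiOpt}, the elementwise Taylor approximant $T^{p+2}_K[u]\in\QU^{p+1}(K)$ supplied by Proposition~\ref{prop:Approx} (with $p$ replaced by $p+1$), control of the $\Tnorm{\cdot}\DGp$ norm by face and volume $C^0$-bounds collected into $\xi_K$ and converted via the shape-regularity parameter $\eta$ of \eqref{eq:ShapeReg}, and the product-rule expansion of the volume residual $\nabla\cdot\big(\tfrac1\rho\nabla(\cdot)\big)=\tfrac1\rho\Delta(\cdot)+\nabla\big(\tfrac1\rho\big)\cdot\nabla(\cdot)$ producing the $\abs{\rho}_{C^1(K)}^2\rho_K^{-2}$ term, with the same $r_{K,c}$ order-balancing. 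The only difference is presentational: the paper first states the generic bound of $\Tnorm{\cdot}\DGp$ for arbitrary piecewise-$C^1$ fields (borrowed from the first step of the proof of \cite[Thm.~2]{MoPe18}) and only afterwards invokes quasi-optimality and the best-approximation estimate, whereas you invoke quasi-optimality first and then estimate the chosen approximant term by term.
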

\begin{proof}
We first recall that the $\Tnorm{\cdot}\DGp$ norm in \eqref{eq:DGnorm} differs from the analogous one in \cite{MoPe18} only by the presence of the volume terms, the coefficients $\rho,G$ and the incorporation of $c$ in $\tht$ on $\FR$. Then, the first step in the proof of \cite[Thm.~2]{MoPe18} allows to control the $\Tnorm{\cdot}\DGp$ norm of any (possibly discontinuous) 
$\wt\in C^1\Th$ by local norms:
\begin{align*}
&\Tnorm{(w,\btau)}\DGp^2\\
&\le \sum_{K\in\calT_h}\!\bigg[
\xi_K^\rspace
\Big(\N{G^{1/2} w}^2_{L^2(\deKspa)}\!+\N{\rho^{1/2}\btau}^2_{L^2(\deKspa)^{n}}\Big)
\\&
\hspace{13mm}
+\xi_K^\rtime\Big(\N{c^{1/2}G^{1/2}w}^2_{L^2(\deKtime)}
+\N{c^{1/2}\rho^{1/2}\btau}^2_{L^2(\deKtime)^{n}}\Big)
\\&
\hspace{13mm}
+\Norm{\mu_1^{-1/2}G^{1/2} w}^2_{L^2(K)}
+\Norm{\mu_2^{-1/2}\rho^{1/2}\btau}^2_{L^2(K)^n}\\&
\hspace{13mm}
+\Norm{\mu_1^{1/2}G^{-1/2} (\nabla\cdot\btau+ G\partial_t w)}^2_{L^2(K)}
+\Norm{\mu_2^{1/2}\rho^{-1/2}(\nabla w+\rho\partial_t\btau)}^2_{L^2(K)^n}\bigg]
\\&
\le \sum_{K\in\calT_h}\bigg[
\Big(\xi_K^\rspace|\deKspa|+\xi_K^\rtime\N{c}_{C^0(K)}|\deKtime|+
\mu_{K-}|K|\Big)
\Big(\N{G^{1/2} w}^2_{C^0(K)}+ n\N{\rho^{1/2}\btau}^2_{C^0(K)^{n}}\Big)
\\&\hspace{3mm}+|K|\mu_{K+}
\Big(\N{G^{1/2} \partial_t w}^2_{C^0(K)}+n\N{\rho^{-1/2}\nabla w}^2_{C^0(K)^n}
+n\N{\rho^{1/2}\partial_t \btau}^2_{C^0(K)^n}+\N{G^{-1/2} \nabla\cdot\btau}^2_{C^0(K)}\Big)
\bigg],
\end{align*}
recalling that $\rho G=c^{-2}$.
For $\wt=(\partial_t z,-\frac1\rho\nabla z)$ we control the norms at the right-hand side with the weighted $C^m_c(K)$ norms of $z$ defined in \eqref{eq:Cm}:
\begin{align*}
&\N{G^{1/2}w}^2_{C^0(K)}+ n\N{\rho^{1/2}\btau}^2_{C^0(K)^{n}}
\le \rho_K^{-1}\Big(\N{c^{-1}\partial_t z}^2_{C^0(K)}+ n\N{\nabla z}^2_{C^0(K)^{n}}\Big)
\le\rho_K^{-1}(n+1)\abs{z}_{C^1_c(K)}^2,
\\
&\N{G^{1/2}\partial_t w}^2_{C^0(K)}+n\N{\rho^{-1/2}\nabla w}^2_{C^0(K)^n}
+n\N{\rho^{1/2}\partial_t \btau}^2_{C^0(K)^n}+\N{G^{-1/2}\nabla\cdot\btau}^2_{C^0(K)}
\\&\le
\N{G^{1/2}\partial^2_t z}^2_{C^0(K)}+2n\N{\rho^{-1/2}\nabla u_t}^2_{C^0(K)^n}
+\N{G^{-1/2}\frac1\rho\Delta z}^2_{C^0(K)}
+\N{G^{-1/2}\frac1{\rho^2}\nabla \rho\cdot\nabla z}^2_{C^0(K)}
\\&\le
\N{c^2 \rho^{-1}}_{C^0(K)}\Big(
\N{c^{-2}\partial^2_t z}^2_{C^0(K)}
+2n\N{c^{-1}\nabla u_t}^2_{C^0(K)^n}
+\N{\Delta z}^2_{C^0(K)}
+\abs{\rho}_{C^1(K)}^2\rho_K^{-2}
\abs{z}^2_{C^1_c(K)}\Big)
\\&\le
\N{c^2 \rho^{-1}}_{C^0(K)}\Big(2(n+1)\abs{z}^2_{C^2_c(K)}
+\abs{\rho}_{C^1(K)}^2\rho_K^{-2}\abs{z}^2_{C^1_c(K)}\Big).
\end{align*}
Now we use the quasi-optimality \eqref{eq:QuasiOpt}, the relation between the discrete spaces $\QU^{p+1}(K)$ and $\QW^p\Th$, the assumption $\vs=(\partial_t u,-\frac1\rho \nabla u)$, the local best-approximation bound \eqref{eq:Approx}, the definition of $\eta$ \eqref{eq:ShapeReg}:
\begin{align*}
&\frac1{(1+C_{\!\calA})^2}\Tnorm{\vs-\vsh}\DG^2\\
&\overset{\eqref{eq:QuasiOpt}}\le\inf_{\wth\in\QW^p\Th}\Tnorm{\vs-\wth}\DGp^2\\
&\overset{\eqref{eq:QWglobal}}=
\inf_{u\hp\in\prod_{K\in\calT_h}\QU^{p+1}(K)}\Tnorm{\big(\partial_t(u-u\hp),-\frac1\rho \nabla(u-u\hp)\big)}\DGp^2
\\&
\le\inf_{u\hp\in\prod_{K\in\calT_h}\QU^{p+1}(K)}\sum_{K\in\calT_h}
\bigg[\Big(\xi_K^\rspace|\deKspa|+\xi_K^\rtime\N{c}_{C^0(K)}|\deKtime|+\mu_{K-} |K|\Big)
\frac{n+1}{\rho_K} \abs{u-u\hp}_{C^1_c(K)}^2
\\&\hspace{30mm}
+|K|\mu_{K+}\N{c^2\rho^{-1}}_{C^0(K)} \Big(2(n+1) \abs{u-u\hp}_{C^2_c(K)}^2
+\abs{\rho}_{C^1(K)}^2\rho_K^{-2}\abs{u-u\hp}^2_{C^1_c(K)}\Big)\bigg]
\\
&\overset{\eqref{eq:Approx}}\le 
\frac{(n+1)^{2p}}{(p!)^2} 
\sum_{K\in\calT_h}
\bigg[\Big(\xi_K^\rspace|\deKspa|+\xi_K^\rtime\N{c}_{C^0(K)}|\deKtime|+\mu_{K-}|K|\Big)\frac{(n+1)^{3}r_{K,c}^2}{\rho_K(p+1)^2}
\\&\hspace{30mm}
+|K|\mu_{K+}\N{c^2\rho^{-1}}_{C^0(K)}\Big(2(n+1) +\frac{(n+1)^2}{(p+1)^2}\abs{\rho}_{C^1(K)}^2\rho_K^{-2}r_{K,c}^2 \Big)\bigg]
 r_{K,c}^{2p} \abs{u}_{C^{p+2}_c(K)}^2\\
&\overset{\eqref{eq:ShapeReg}}\le 
\frac{(n+1)^{2p}}{(p!)^2} 
\sum_{K\in\calT_h}|K|\bigg[
    \Big(\eta\N{c}_{C^0(K)}\xi_K+\mu_{K-}r_{K,c}\Big)\frac{(n+1)^3 r_{K,c}}{\rho_K(p+1)^2}
\\&\hspace{30mm}
+\mu_{K+}\N{c^2\rho^{-1}}_{C^0(K)}\Big(2(n+1) +\frac{(n+1)^2}{(p+1)^2}\abs{\rho}_{C^1(K)}^2\rho_K^{-2}r_{K,c}^2 \Big)\bigg]
 r_{K,c}^{2p} \abs{u}_{C^{p+2}_c(K)}^2.
\end{align*}
Under assumption \eqref{eq:muChoice} the last expression is bounded by
\begin{align*}
&\frac{(n+1)^{2p}}{(p!)^2} 
\sum_{K\in\calT_h}|K|\frac{\N{c}_{C^0(K)}}{\rho_K}\bigg[({\eta\,\xi_K+1})\frac{(n+1)^3}{(p+1)^2}
+2(n+1)+\frac{(n+1)^2}{(p+1)^2}\frac{\abs{\rho}_{C^1(K)}^2r_{K,c}^2}{\rho_K^2}\bigg]
 r_{K,c}^{2p+1} \abs{u}_{C^{p+2}_c(K)}^2
 \\
&\le\frac{|Q|(n+1)^{2p+3}}{(p!)^2} 
\sup_{K\in\calT_h}\bigg(\frac{\N{c}_{C^0(K)}}{\rho_K}
\bigg[\frac{\eta\,\xi_K}{(p+1)^2}+2+\frac{\abs{\rho}_{C^1(K)}^2r_{K,c}^2}{(p+1)^2\rho_K^2}\bigg]
 r_{K,c}^{2p+1} \abs{u}_{C^{p+2}_c(K)}^2\bigg),
\end{align*}
where we used that $\sum_{K\in\calT_h}|K|=|Q|$.
Estimates \eqref{eq:Convergence} and \eqref{eq:Convergence2} follow taking square roots.
\end{proof}

Theorem~\ref{thm:Convergence} immediately extends to quasi-Trefftz discrete spaces $\bVp\Th$ with different polynomial degrees in each mesh elements.

\begin{rem}[Relevance of $\mu_2$]\label{rem:mu2}
When the discrete space is taken as $\bV\hp\Th=\QW^p\Th$, the choice of the parameter $\mu_2$ is immaterial because of the vanishing of the term in $\calA(\cdot;\cdot)$ it multiplies.
On the other hand, the assertion of Theorem~\ref{thm:Convergence} holds also for the space $\bV\hp\Th=\QT^p\Th$ (defined below in \S\ref{s:fobasis}), and in this case $\mu_2$ needs to be chosen as in \eqref{eq:muChoice}.
\end{rem}

\begin{rem}[Error analysis in $C^m$ and Sobolev spaces]\label{rem:CvsH}
In Proposition~\ref{prop:Approx} and Theorem~\ref{thm:Convergence} we study approximation properties of quasi-Trefftz functions and convergence rates of the corresponding DG scheme using $C^m(K)$-type spaces.
To allow the treatment of less regular solutions, a study using classical Sobolev norms $H^m(K)$ (possibly weighted with the wavespeed similarly to \eqref{eq:Cm} above and \cite[eq.~(37)]{MoPe18}) is needed.
While the bulk of the proof of Theorem~\ref{thm:Convergence} immediately extends to this case, the best-approximation bound of Proposition~\ref{prop:Approx}, which is the key ingredient, does not.
Indeed, the discrete spaces $\QU^p(K)$ and $\QW^p\Th$ are defined from pointwise expansions of the solutions, while typical approximation estimates in Sobolev norms (such as Bramble--Hilbert theorem) require some integral averaging over a subset of the element~$K$.
This technique is not helpful in our setting: ``averaged Taylor polynomials'' \cite[p.~421]{MoPe18} of solutions of variable-coefficient PDEs are not quasi-Trefftz polynomials.

Moreover, the technique used in \cite[\S6]{MoPe18} to prove Trefftz best-approximation estimates in Sobolev spaces for piecewise-constant coefficients relies on an affine transformation of $K$ whose pull-back transforms the wave equation into its copy with unit speed (see the proof of
\cite[Corollary~3]{MoPe18}).
The element $K$ is assumed to be star-shaped with respect to the ellipsoid that is the counterimage of a ball under this transformation; the parameters defining this ellipsoid have an important role in the approximation bounds.
In the setting considered here, instead, the corresponding transformation of $K$ is not affine, the set obtained is not an ellipsoid but a more complicated shape, and the pull-back would not preserve polynomials.
All these reasons prevent a straightforward extension of the theory of \cite[\S6.1.2, 6.2.3]{MoPe18} to the smooth wavespeed case.

On the other hand, the regularity theory for the wave equation shows that point singularities generated by domain corners and by discontinuities in the derivatives of the material coefficients do not propagate in space, \cite{LuTu15,MullerPhD,MuSc15}.
Thus we expect that smooth initial and boundary data give piecewise-smooth solutions, to which the error analysis of this section applies.
A complete regularity theory for piecewise-smooth domains and coefficients is still missing, to our knowledge.

{In \S\ref{sec:numl} we study numerically an IBVP with initial datum $u_0\in H^2\OO\setminus C^2\OO$, approximated with $\bVp\Th=\QW^0\Th$.
We observe linear convergence in the final-time error (see also Remark~\ref{rem:Optimal}) suggesting that the result of Theorem~\ref{thm:Convergence} holds also with $u\in C^p\Th$ replaced by $u\in H^p\Th$.
}
\end{rem}

\begin{rem}[Rate optimality {in DG and final-time norms}]\label{rem:Optimal}
Despite the use of $C^m$ spaces in the analysis, the convergence rates {in $\Tnorm{\cdot}\DG$ norm} for a given polynomial degree are optimal: compare the term $r_{K,c}^{p+1/2}$ in \eqref{eq:Convergence2} and the term $h_K^{m_K+1/2}$ in \cite[eq.~(46)]{MoPe18} (with $h_K\approx2r_{K,c}$, $m_K=p$).
On the other hand, the solution regularity required is stronger.

{On the contrary, the convergence rates at final time (i.e.\ in $\N{\cdot}_{L^2(\FT)}$ norm) proved in Theorem~\ref{thm:Convergence} are suboptimal by half power of the mesh size.
In Figures~\ref{fig:tent1} (left) and \ref{fig:tent2} (left) we observe numerically that this is only a shortcoming of the proof: 
on quasi-uniform meshes (either Cartesian-product or tent-pitched),
$\N{c^{-1}(v-v\hp)}_{L^2(\FT)}+\N{\bsigma-\bsigma\hp}_{L^2(\FT)^n}= \calO(r_{K,c}^{p+1})$ as opposed to the rate $\calO(r_{K,c}^{p+1/2})$ expected from \eqref{eq:Convergence} and \eqref{eq:Convergence2}.
In order to prove optimal-rate bounds, a suitable duality argument needs to be devised; in the setting of space--time Trefftz-DG schemes, a special duality argument for error bounds on mesh-independent norms has been devised in \cite[\S5.4]{MoPe18} but it does not immediately allow for $\N{\cdot}_{L^2(\FT)}$ norms.}

A more sophisticated treatment of corner singularity in polygonal domains using weighted Sobolev spaces is done in \cite{BMPS20}.
\end{rem}

\begin{rem}[Value of $\eta$ for a cuboid]\label{rem:EtaCuboid}
Condition \eqref{eq:ShapeReg} is a condition on the ``chunkiness'' of the mesh elements.
For instance, if all elements are translated copies of the Cartesian product $(0,L_\bx)^n\times(0,L_t)$ between a space segment/square/cube and a time interval, the centres $(\bx_K,t_K)$ are their barycentres, and $c$ is constant in $K$, then 
\begin{align*}
|K|=L_\bx^n L_t, \qquad |\deK^\rspace|=2L_\bx^n, \qquad  |\deK^\rtime|=2nL_\bx^{n-1}L_t, \qquad 
r_{K,c}=\frac12\sqrt{nL_\bx^2+c^2L_t^2}
\end{align*}
and $\eta$ can be taken as
$\eta_{\text{cuboid}}:=2n^{\frac32}(\frac{L_\bx}{cL_t}+\frac{cL_t}{L_\bx})$ since
\begin{align*}
2n^{\frac32}\left(\frac{L_\bx}{cL_t}\!+\!\frac{cL_t}{L_\bx}\right) \!\ge 
\frac{n^{\frac32}(L_\bx+cL_t)^2}{cL_\bx L_t}\!\ge
\frac{\frac12\sqrt{nL_\bx^2+c^2L_t^2}(2L_\bx^n+2ncL_\bx^{n-1}L_t)}{cL_\bx^n L_t}
=\frac{r_{K,c}(|\deK^\rspace|+c|\deK^\rtime|)}{c|K|}.
\end{align*}
Thus $\eta$ is minimal for ``isotropic'' cuboids with $L_\bx=cL_t$.
\end{rem}

\subsection{Basis functions}\label{s:Basis}

Here, we describe the construction of basis functions for $\QU^p(K)$; those for $\QW^p\Th$ can then obtained simply by taking their appropriate partial derivatives.

To construct a basis of the quasi-Trefftz space $\QU^p(K)$ space, we first choose two polynomial bases in the space variable only:
$$
\left\{\widehat b_J\right\}_{J=1,\dots,\binom{p+n}n}
\text{ basis for }\mathbb P^p(\mathbb R^n), \quad \text{ and } \quad
\left\{\widetilde b_J\right\}_{J=1,\dots,\binom{p-1+n}n}
\text{ basis for }\mathbb P^{p-1}(\mathbb R^{n}).
$$
Their total cardinality is
$$
N(n,p):=\binom{p+n}n+\binom{p-1+n}n =\frac{(p-1+n)!\:(2p+n)}{n!\:p!}.
$$
We define the following $N(n,p)$ elements of $\QU^p(K)$:
\begin{align}\label{eq:bJ}
\left\{\refb\in\QU^p(K) \mid 
\begin{array}{l}
\refb(\cdot,t_K)=\widehat b_J \;\tand \;\partial_t \refb(\cdot,t_K)=0 \tfor J\leq \binom{p+n}n,
\\ 
\refb(\cdot,t_K)=0\; \tand \;\partial_t \refb(\cdot,t_K)=\widetilde b_{J-\binom{p+n}n}
\tfor \binom{p+n}n < J
\end{array}
\right\}_{J=1,\dots,N(n,p).}
\end{align}
In the rest of this section we show that the elements $b_J$ are well-defined, that they can be computed with a simple algorithm, and that they constitute a basis of $\QU^p(K)$.

Constructing the $J$-th basis function of \eqref{eq:bJ} is equivalent to finding the set of coefficients $(a_\mk=a_{\mk_\bx,k_t})_{\mk\in\IN_0^{n+1},|\mk|\le p}$ such that the polynomial 
$$
\refb(\bx,t):=\sum_{\mk\in\IN_0^{n+1},|\mk|\leq p} a_\mk (\bx-\bx_K)^{\mk_\bx} (t-t_K)^{k_t}
,\qquad a_\mk=\frac1{\mk!}D^\mk \refb(\bx_K,t_K),
$$
fulfills the following system of equations
\begin{align}\label{eq:systembasis}
\begin{cases}
D^\mi  \square_{\rho,G} \refb(\bx_K,t_K)=0 
&\qquad  J=1,\ldots,N(n,p),\; \mi \in\IN_0^{n+1},\; |\mi|\le p-2,\\
\refb(\cdot,t_K)=\widehat b_J \tand  \partial_t \refb(\cdot,t_K)=0
&\qquad J=1,\ldots,\binom{p+n}n,\\ 
\refb(\cdot,t_K)=0 \tand  \partial_t \refb(\cdot,t_K)=\widetilde b_J
&\qquad J=\binom{p+n}n +1,\ldots,N(n,p).
\end{cases}
\end{align}
The second and the third sets of equations assign the values of all $a_{\mk_\bx,0}$ and $a_{\mk_\bx,1}$.
From~\eqref{eq:DiSquare} and since
$D^\mi ((\bx-\bx_K)^{\mk_\bx} (t-t_K)^{k_t})\ne0$ at $(\bx_K,t_K)$ if and only if $\mi =\mk$, the first equation in \eqref{eq:systembasis} -- corresponding to the $\big(\partial_\bx^{\mi _\bx}\partial_t^{i_t}
 \square_{\rho,G} \refb\big) (\bx_K,t_K)$ term -- reads
\begin{align}\label{eq:systembasis2}
\sum_{l=1}^n\sum_{\mj_\bx\le \mi_\bx+\be_l} 
 (j_{x_l}+1)(\mi_\bx+\be_l)!i_t! \zeta_{\mi_\bx+\be_l-\mj_\bx}  a_{\mj_\bx+\be_l,i_t}
- \sum_{\mj_\bx\le \mi_\bx}  \mi_\bx!  (i_t+2)! g_{\mi_\bx-\mj_\bx} a_{\mj_\bx,i_t+2}
=0.
\end{align}
This equation is used to compute the element with $\mj_\bx=\mi_\bx$ in the last sum, since $g_\bzero=G(\bx_K)>0$ its coefficient is non-zero:
\begin{equation}
a_{\mi_\bx,i_t+2}
=
-\sum_{\mj_\bx<\mi_\bx} \frac{g_{\mi_\bx-\mj_\bx}}{g_\bzero} a_{\mj_\bx,i_t+2}
+
\sum_{l=1}^n\sum_{\mj_\bx\le \mi_\bx+\be_l} 
\frac{(i_{x_l}+1)(j_{x_l}+1)\,\zeta_{\mi_\bx+\be_l-\mj_\bx} }{(i_t+2)(i_t+1)\,g_\bzero} a_{\mj_\bx+\be_l,i_t},
\label{eq:BasisFormula}
\end{equation}
where the strict inequality $\mj<\mi$ in the summation means that $\mj\le\mi$ and $\mj\ne\mi$.
We compute these values iteratively.
We need to make sure that at every step of the iterations we use values already computed.

We note that the parameter functions $\rho$ and \rv{$G$} enters the computation of $\refb$ only through their Taylor coefficients at $(\bx_K,t_K)$, i.e.\ the $\zeta_\mi$ and $g_\mi $ defined in \eqref{eq:Gg}.

One could also write the equations \eqref{eq:systembasis2} as a linear system, where the right-hand side vector is given by the known values of $a_{\mk_\bx,0}$, $a_{\mk_\bx,1}$, and solve it.
However the recursive implementation appears simpler.

The next two sections describe in detail the iterative algorithm to compute the coefficients in the cases $n=1$ and $n>1$, respectively.
Possible choices of the space-only bases $\left\{\widehat b_J\right\}$, $\left\{\widetilde b_J\right\}$ are described in the numerical results section \ref{sec:num}.

\begin{prop}\label{prop:Basis}
The polynomials $\{\refb\}_{J=1,\ldots,N(n,p)}$ defined by \eqref{eq:systembasis} (and computable with the algorithms of \S\ref{s:Basis1D}--\ref{s:BasisND}) constitute a basis for the space $\QU^p(K)$.
\end{prop}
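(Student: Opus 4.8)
The plan is to prove the proposition in three stages: first, that each $b_J$ is well-defined by the iterative scheme; second, that the $b_J$ lie in $\QU^p(K)$; and third, that they form a basis by a dimension/linear-independence argument. The key object is the map sending a polynomial $f \in \IP^p(K)$ to the pair of Cauchy-type data $(f(\cdot,t_K), \partial_t f(\cdot,t_K)) \in \IP^p(\IR^n)\times\IP^{p-1}(\IR^n)$, which I will show restricts to an isomorphism on $\QU^p(K)$.

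\textbf{Well-definedness.} First I would verify that the recursion \eqref{eq:BasisFormula} determines every coefficient $a_{\mk_\bx,k_t}$ with $|\mk|\le p$ uniquely, given the prescribed values $a_{\mk_\bx,0}$ and $a_{\mk_\bx,1}$ (the second and third lines of \eqref{eq:systembasis}). The crucial point is ordering the computation so that each application of \eqref{eq:BasisFormula} uses only previously computed values. Inspecting \eqref{eq:BasisFormula}, the left-hand coefficient $a_{\mi_\bx,i_t+2}$ (with time-index $i_t+2$) is expressed in terms of coefficients $a_{\mj_\bx,i_t+2}$ with $\mj_\bx<\mi_\bx$ (strictly smaller spatial multi-index, same time-index) and coefficients $a_{\mj_\bx+\be_l,i_t}$ (time-index $i_t$, lowered by two). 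Hence I would induct on the time-index $k_t$, and within each fixed $k_t$ induct on the spatial multi-index $\mk_\bx$ (say by $|\mk_\bx|$, then lexicographically); the division by $g_\bzero=G(\bx_K)>0$ is always legitimate. This establishes that the coefficients exist and are unique, so each $b_J$ is well-defined and the algorithms of \S\ref{s:Basis1D}--\ref{s:BasisND} terminate. The detailed bookkeeping of this induction is exactly what the two algorithm sections carry out, so here I would only sketch the ordering.

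\textbf{Membership in $\QU^p(K)$.} By construction each $b_J\in\IP^p(K)$ satisfies the first line of \eqref{eq:systembasis}, namely $D^\mi\square_{\rho,G}b_J(\bx_K,t_K)=0$ for all $|\mi|\le p-2$, since equation \eqref{eq:systembasis2} is precisely \eqref{eq:DiSquare} rewritten in terms of the monomial coefficients $a_\mk$. Thus $b_J\in\QU^p(K)$ immediately from the definition \eqref{eq:QU}.

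\textbf{Basis property.} I would argue via the linear ``trace'' map $\Lambda\colon\QU^p(K)\to\IP^p(\IR^n)\times\IP^{p-1}(\IR^n)$, $f\mapsto(f(\cdot,t_K),\partial_t f(\cdot,t_K))$. By the well-definedness step, $\Lambda$ is \emph{injective}: if $f\in\QU^p(K)$ has $f(\cdot,t_K)=0$ and $\partial_t f(\cdot,t_K)=0$, then all $a_{\mk_\bx,0}$ and $a_{\mk_\bx,1}$ vanish, and the recursion \eqref{eq:BasisFormula} forces every remaining coefficient to vanish, so $f=0$. Moreover $\Lambda$ is \emph{surjective}, because the same recursion produces, for any prescribed data $(\widehat b,\widetilde b)$, an element of $\QU^p(K)$ with those traces. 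Hence $\Lambda$ is an isomorphism, which gives both $\dim\QU^p(K)=\binom{p+n}{n}+\binom{p-1+n}{n}=N(n,p)$ and the fact that the $b_J$, being the $\Lambda$-preimages of a basis $\{(\widehat b_J,0)\}\cup\{(0,\widetilde b_J)\}$ of the target space, are linearly independent. Since their number equals the dimension, they form a basis of $\QU^p(K)$.

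\textbf{Main obstacle.} The delicate step is verifying that the computation order in the first stage is truly admissible, i.e.\ that \eqref{eq:BasisFormula} never references a coefficient not yet determined. This hinges on the structural observation that $\rho$ and $G$ are time-independent, so \eqref{eq:DiSquare} couples $a_{\cdot,i_t+2}$ only to lower time-indices and strictly lower spatial multi-indices at the same time-index — a triangular structure. Making this triangularity precise (and thereby reducing the basis property to the invertibility of $\Lambda$) is the heart of the argument; the remaining surjectivity and dimension-count are then routine.
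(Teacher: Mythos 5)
Your proof is correct and takes essentially the same approach as the paper: the paper likewise argues well-definedness via the triangular recursion, membership in $\QU^p(K)$ by construction, linear independence from the traces at $t=t_K$, and spanning from the observation that relation \eqref{eq:BasisFormula} holds for the monomial coefficients of \emph{every} element of $\QU^p(K)$, so each such element is uniquely determined by its Cauchy data. Your trace map $\Lambda$ is just an explicit packaging of this same bijection, and your computation ordering (time index first, then spatial index) differs only harmlessly from the paper's sweep by total degree.
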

\begin{proof}
The algorithms described in \S\ref{s:Basis1D}--\ref{s:BasisND} show that \eqref{eq:systembasis} uniquely defines the $N(n,p)$ polynomials $\refb$.
The first set of conditions in \eqref{eq:systembasis} ensures that $\refb\in\QU^p(K)$.
The traces on $\{t=t_K\}$ of these polynomials ensure that they are linearly independent: if $\sum_{J=1}^{N(n,p)}c_J\refb=0$ then, by \eqref{eq:systembasis}
$\sum_{J=1}^{\binom{p+n}n}c_J\widehat b_J=0$ and 
$\sum_{J=\binom{p+n}n+1}^{N(n,p)}c_J\widetilde b_J=0$, so $c_J=0$ for all $J$ because $\left\{\widehat b_J\right\}$ and $\left\{\widetilde b_J\right\}$ are assumed to be linearly independent.

Relation \eqref{eq:BasisFormula} holds not only for the elements of the basis, but for the monomial expansion of any element of $\QU^p(K)$.
Then Algorithms \ref{algo:Basis1D} and \ref{algo:BasisND} (which simply apply \eqref{eq:BasisFormula} following a precise ordering of the multi-indices $\mi$) show that the Taylor coefficients in $(\bx_K,t_K)$ of any  $f\in\QU^p(K)$ are uniquely determined by the coefficients $a_{\mi_\bx,0}$ and $a_{\mi_\bx,1}$, and hence by $f(\cdot,t_K)$ and $\partial_t f(\cdot,t_K)$.
Since $f(\cdot,t_K)\in \IP^p(\IR^n)$ and $\partial_t f(\cdot,t_K)\in \IP^{p-1}(\IR^n)$, $f$ is linear combination of the $\{\refb\}$.
Thus this set spans $\QU^p(K)$ and we conclude the proof.
\end{proof}
From the proposition it follows that the conditions in the definition \eqref{eq:QU} of $\QU^p(K)$ are linearly independent:
$$
\dim\big(\IP^p(K)\big)-\#\{\mi \in\IN_0^{n+1}\mid |\mi |\leq p-2\}=\begin{pmatrix}p+n+1\\ n+1\end{pmatrix}-\begin{pmatrix}p+n-1\\ n+1\end{pmatrix} 
=N(n,p)=\dim\big(\QU^p(K)\big).$$

\begin{rem}\label{rem:Dimension}
Proposition \ref{prop:Basis} implies that 
$$
\dim\big(\QU^p(K)\big)=N(n,p)
=\begin{cases}
2p+1 & n=1\\ (p+1)^2 & n=2\\ \frac16(p+2)(p+1)(2p+3) & n=3
\end{cases}
\quad=\calO_{p\to\infty}(p^n).
$$
For large polynomial degrees $p$, the dimension of the quasi-Trefftz space is much smaller than the dimension of the full space--time polynomial space of the same degree: $\dim(\IP^p(K))=\binom{p+n+1}{n+1}=\calO_{p\to\infty}(p^{n+1})$ (recall that $K\subset\IR^{n+1}$).

Proposition \ref{prop:Approx} shows that both spaces $\QU^p(K)$ and $\IP^p(K)$ have comparable $h$-approximation properties when the function to be approximated is solution of the (variable-coefficient) wave equation.
This is the main advantage offered by Trefftz and quasi-Trefftz schemes: same approximation power for much fewer degrees of freedom.

The dimension of $\QU^p(K)$ is equal to the dimension of the Trefftz space of the same degree for the constant-coefficient wave equation \cite[\S6.2.1]{MoPe18}, for the
Laplace equation (i.e.\ the space of harmonic polynomials in $\IR^{n+1}$ of degree $\le p$) and for the Helmholtz equation \cite[\S3]{TrefftzSurvey} (the space of circular/spherical and plane waves in $\IR^{n+1}$ with the same approximation order).

Similarly, $\dim(\QW^p(K))=\dim(\QU^{p+1}(K))-1=\calO_{p\to\infty}(p^n)$.
\end{rem}

\subsubsection{The construction of the basis functions: the case \texorpdfstring{$n=1$}{n=1}}\label{s:Basis1D}

We first describe the one-dimensional case for the sake of clarity.

For each basis function $\refb$ we need to compute the coefficients $a_{ k _x, k _t}$, $ k _x, k _t\in\IN_0$, $ k _x+ k _t\le p$; they are represented by the dots constituting a triangular shape in the plan of indices $(k_x,k_t)\in\mathbb N_0^2$, as represented on Figure~\ref{fig:IndexTriangle}.
We recall that the coefficients $\{a_{ i _x,0},0\le i _x\le p\}$, and $\{a_{ i _x,1}, 0\le i _x\le p-1\}$, represented by the shaded area in the figure, are known from the choice of the ``Cauchy data'' bases $\left\{\widehat b_J\right\}$, $\left\{\widetilde b_J\right\}$.
Formula \eqref{eq:BasisFormula} allows to compute $a_{ i _x, i _t+2}$ from similar coefficients $a_{j_x, i _t+2}$ with $j_x< i _x$ and from 
coefficients $a_{ j _x+1, i _t}$ for $j_x\leq i_x+1$.
This suggests to proceed ``diagonally'': i.e.\ to compute the values $a_{ k _x, k _t}$ for $ k _x+ k _t=\ell$ increasingly from $\ell=2$ to $\ell=p$.
On each of these diagonals (in gray in the figure) we compute the values of $a_{ k _x,\ell- k _x}$ for decreasing $ k _x$.
This means that we perform a double loop: in terms of the graphical representation, the external loop moves away from the origin ($\nearrow$) and the inner loop moves from the $ k _x$ axis to the $ k _t$ axis ($\nwarrow$).
This procedure is described in Algorithm \ref{algo:Basis1D}.

\begin{figure}[htb]\centering
\begin{tikzpicture}[scale=.8]
\fill[yellow!50!white](-.3,-.3)--(-.3,1.3)--(6.3,1.3)--(6.3,-.3)--(-.3,-.3);
\draw[lightgray](2,0)--(0,2);
\draw[lightgray](3,0)--(0,3);
\draw[lightgray](4,0)--(0,4);
\draw[lightgray](5,0)--(0,5);
\draw[->](0,0)--(7,0); 
\draw[->](0,0)--(0,7); 
\draw[thick](0,6)--(6.3,-.3);
\draw(-.5,6)node{$p$};
\draw[thick](3,0)circle(.1);\draw[thick](4,0)circle(.1);
\draw[thick](5,0)circle(.1);\draw[thick](6,0)circle(.1);
\draw[thick](2,1)circle(.1);\draw[thick](3,1)circle(.1);
\draw[thick](4,1)circle(.1);\draw[thick](5,1)circle(.1);
\draw[thick](1,2)circle(.1);\draw[thick](2,2)circle(.1);
\draw[thick](3,2)circle(.1);
\draw[thick](0,0)circle(.1);\draw[thick](1,0)circle(.1);\draw[thick](0,1)circle(.1);
\draw[thick](2,0)circle(.1);\draw[thick](1,1)circle(.1);\draw[thick](0,2)circle(.1);
\draw[thick,fill](4,2)circle(.15);
\draw(5,2.3)node{$a_{ i _x+2, i _t}$};
\draw[thick,fill](3,2)circle(.15);
\draw[thick,fill](2,2)circle(.15);
\draw[thick,fill](1,2)circle(.15);
\draw[thick](0,3)circle(.1);\draw[thick](1,3)circle(.1);
\draw[thick](2,3)circle(.1);\draw[thick](3,3)circle(.1);
\draw[thick,fill](0,4)circle(.15);
\draw[thick,fill](1,4)circle(.15);
\draw(3.1,4.3)node{$a_{ i _x, i _t+2}$};
\draw[thick,fill](2,4)circle(.15);\draw[ultra thick,red](2,4)circle(.3);
\draw[thick](0,5)circle(.1);\draw[thick](1,5)circle(.1);
\draw[thick](0,6)circle(.1);
\draw(7,-.4)node{$ k _x$};
\draw(-.4,7)node{$ k _t$};
\draw[decorate, decoration={brace,amplitude=5}] (7.6,1.3) -- (7.6,-.3);
\draw(9,.8)node{Cauchy};
\draw(9,.2)node{data};
\draw[dashed](2,0)--(2,4); \draw(2,-.5)node{$ i _x$};
\draw[dashed](0,2)--(4,2); \draw(-.5,2)node{$ i _t$};
\end{tikzpicture}
\caption{Graphical representation of the algorithm used to compute a quasi-Trefftz basis function in the case $n=1$ (and $p=6$), see \S\ref{s:Basis1D}.
The function $\refb$ is defined by the coefficients $a_{ k _x, k _t}$ corresponding to the small circles $\circ$.
The coefficients corresponding to the dots in the shaded area ($ k _t\in\{0,1\}$) are given by the ``Cauchy data'', the second and third set of equations in \eqref{eq:systembasis}.
The first equation in \eqref{eq:systembasis} for $( i _x, i _t)=(2,2)$ relates the seven nodes depicted with a black dot $\bullet$ and is used to compute $a_{ i _x, i _t+2}$ (explicitly with \eqref{eq:BasisFormula}), corresponding to the node surrounded by a red circle \red{\textbigcircle}.
All these coefficients (in the non-shaded region) are computed with formula \eqref{eq:basisrec} in a double loop: first across diagonals $\nearrow$, and then along diagonals $\nwarrow$.}
\label{fig:IndexTriangle}
\end{figure}
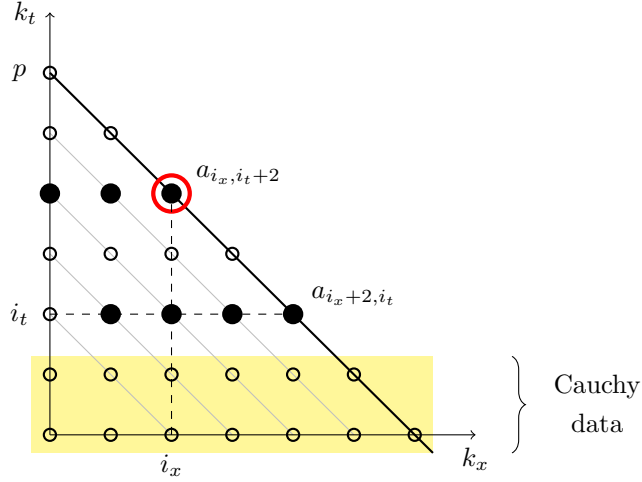

\begin{algorithm}[H]
{\sc Algorithm}\\
\SetAlgoLined
Data: $(g_m)_{m\in\IN_0}$, $(\zeta_m)_{m\in\IN_0}$, $x_K$, $t_K$, $p$.\\
Choose polynomial bases $\left\{\widehat b_J\right\}$, $\left\{\widetilde b_J\right\}$, fixing coefficients $a_{ k _x,0}$, $a_{ k _x,1}$.\\
For each $J=1,\ldots,N(n,p)$ (i.e.\ for each basis function), we construct $\refb$ as follows:\\
\For{$\ell=2$ to $p$\quad (loop across diagonals $\nearrow$)\quad}{
\For{$ i _t=0$ to $\ell-2$\quad (loop along diagonals $\nwarrow$)\quad}{
set $ i _x=\ell- i _t-2$ and compute
\begin{equation}\label{eq:basisrec}
a_{i_x,i_t+2}
=
-\sum_{j_x<i_x} \frac{g_{i_x-j_x}}{g_\bzero} a_{j_x,i_t+2}
+ \sum_{j_x\le i_x+1}
\frac{(i_{x}+1)(j_{x}+1)\,\zeta_{i_x+1-j_x}}{(i_t+2)(i_t+1)\,g_\bzero} 
\;a_{j_x+1,i_t}
\end{equation}
}}
$\displaystyle\refb(x,t)
=\sum_{0< k _x+ k _t\leq p} a_{ k _x, k _t} (x-x_K)^{ k _x} (t-t_K)^{ k _t}$
\vspace{2mm}
\caption{The algorithm for the construction of $\refb$ in the case $n=1$, \S\ref{s:Basis1D}.}
\label{algo:Basis1D}
\end{algorithm}

\subsubsection{The construction of the basis functions: the case \texorpdfstring{$n>1$}{n>1}}\label{s:BasisND}

Algorithm \ref{algo:BasisND} extends Algorithm \ref{algo:Basis1D} to the general case $n>1$.
The main novelty is that for each value of $\ell=|\mi_\bx|+i_t+2$ and of $i_t$ there are several coefficients $a_{\mi_\bx,i_t}$ to be computed, exactly one for each $\mi_\bx\in\IN_0^n$ with $|\mi_\bx|=\ell-2-i_t$, thus a further inner loop over $\mi_\bx$ is needed.
Each coefficient of the innermost loop can be computed independently of the others.

Figure \ref{fig:IndexPyramid} depicts the dependence between these coefficients, represented as integer-coordinate points in the $(\mi_\bx,i_t)$ space for $n=2$.
The general coefficient, indicated by the red diamond, is computed with \eqref{eq:basisrecNd} as linear combination of the coefficients corresponding to the black dots.
The structure of the algorithm ensures that, when a coefficient $a_{\mi_\bx,i_t}$ is computed, all the coefficients needed for the right-hand side of \eqref{eq:basisrecNd} have already been computed.

\newcommand{\DDT}{\draw[dashed,thick]}
\newcommand{\NodeThreeD}[3]{\draw (#1,#2,#3) node[circle,fill,inner sep=2pt] {};
\DDT(#1,#2,0)--(#1,#2,#3);\DDT(0,#2,0)--(#1,#2,0);\DDT(#1,0,0)--(#1,#2,0);
\DDT(#1,0,#3)--(#1,#2,#3);\DDT(0,0,#3)--(#1,0,#3);\DDT(#1,0,0)--(#1,0,#3);
\DDT(0,#2,#3)--(#1,#2,#3);\DDT(0,#2,0)--(0,#2,#3);\DDT(0,0,#3)--(0,#2,#3);}
\begin{figure}[htb]\centering
\resizebox{.49\linewidth}{!}{
\begin{tikzpicture}
[rotate around x=-90,rotate around y=0,rotate around z=-30,grid/.style={very thin,gray}]
        \foreach \x in {0,1,...,7}
        \foreach \y in {0,1,...,7}
        \foreach \z in {0,1,...,7}
        {
            \draw[grid,lightgray] (\x,0,0) -- (\x,7,0);
            \draw[grid,lightgray] (0,\y,0) -- (7,\y,0);
            \draw[grid,lightgray] (0,\y,0) -- (0,\y,7);
            \draw[grid,lightgray] (0,0,\z) -- (0,7,\z);
        }
\draw[fill=yellow,opacity=.5] (0,0,0)--(0,7,0)--(7,0,0);
\draw[fill=yellow,opacity=.4] (0,0,1)--(0,6,1)--(6,0,1);
        \draw [->] (0,0) -- (8,0,0) node [right] {$k_{x_1}$};
        \draw [->] (0,0) -- (0,8,0) node [above] {$k_{x_2}$};
        \draw [->] (0,0) -- (0,0,8) node [below left] {$k_t$};
\draw(0,0)node[left]{$\bzero$};
\draw[gray] (4,0,3)--(0,4,3);
        \draw[fill=blue,opacity=.5] (0,7,0)--(0,0,7)--(7,0,0);
\NodeThreeD313;
        \draw[] (3,1,3) node[color=red,diamond,fill] {};
\NodeThreeD{5}{1}{1};
\NodeThreeD331;
        \draw[] (0,0,3) node[circle,fill,inner sep=2pt] {};
        \draw[] (1,0,3) node[circle,fill,inner sep=2pt] {};
        \draw[] (2,0,3) node[circle,fill,inner sep=2pt] {};
        \draw[] (3,0,3) node[circle,fill,inner sep=2pt] {};
        \draw[] (0,1,3) node[circle,fill,inner sep=2pt] {};
        \draw[] (1,1,3) node[circle,fill,inner sep=2pt] {};
        \draw[] (2,1,3) node[circle,fill,inner sep=2pt] {};
        \draw[] (0,2,1) node[circle,fill,inner sep=2pt] {};
        \draw[] (0,3,1) node[circle,fill,inner sep=2pt] {};
        \draw[] (1,2,1) node[circle,fill,inner sep=2pt] {};
        \draw[] (1,3,1) node[circle,fill,inner sep=2pt] {};
        \draw[] (2,0,1) node[circle,fill,inner sep=2pt] {};
        \draw[] (2,1,1) node[circle,fill,inner sep=2pt] {};
        \draw[] (2,2,1) node[circle,fill,inner sep=2pt] {};
        \draw[] (2,3,1) node[circle,fill,inner sep=2pt] {};
        \draw[] (3,0,1) node[circle,fill,inner sep=2pt] {};
        \draw[] (3,1,1) node[circle,fill,inner sep=2pt] {};
        \draw[] (3,2,1) node[circle,fill,inner sep=2pt] {};
        \draw[] (4,0,1) node[circle,fill,inner sep=2pt] {};
        \draw[] (4,1,1) node[circle,fill,inner sep=2pt] {};
        \draw[] (5,0,1) node[circle,fill,inner sep=2pt] {};
        \draw[] (0,1,1) node[circle,fill,inner sep=2pt] {};
        \draw[] (1,0,1) node[circle,fill,inner sep=2pt] {};
        \draw[] (1,1,1) node[circle,fill,inner sep=2pt] {};
\draw (3,0,0) node[below left] {$i_{x_1}$};
\draw (5,0,0) node[below left] {$i_{x_1}+2$};
\draw (7,0,0) node[below left] {$\ell$};
\draw (0,0,1) node[left] {$i_t$};
\draw (0,0,3) node[left] {$i_t+2$};
\draw (0,0,7) node[left] {$\ell$};
\end{tikzpicture}
}
\caption{
Representation of Algorithm \ref{algo:BasisND} to compute the coefficient $a_{\mi_\bx,i_t}$ of a quasi-Trefftz basis function $\refb$ for $n=2$, $p\ge5$, $\ell=7$ and $(\mi_\bx,i_t)=(3,1,1)$.
The coefficient $a_{3,1,3}$, represented by the large red diamond $\red{\blacklozenge}$, is computed with formula \eqref{eq:basisrecNd} from the twenty-six coefficients indicated by the black dots $\bullet$.
The two yellow triangles in the planes $k_t=0$ and $k_t=1$ indicate the coefficients whose values are given by the ``initial conditions'' $\widehat b_J$ and $\widetilde b_J$ in the second and third equation of \eqref{eq:systembasis}.
To ensure that the right-end side of \eqref{eq:basisrecNd} is well-defined for every $(\mi_\bx,i_t)$, 
Algorithm \ref{algo:BasisND} computes the coefficients first looping through triangles parallel to the one depicted in blue (which corresponds to stage $\ell=7$ of the loop), then through horizontal planes, and finally along the horizontal segments determined by the intersection between the two planes.
}
\label{fig:IndexPyramid}
\end{figure}
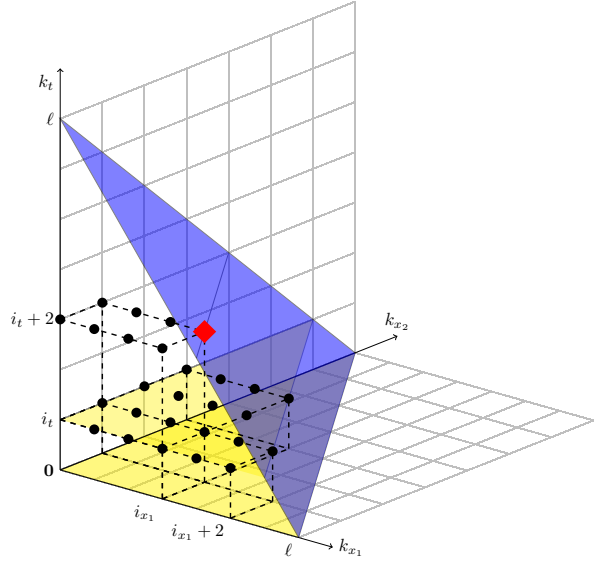

\begin{algorithm}[H]
{\sc Algorithm}\\
\SetAlgoLined

Data: $(g_m)_{m\in\IN_0}$, $\bx_K$, $t_K$, $p$.\\
Choose polynomial bases $\left\{\widehat b_J\right\}$, $\left\{\widetilde b_J\right\}$, fixing coefficients $a_{\mk_\bx,0}$, $a_{\mk_\bx,1}$.\\
For each $J=1,\ldots,N(n,p)$ (i.e.\ for each basis function), we construct $\refb$ as follows:\\
\For{$\ell=2$ to $p$\qquad (loop across $\{|\mi_\bx|+i_t=\ell-2\}$ hyperplanes, $\nearrow$)}{
    \For{$ i _t=0$ to $\ell-2$\qquad (loop across constant-time hyperplanes $\uparrow$)}{
        \For{ $\mi_\bx$ with $|\mi_\bx|=\ell- i _t-2$\qquad
}{
\begin{equation}\label{eq:basisrecNd}
a_{\mi_\bx,i_t+2}
=-\sum_{\mj_\bx<\mi_\bx} \frac{g_{\mi_\bx-\mj_\bx}}{g_\bzero} a_{\mj_\bx,i_t+2}
+ \sum_{l=1}^n\sum_{\mj_\bx\le\mi _\bx+\be_l}
\frac{ (i_{x_l}+1)(j_{x_l}+1)\, \zeta_{\mi_\bx+\be_l-\mj_\bx}}{(i_t+2)(i_t+1)\,g_\bzero} 
a_{\mj_\bx+\be_l,i_t},
\end{equation}
        }
    }
}
$\displaystyle\refb(\bx,t)
=\sum_{|\mk_\bx|+ k _t\leq p} a_{\mk_\bx, k _t} (\bx-\bx_K)^{\mk_\bx} (t-t_K)^{k _t}$
\vspace{2mm}
\caption{The algorithm for the construction of $\refb$ in the general case.}
\label{algo:BasisND}
\end{algorithm}

\subsection{Quasi-Trefftz discrete spaces for the first-order problem}\label{s:fobasis}

The quasi-Trefftz space $\QW^p\Th$ was defined in \eqref{eq:QWglobal} from derivatives of solutions to the second-order wave equation.
Thus it offers high-order approximation properties only for solutions $\vs$ to IBVPs \eqref{eq:IBVP} related to a solution $u$ of the second-order IBVPs \eqref{eq:IBVP_U} by the relation $\vs=(\partial_t u,-\nabla u)$.
We briefly describe a larger discrete space suitable to approximate the general first-order IBVP \eqref{eq:IBVP}.

For $p\in\IN_0$ and any mesh element $K\in\calT_h$, we set
\begin{align}\label{eq:QT}
\mmbox{\begin{aligned}
\QT^p(K):=&
\left\{\wt\in\IP^p(K)^{n+1} \Big|
\begin{array}{l}
D^\mi (\nabla w+\rho\partial_t\btau)(\bx_K,t_K) = \bzero \\
D^\mi (\nabla\cdot\btau+ G\partial_t w)(\bx_K,t_K) = 0 
\end{array}
\quad\forall \mi\in \IN^{n+1}_0, |\mi|\le p-1\right\},
\\
\QT^p\Th:=&\prod_{K\in\calT_h}\QT^p(K).
\end{aligned}}
\end{align}
It is easy to check that if $\rho$ is constant then $\QW^p\Th\subset\QT^p\Th$.
This implies that the convergence results of Theorem~\ref{thm:Convergence} hold also with $\QT^p\Th$ in place of $\QW^p\Th$.

In the case of variable $\rho$, the proof of Proposition~\ref{prop:Approx} immediately extends to the space $\QT^p(K)$, leading to the following result.
\begin{prop}
For $\rho,G\in C^{\max\{p-1,0\}}$, $\vs\in C^{p+1}(K)^{n+1}$ with 
$\nabla v+\rho\partial_t\bsigma = \bzero$ and $\nabla\cdot\bsigma+ G\partial_t v= 0$ in $K$, star-shaped with respect to $(\bx_K,t_K)$,
the (vector) Taylor polynomial $\wt=T^{p+1}_K[\vs]$ belongs to $\QT^p(K)$ and 
$$
\max\bigg\{ \frac{|v-w|_{C^q_c(K)}}{\abs{v}_{C^{p+1}_c(K)}}, 
\frac{|\sigma_1-\tau_1|_{C^q_c(K)}}{\abs{\sigma_1}_{C^{p+1}_c(K)}}, \ldots,
\frac{|\sigma_n-\tau_n|_{C^q_c(K)}}{\abs{\sigma_n}_{C^{p+1}_c(K)}}\bigg\}
 \le\frac{(n+1)^{p+1-q}}{(p+1-q)!} r_{K,c}^{p+1-q} 
 \qquad 0\le q\le p.
$$
The same holds for the unweighted $\abs{\cdot}_{C^q}$ seminorms (with $r_K$ in place of $r_{K,c}$).
\end{prop}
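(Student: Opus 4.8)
The plan is to follow the two-step structure of the proof of Proposition~\ref{prop:Approx}, replacing the single second-order operator $\square_{\rho,G}$ by the two first-order operators $\mathcal{L}_1\wt:=\nabla w+\rho\partial_t\btau$ and $\mathcal{L}_2\wt:=\nabla\cdot\btau+G\partial_t w$ that define $\QT^p(K)$ in \eqref{eq:QT}. First I would record the first-order analogue of the identity \eqref{eq:DiSquare}: applying the Leibniz rule \eqref{def:Leibniz} to $D^\mi\mathcal{L}_1\wt$ and $D^\mi\mathcal{L}_2\wt$, and using that $\rho=\rho(\bx)$ and $G=G(\bx)$ are time-independent (so only multi-indices with vanishing time component act on them, exactly as in the remark following \eqref{eq:DiSquare}), one finds that $(D^\mi\mathcal{L}_j\wt)(\bx_K,t_K)$ is a linear combination of the partial derivatives of $\wt$ at $(\bx_K,t_K)$ of order at most $|\mi|+1$, with coefficients built from the spatial Taylor coefficients of $\rho$ and $G$ up to order $|\mi_\bx|$. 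This already pins down the regularity: the highest coefficient derivative entering the defect conditions $|\mi|\le p-1$ is of order $p-1$, which is why $\rho,G\in C^{\max\{p-1,0\}}$ suffices (one derivative less than in the second-order case, since the operators are first order).

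Next I would prove that $\wt=T^{p+1}_K[\vs]\in\QT^p(K)$. Since $\wt\in\IP^p(K)^{n+1}$, by \eqref{eq:coefsTP} its partial derivatives at $(\bx_K,t_K)$ of order $\le p$ coincide with those of $\vs$, while those of order $>p$ vanish. For $|\mi|\le p-1$ the defect $(D^\mi\mathcal{L}_j\wt)(\bx_K,t_K)$ involves only derivatives of order $\le|\mi|+1\le p$, hence equals $(D^\mi\mathcal{L}_j\vs)(\bx_K,t_K)$, which is zero because $\vs$ solves the first-order system in $K$. This is the exact counterpart of the first paragraph of the proof of Proposition~\ref{prop:Approx}, the only change being that the first-order structure lowers the defect order from $p-2$ to $p-1$.

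For the error bound I would observe that, once the membership of the Taylor polynomial in the discrete space is used, the $C^q_c$ and $C^q$ estimates in Proposition~\ref{prop:Approx} are a purely Taylor-theoretic computation resting on \eqref{eq:TaylorRem} and on $D^\mi T^{p+1}_K[u]=T^{p+1-|\mi|}_K[D^\mi u]$; the PDE plays no role there. Since the vector Taylor polynomial acts componentwise, $w=T^{p+1}_K[v]$ and $\tau_k=T^{p+1}_K[\sigma_k]$, so applying that scalar computation to each of $v,\sigma_1,\dots,\sigma_n$ in turn yields the claimed bound on every component with the same factor $\tfrac{(n+1)^{p+1-q}}{(p+1-q)!}\,r_{K,c}^{p+1-q}$, and likewise for the unweighted seminorms with $r_K$ in place of $r_{K,c}$ (obtained by dropping the powers of $c$, as in the last line of that proof).

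I do not expect a genuine analytic obstacle here; the work is essentially bookkeeping, and the statement is accurately described in the text as an immediate extension. The only points requiring care are the shift by one in the orders (defect up to $|\mi|\le p-1$ and coefficient regularity $C^{p-1}$), and the correct use of the time-independence of $\rho$ and $G$, which guarantees that no time-derivative of the coefficients enters the expansion of $D^\mi\mathcal{L}_j\wt$ and hence that the argument closes exactly as in Proposition~\ref{prop:Approx}.
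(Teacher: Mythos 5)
Your proposal is correct and takes essentially the same route as the paper: the paper gives no separate proof, stating only that ``the proof of Proposition~\ref{prop:Approx} immediately extends to the space $\QT^p(K)$'', and your argument is precisely that extension. The two points you isolate---the defect conditions $|\mi|\le p-1$ only involve derivatives of $\wt$ of order $\le p$, which by \eqref{eq:coefsTP} equal those of $\vs$ and vanish by the first-order system (with coefficient regularity $C^{\max\{p-1,0\}}$ since the operators are first order), followed by the componentwise scalar Taylor estimate from \eqref{eq:TaylorRem}---are exactly what the paper's claim of immediacy rests on.
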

Adapting the proof of Theorem~\ref{thm:Convergence} we obtain the following statement.
\begin{prop}
Let $\vs\in C^0(\conj Q)\cap C^{p+1}\Th$, for some $p\in\IN_0$, be solution of the IBVP \eqref{eq:IBVP} with
$\rho,G\in C^0\OO\cap C^{\max\{p-1,1\}}\Th$.
Let $\vsh$ be the solution of the DG formulation \eqref{eq:DG} with the discrete space 
$\bVp\Th=\QT^p\Th$. 
Assume that each mesh element $K$ is star-shaped with respect to its centre point $(\bx_K,t_K)$.
Then, 
\begin{align*}
&\Tnorm{\vs-\vsh}\DG
\\
&
\le(1+C_{\!\calA})\frac{(n+1)^{p}}{p!} 
\bigg(\sum_{K\in\calT_h}|K|\bigg[
n\frac{(n+1)^2}{(p+1)^2}\Big(\eta\N{c}_{C^0(K)}\xi_K+\mu_{K-}r_{K,c}\Big)r_{K,c}
+2n\mu_{K+}\N{c}_{C^0(K)}^2 
\bigg]\\
&\hspace{40mm}\cdot\Big(\N{G}_{C^0(K)}  \N{v}_{C^{p+1}_c(K)}^2+\N{\rho}_{C^0(K)}  \N{\bsigma}_{C^{p+1}_c(K)^n}^2\Big)
r_{K,c}^{2p}\bigg)^{1/2}.
\end{align*}
If moreover the volume penalty parameters are chosen as in \eqref{eq:muChoice},
then the right-hand side of this estimate can be bounded by
\begin{align*}
&(1+C_{\!\calA})
\frac{|Q|(n+1)^{p+3/2}}{p!}
\\
&\cdot\sup_{K\in\calT_h}\bigg(\N{c}_{C^0(K)} \Big(\frac{\eta\xi_K}{(p+1)^2}+2\Big)
\Big(\N{G}_{C^0(K)}  \N{v}_{C^{p+1}_c(K)}^2+\N{\rho}_{C^0(K)}  \N{\bsigma}_{C^{p+1}_c(K)^n}^2\Big)\bigg)^{1/2}
r_{K,c}^{p+1/2}.
\end{align*}
\end{prop}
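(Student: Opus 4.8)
The plan is to follow the proof of Theorem~\ref{thm:Convergence} line by line, replacing its scalar-potential argument with a direct treatment of the first-order residuals. First I would apply the quasi-optimality bound \eqref{eq:QuasiOpt}: since $\vs\in C^0(\conj Q)\cap C^{p+1}\Th$ has the regularity required in \eqref{eq:VSregularity} (global continuity together with piecewise $C^1$ smoothness gives $v\in H^1(Q)$ and $\bsigma\in H^1(Q)^n$, so the consistency identity applies), one has $\Tnorm{\vs-\vsh}\DG\le(1+C_{\!\calA})\inf_{\wt\in\QT^p\Th}\Tnorm{\vs-\wt}\DGp$. As comparison function I would take the (smooth, hence admissible) elementwise vector Taylor polynomial $\wt$ with $\wt|_K=T^{p+1}_K[\vs]$, which belongs to $\QT^p(K)$ by the preceding Proposition; I then use its components $w,\btau$ to form the error $v-w$, $\bsigma-\btau$. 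I would next reuse verbatim the first step of the proof of \cite[Thm.~2]{MoPe18} (exactly as in Theorem~\ref{thm:Convergence}) to bound $\Tnorm{\vs-\wt}\DGp^2$ by a sum over $K$ of a zeroth-order part $\bigl(\xi_K^\rspace|\deKspa|+\xi_K^\rtime\N{c}_{C^0(K)}|\deKtime|+\mu_{K-}|K|\bigr)\bigl(\N{G^{1/2}(v-w)}^2_{C^0(K)}+n\N{\rho^{1/2}(\bsigma-\btau)}^2_{C^0(K)}\bigr)$, plus a first-order part $|K|\mu_{K+}$ times the four derivative norms $\N{G^{1/2}\partial_t(v-w)}^2_{C^0(K)}$, $n\N{\rho^{-1/2}\nabla(v-w)}^2_{C^0(K)}$, $n\N{\rho^{1/2}\partial_t(\bsigma-\btau)}^2_{C^0(K)}$ and $\N{G^{-1/2}\nabla\cdot(\bsigma-\btau)}^2_{C^0(K)}$.

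The only genuine difference from Theorem~\ref{thm:Convergence} is that the comparison function is now a generic vector Taylor polynomial rather than $(\partial_t z,-\frac1\rho\nabla z)$. Consequently neither residual combination $\nabla w+\rho\partial_t\btau$, $\nabla\cdot\btau+G\partial_t w$ vanishes identically, so both volume penalty terms survive (cf.\ Remark~\ref{rem:mu2}); on the other hand, since $\btau$ carries no factor $\frac1\rho$, differentiating it produces no chain-rule term in $\abs{\rho}_{C^1(K)}$, which is why the resulting bound is cleaner than its scalar counterpart and only requires $\vs\in C^{p+1}$. For the error components I would bound all four first-order norms through the $q=1$ case of the preceding approximation Proposition, which controls $\abs{v-w}_{C^1_c(K)}$ and each $\abs{\sigma_k-\tau_k}_{C^1_c(K)}$ by $\frac{(n+1)^p}{p!}r_{K,c}^p$ times $\abs{v}_{C^{p+1}_c(K)}$, resp.\ $\abs{\sigma_k}_{C^{p+1}_c(K)}$ (and $\abs{\cdot}_{C^{p+1}_c}\le\N{\cdot}_{C^{p+1}_c}$). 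To absorb the $c$-weights I would use $c^2=(\rho G)^{-1}$ in the form $\N{G^{1/2}\partial_t(v-w)}_{C^0(K)}\le\N{c}_{C^0(K)}\N{G}^{1/2}_{C^0(K)}\abs{v-w}_{C^1_c(K)}$ and $\N{\rho^{1/2}\partial_t(\bsigma-\btau)}_{C^0(K)}\le\N{c}_{C^0(K)}\N{\rho}^{1/2}_{C^0(K)}\abs{\bsigma-\btau}_{C^1_c(K)}$, the space-derivative norms needing no weight; this yields a first-order contribution $\lesssim 2n\,|K|\mu_{K+}\N{c}^2_{C^0(K)}\bigl(\N{G}_{C^0(K)}\abs{v}^2_{C^{p+1}_c(K)}+\N{\rho}_{C^0(K)}\abs{\bsigma}^2_{C^{p+1}_c(K)}\bigr)r_{K,c}^{2p}$, with $2n$ coming from the $n$ vector components and from splitting the divergence/time-derivative norms by the triangle inequality. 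The zeroth-order norms I would instead bound by the $q=0$ case, e.g.\ $\N{G^{1/2}(v-w)}^2_{C^0(K)}\le\N{G}_{C^0(K)}\abs{v-w}^2_{C^0_c(K)}$ with rate $\frac{(n+1)^{p+1}}{(p+1)!}r_{K,c}^{p+1}$, and likewise for $\btau$.

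It then remains to assemble the two contributions exactly as at the end of the proof of Theorem~\ref{thm:Convergence}. Using the mesh-regularity assumption \eqref{eq:ShapeReg} in the form $|\deKspa|+\N{c}_{C^0(K)}|\deKtime|\le\eta\N{c}_{C^0(K)}|K|/r_{K,c}$, together with $\xi_K=\max\{\xi_K^\rspace,\xi_K^\rtime\}$, turns the zeroth-order prefactor into $|K|\bigl(\eta\N{c}_{C^0(K)}\xi_K+\mu_{K-}r_{K,c}\bigr)/r_{K,c}$, which multiplied by the squared $q=0$ rate $\sim r_{K,c}^{2(p+1)}$ gives order $r_{K,c}^{2p+1}$ and, once the common prefactor $\frac{(n+1)^{2p}}{(p!)^2}$ is pulled out, produces the factor $\frac{(n+1)^2}{(p+1)^2}$ (whereas the $q=1$ rate matches the prefactor exactly and leaves only the constants $2n$). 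Combined with the first-order part of order $r_{K,c}^{2p}$ and a square root, this is precisely the first displayed estimate. Finally, inserting the choice \eqref{eq:muChoice}, under which $\mu_{K+}=\mu_{K-}r_{K,c}=r_{K,c}\N{c}^{-1}_{C^0(K)}$, and using $\sum_{K}|K|=|Q|$ to pass from the sum to a supremum, delivers the second, $\sup_K$-type bound.

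The main obstacle is one of bookkeeping rather than of ideas: correctly threading the wavespeed weights through the identities $Gc^2=\rho^{-1}$ and $\rho c^2=G^{-1}$ so that the right powers of $\N{c}_{C^0(K)}$ and the factors $\N{G}_{C^0(K)},\N{\rho}_{C^0(K)}$ appear, and accurately accumulating the constants $n$, $(n+1)$ arising from the vector components, the divergence, the two approximation orders $q=0,1$, and the triangle-inequality splitting of the residuals (a uniform factor $n$ in front of both the $G$- and $\rho$-parts being a harmless over-estimate used only to present a clean statement). It is also worth checking at the outset that the stated hypotheses $\vs\in C^0(\conj Q)\cap C^{p+1}\Th$ and $\rho,G\in C^0\OO\cap C^{\max\{p-1,1\}}\Th$ are exactly those needed for both the consistency identity \eqref{eq:VSregularity} and the applicability of the preceding approximation Proposition on each $K$.
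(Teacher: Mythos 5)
Your proposal is correct and follows essentially the same route as the paper, whose entire proof is the single sentence ``Adapting the proof of Theorem~\ref{thm:Convergence} we obtain the following statement'': you carry out precisely that adaptation — quasi-optimality \eqref{eq:QuasiOpt}, the local \cite[Thm.~2]{MoPe18} bound on $\Tnorm{\cdot}\DGp$, the $q=0$ and $q=1$ cases of the $\QT^p(K)$ approximation proposition, then \eqref{eq:ShapeReg} and \eqref{eq:muChoice} — and you correctly identify the two structural differences from the $\QW^p$ case (no $\abs{\rho}_{C^1(K)}$ chain-rule term, both volume residuals surviving).
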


Given any basis $\left\{\widetilde b_J(\bx)\right\}_{J=1,\ldots,\binom{p+n}n}$ of $\IP^p(\IR^n)$, we can define a basis for $\QT^p(K)$ as
\begin{align*}
\Bigg\{\bb_{J,l}(\bx,t)\in\QT^p(K) \;\text{ such that }\;
\begin{aligned}
&\bb_{J,0}(\bx,t_K)=\left(\widetilde b_J(\bx), \bzero\right),\\
&\bb_{J,l}(\bx,t_K)=\left(0, \widetilde b_J(\bx)\be_l\right), \; l=1,\ldots, n
\end{aligned}
\Bigg\}_{J=1,\ldots,\binom{p+n}n;\; l=0,\ldots,n}.
\end{align*}

To compute explicitly a basis element $\bb_{J,l}$ from $\widetilde b_J$, we expand it in monomials:
\begin{align*}
\bb_{J,l}(\bx,t)&= \sum_{\substack{\mk\in\IN_0^{n+1},\: |\mk|\le p}}
\ba_{\mk}(\bx-\bx_K)^{\mk_\bx} (t-t_K)^{k_t},\quad  l=0,\dots,n, 
\qquad\tfor \big\{\ba_{\mk}=\ba_\mk(J,l)\big\}_{|\mk|\le p}\in\IR^{n+1}.
\end{align*}
We index the components of the field $\bb(\bx,t)=\bb_{J,l}(\bx,t)$ from $b^0(\bx,t)$ to $b^n(\bx,t)$, and write similarly
$\ba_\mk=(a_\mk^0,\ldots,a_\mk^n)$.
Space--time multi-indices are split as previously in space and time parts $\mk=(\mk_\bx,k_t)$.
Then the conditions corresponding respectively to $D^\mi \Big(\partial_{x_\lambda} b^0+\rho\partial_t b^\ind\Big)(\bx_K,t_K)$ for $\ind$ from $1$ to $n$, namely the components of the vector-valued constraint, and $D^\mi \Big(\sum_{\ind=1}^n \partial_{x_\ind}b^\ind+ G\partial_t b^0\Big)(\bx_K,t_K) $, namely the scalar-valued constraint, in the definition \eqref{eq:QT} of $\QT^p(K)$ can be written in terms of coefficients as
\begin{align*}
0=&
(\mi_\bx+\be_\ind)!i_t!a^0_{(\mi_\bx+\be_\ind,i_t)}+\sum_{\mj_\bx\leq\mi_\bx}\mi_\bx!(i_t+1)!\zeta_{\mi_\bx-\mj_\bx}a^\ind_{(\mj_\bx,i_t+1)},  \qquad  \ind=1,\dots,n,
\\
0=&
\sum_{\ind=1}^n (\mi_\bx+\be_\ind)!i_t!a^\ind_{(\mi_\bx+\be_\ind,i_t)} 
+\sum_{\mj_\bx\leq\mi_\bx} \mi_\bx!(i_t+1)! g_{\mi_\bx-\mj_\bx} a^0_{(\mj_\bx,i_t+1)}.
\end{align*}
Then $\bb=\bb_{J,l}\in\QT^p(K)$ if and only if its coefficients satisfy the recurrence relations
\begin{align*}
&a^0_{(\mi_\bx,i_t+1)}=-\sum_{\ind=1}^n \frac{i_{x_\ind}+1}{g_\bzero(i_t+1)}a^\ind_{(\mi_\bx+\be_\ind,i_t)}  
- \sum_{\mj_\bx<\mi_\bx}  \frac{g_{\mi_\bx-\mj_\bx}}{g_\bzero} a^0_{(\mj_\bx,i_t+1)},\\
&a^\ind_{(\mi_\bx,i_t+1)}  = -\frac{i_{x_\ind}+1}{(i_t+1)\zeta_\bzero} a_{(\mi_\bx+\be_\ind,i_t)}^0
- \sum_{\mj_\bx<\mi_\bx}  \frac{\zeta_{\mi_\bx-\mj_\bx}}{\zeta_\bzero} a^\lambda_{(\mj_\bx,i_t+1)},
\qquad  \ind=1,\dots,n.
\end{align*}
The coefficients $a^\ind_{(\mk_\bx,0)}$, $\ind=0,\ldots,n$, $|\mk_\bx|\le p$, are known from the comparison with the space-only basis element $\widetilde b_J$.
All the other coefficients $a^\ind_{\mk}$ can be computed with a double loop: first over $|\mk|=1,\ldots,p$, and then over $k_t=1,\ldots,|\mk|$, similarly to Algorithms~\ref{algo:Basis1D}--\ref{algo:BasisND}.
The procedure is described in Algorithm \ref{algo:BasisFO}.

It is possible to verify that the $\bb_{J,l}$ constitute a basis of $\QT^p(K)$ following the lines of the proof of Proposition~\ref{prop:Basis}.
It follows that 
\begin{align*}
\dim\big(\QT^p(K)\big)
=(n+1)\binom{p+n}n 
=\frac{(n+1)(p+1)}{2p+2+n}\big(\dim\big(\QW^p(K)\big)+1\big)
=\calO_{p\to\infty}(p^n).
\end{align*}

\begin{algorithm}[H]
{\sc Algorithm}\\
\SetAlgoLined

Data: $(g_m)_{m\in\IN_0}$, $\bx_K$, $t_K$, $p$.\\
Choose polynomial basis $\{\widetilde b_J\}$, fixing coefficients $a_{\mk_\bx,0}^\ind$.\\
For each $J=1,\ldots,N(n,p)$ and $l=0,\ldots,n$, we construct $\bb_{J,l}$ as follows:\\
\For{$\ell=1$ to $p$\qquad (loop across $\{|\mi_\bx|+i_t=\ell-1\}$ hyperplanes, $\nearrow$)}{
    \For{$ i _t=0$ to $\ell-1$\qquad (loop across constant-time hyperplanes $\uparrow$)}{
        \For{ $\mi_\bx$ with $|\mi_\bx|=\ell- i _t-1$\qquad
}{
\begin{align*}
&a^0_{(\mi_\bx,i_t+1)}=-\sum_{\ind=1}^n \frac{i_{x_\ind}+1}{g_\bzero(i_t+1)}a^\ind_{(\mi_\bx+\be_\ind,i_t)}  
- \sum_{\mj_\bx<\mi_\bx}  \frac{g_{\mi_\bx-\mj_\bx}}{g_\bzero} a^0_{(\mj_\bx,i_t+1)},\\
&a^\ind_{(\mi_\bx,i_t+1)}  = -\frac{i_{x_\ind}+1}{(i_t+1)\zeta_\bzero} a_{(\mi_\bx+\be_\ind,i_t)}^0
- \sum_{\mj_\bx<\mi_\bx}  \frac{\zeta_{\mi_\bx-\mj_\bx}}{\zeta_\bzero} a^\lambda_{(\mj_\bx,i_t+1)},
\qquad  \ind=1,\dots,n.
\end{align*}
        }
    }
}
$\displaystyle
\bb_{J,l}(\bx,t)= \sum_{\substack{\mk\in\IN_0^{n+1},\: |\mk|\le p}}
\ba_{\mk}(\bx-\bx_K)^{\mk_\bx} (t-t_K)^{k_t}$
\vspace{2mm}
\caption{The algorithm for the construction of $\bb_{J,l}$ in the general case.}
\label{algo:BasisFO}
\end{algorithm}

\section{Numerical experiments}\label{sec:num}

We present some numerical test results in one and two space dimensions.
The quasi-Trefftz DG scheme has been implemented in NGSolve, see \cite{ngsolve}.  
Except for the last example, we consider the initial boundary value problem \eqref{eq:IBVP_U} with Dirichlet boundary conditions only, i.e.\ $\GN=\GR=\emptyset$.
In \Cref{sec:numcoeff,sec:nummesh,sec:numspace,sec:numgauss} we will assume $\rho\equiv 1$ and test our method with the following wavespeed parameters $ G$, exact solutions $u$, and space--time domain $Q$: 
\begin{subequations}\label{eq:exsol3x}\begin{align}
\label{eq:exsolAiry1D}
n=1,&\quad G(x)=x+1,\; &&
u(x,t)=\mathrm{Ai}(-x-1)\cos(t), && Q=(0,5)^2,\\
\label{eq:exsolAiry2D}
n=2,&\quad G(x_1,x_2)=x_1+x_2+1,\; &&
u(x_1,x_2,t)=\mathrm{Ai}(-x_1-x_2-1)\cos(\sqrt{2}t), && Q=(0,1)^3,\\
\label{eq:exsolPower2D}
n=2,&\quad G(x_1,x_2)=(x_1+x_2+1)^{-2},\; &&
u(x_1,x_2,t)=(x_1+x_2+1)^a \ee^{-\sqrt{2}\sqrt{a(a-1)}t}, && Q=(0,1)^3.
\end{align}\end{subequations}
Here $\mathrm{Ai}$ is the Airy function, which fulfills $\mathrm{Ai}''(x)=x\mathrm{Ai}(x)$, and we choose $a=2.5$ in \eqref{eq:exsolPower2D}.
The corresponding wavespeeds $c(\bx)$ range respectively in the intervals $[\sqrt{1/6},1]\approx[0.41,1]$, $[\sqrt{1/3},1]\approx[0.58,1]$ and $[1,3]$ for the three problems \eqref{eq:exsol3x}.
Then, the solution of the first-order wave equation is given by $\vs=(\partial_t u,-\nabla u)$.
In \S\ref{sec:numrho} we consider $\rho$ to be non-constant, testing with the exact solution given by
\begin{subequations}\begin{align}
n=1,&\quad G(x)=x^2-2,\; \rho(x)=x^{-2}&&
u(x,t)=\frac{\sin(x)-x\cos(x)}{x^2}\cos(t), && Q=(2,3)\times(0,1),\tag{\ref{eq:exsol3x}d}
\label{eq:exsolBessel}
\end{align}\end{subequations}
with the first-order system solution given by $\vs=(\partial_t u,-\frac1\rho\nabla u)$.

To construct the quasi-Trefftz basis we pre-compute coefficients of {$\frac1\rho$ and $G$}'s Taylor expansion \eqref{eq:Gg} at the centre of each mesh element.
We choose a monomial basis (scaled according to the element size) for $\left\{\widehat b_J\right\}$, and $\left\{\widetilde b_J\right\}$, as input of Algorithms~\ref{algo:Basis1D}--\ref{algo:BasisND}. 
This is motivated by experiments described in \cite[\S6.3]{StockerSchoeberl}, where monomials, chosen as initial basis for the construction of the standard Trefftz space, outperformed Legendre and Chebyshev basis. 
Remarkably, if space--time mesh elements share the same centre in space, namely $\bx_K$, then the coefficients of the quasi-Trefftz basis functions are identical on both elements, therefore they can be computed once and used on both elements.

\medskip
The section continues as follows.
In \S\ref{sec:numcoeff} we compare different choices for the penalisation coefficients.
The quasi-Trefftz discretisation is compared against a full polynomial space and a standard Trefftz space in \S\ref{sec:numspace}.
In \S\ref{sec:nummesh} we use a special type of space--time meshes allowing for semi-explicit time-stepping: tent-pitched meshes.
We show snapshots of the numerical approximation of a Gaussian pulse traveling through a heterogeneous medium in \S\ref{sec:numgauss}.
{Finally, we study the behaviour of the quasi-Trefftz scheme for a problem with variable mass density $\rho$ in \S\ref{sec:numrho} and for a problem with a coarse solution ($v_0,\bsigma_0\notin C^1\OO$) in \S\ref{sec:numl}.
}

\subsection{Volume penalisation and numerical flux parameters}\label{sec:numcoeff}
In this experiment we consider different combinations of the numerical flux parameters $\alpha,\beta$ and the volume penalisation coefficient $\mu_1$.
We recall that for $\QW^p\Th$ the choice of the parameter $\mu_2$ is irrelevant (see Remark~\ref{rem:mu2}).
Furthermore, we use Dirichlet boundary conditions, thus $\delta$ does not appear.
We compare the choices for the parameters given in \eqref{eq:FluxChoice} and \eqref{eq:muChoice} against setting them to zero.
{(The case $\alpha=\beta=0$ was investigated in \cite{KretzschmarPhD,EKSW15,KSTW2014} for the Maxwell equations with constant coefficients, even though in this case the DG scheme is not guaranteed to be well-posed.)}
We fix $p=4$ and a sequence of Cartesian meshes in 1+1 dimensions with square space--time mesh elements 
$K=(\bx_K-\frac h2,\bx_K+\frac h2)\times(t_K-\frac h2,t_K+\frac h2)$, and compare against the exact solution \eqref{eq:exsolAiry1D} (which can be seen in Figure~\ref{fig:mesh}).

\begin{table}[!ht]\centering
\begin{tabular}{lllllllll}
\toprule  
\multicolumn{9}{c}{$\mu_1= 0$, $p=4$,  problem \eqref{eq:exsolAiry1D}} \\
\midrule
& \multicolumn{2}{c}{$\alpha=0,\ \beta=0$} & \multicolumn{2}{c}{$\alpha=c^{-1},\ \beta=0$} & \multicolumn{2}{c}{$\alpha=0,\ \beta=c$} & \multicolumn{2}{c}{$\alpha=c^{-1},\ \beta=c$}\\
\midrule$ h$
& DG-error & rate & DG-error & rate & DG-error & rate & DG-error & rate \\
\midrule
\csvreader[head to column names,filter ifthen=\equal{}{}]{./cartairy1gppw.csv}{}
{$2^{-\hnr}$ 
& \num[round-precision=2,round-mode=figures, scientific-notation=true]{\dgerrornnn} 
& \num[round-precision=3,round-mode=figures, scientific-notation=false]{\dgratennn} 
& \num[round-precision=2,round-mode=figures, scientific-notation=true]{\dgerrorcnn} 
& \num[round-precision=3,round-mode=figures, scientific-notation=false]{\dgratecnn} 
& \num[round-precision=2,round-mode=figures, scientific-notation=true]{\dgerrorncn} 
& \num[round-precision=3,round-mode=figures, scientific-notation=false]{\dgratencn} 
& \num[round-precision=2,round-mode=figures, scientific-notation=true]{\dgerrorccn} 
& \num[round-precision=3,round-mode=figures, scientific-notation=false]{\dgrateccn} 
\tabularnewline
}
\\\addlinespace[-\normalbaselineskip]\bottomrule
\end{tabular}
\caption{Errors committed by the quasi-Trefftz DG method for different combinations of the numerical flux parameters and vanishing volume penalisation coefficient.}
\label{tab:numcoeff1}
\end{table}

\begin{table}[!ht]\centering
\begin{tabular}{lllllllll}
\toprule  
\multicolumn{9}{c}{$\mu_1|_K= r_{K,c} \N{c}_{L^\infty(K)}^{-1}$, $p=4$,  problem \eqref{eq:exsolAiry1D}}\\
\midrule
& \multicolumn{2}{c}{$\alpha=0,\ \beta=0$} & \multicolumn{2}{c}{$\alpha=c^{-1},\ \beta=0$} & \multicolumn{2}{c}{$\alpha=0,\ \beta=c$} & \multicolumn{2}{c}{$\alpha=c^{-1},\ \beta=c$}\\
\midrule$ h$ 
& DG-error 
& rate & DG-error & rate & DG-error & rate & DG-error & rate \\
\midrule
\csvreader[head to column names,filter ifthen=\equal{}{}]{./cartairy1gppw.csv}{}
{$2^{-\hnr}$ 
& \num[round-precision=2,round-mode=figures, scientific-notation=true]{\dgerrornnrc} 
& \num[round-precision=3,round-mode=figures, scientific-notation=false]{\dgratennrc} 
& \num[round-precision=2,round-mode=figures, scientific-notation=true]{\dgerrorcnrc} 
& \num[round-precision=3,round-mode=figures, scientific-notation=false]{\dgratecnrc} 
& \num[round-precision=2,round-mode=figures, scientific-notation=true]{\dgerrorncrc} 
& \num[round-precision=3,round-mode=figures, scientific-notation=false]{\dgratencrc} 
& \num[round-precision=2,round-mode=figures, scientific-notation=true]{\dgerrorccrc} 
& \num[round-precision=3,round-mode=figures, scientific-notation=false]{\dgrateccrc} 
\tabularnewline
}
\\\addlinespace[-\normalbaselineskip]\bottomrule
\end{tabular}
\caption{Errors committed by the quasi-Trefftz DG method for different combinations of the numerical flux parameters and positive volume penalisation coefficient.}
\label{tab:numcoeff2}
\end{table}

The results are shown in \Cref{tab:numcoeff1,tab:numcoeff2}.
The errors are measured in the $\Tnorm{\cdot}\DG$ norm \eqref{eq:DGnorm}.
We observe optimal convergence in all cases, despite vanishing jump- or volume-penalisation term. 
Even though the volume penalisation term is needed for the well-posedness proof in Theorem~\ref{thm:WP}, in this example it is not necessary for the discrete problem to be well-posed and for the numerics to converge with optimal rate.
In this example, the choices suggested by the analysis (shown in the last column of \Cref{tab:numcoeff2}) result in a slightly larger error: this is because some of the terms on time-like faces in the $\Tnorm{\cdot}\DG$ norm vanish when $\alpha$ or $\beta$ are set to zero. 
Similar behaviours were observed for the wave equation in \cite[Fig.~6]{SpaceTimeTDG}, for the Helmholtz equation in \cite[Fig.~7--8]{GHP09} (concerning the flux parameters) and in \cite[\S5.1]{IGM17} (concerning the volume penalisation parameter).

The $\calO(h^{p+1/2})$ convergence rates observed coincide with those proved in the bound \eqref{eq:Convergence2}.
If, instead of using the $\Tnorm{\cdot}\DG$ norm, we measure the error at final time only, specifically in the $L^2(\Omega\times\{T\})$ norm for both the $v$ and the $\bsigma$ components, we obtain $\calO(h^{p+1})$ convergence rates (we do not report the values here), i.e.\ they are half a power higher than those in the $\Tnorm{\cdot}\DG$ norm.
The same half-order difference has been observed for the non-Trefftz version of the same method and $c=1$ in Table~1 of \cite{BMPS20}; see also the considerations after Proposition~6.5 therein.
Moreover, being the $L^2(\Omega\times\{T\})$ norm parameter-independent, the errors are slightly smaller for the flux parameter values suggested in \eqref{eq:FluxChoice}.

\subsection{Approximation properties of quasi-Trefftz spaces}\label{sec:numspace}

We compare the numerical error for different choices of the discretisation spaces:
the quasi-Trefftz space $\QW^p\Th$ of \eqref{eq:QWglobal}, 
the first-order derivatives $\PP^p\Th$ of the full polynomial space, and 
the Trefftz space $\IW^p\Th$, respectively defined by
\begin{align*}
\PP^p\Th:=&
\Big\{\wt\in\bH\Th: \; w|_K=\partial_t u,\;\btau |_K=-{\frac1{\rho(\bx_K)}}\nabla u, \; u\in \IP^{p+1}(K), \;\forall K \in\Th\Big\},
\qquad p\in\IN_0,
\\
\IW^p\Th:=&
\Big\{\wt\in\bH\Th: \; w|_K=\partial_t u,\;\btau |_K=-{\frac1{\rho(\bx_K)}}\nabla u, \; u\in \IP^{p+1}(K),\;\\
&\hspace{60mm} {-\nabla\cdot\Big(\frac1{\rho(\bx_K)}\nabla u\Big)+G(\bx_K)\,}\partial_t^2 u=0 \iin K,\forall K \in\Th\Big\}.
\end{align*}
The space $\IW^p\Th$ is the Trefftz space (as in \cite[\S6.2]{MoPe18}) for the approximated IBVP in which the
{parameters $\rho$ and $G$ are substituted by elementwise-constant approximants.}
We have $\dim\IW^p\Th=\dim\QW^p\Th$, {$\IW^p\Th\subset\PP^p\Th$, and, if $\rho$ is constant, $\QW^p\Th\subset\PP^p\Th$}.

We consider the problems \eqref{eq:exsolAiry2D} and \eqref{eq:exsolPower2D} in 2+1 dimensions and set initial and boundary conditions accordingly.
We use meshes that are Cartesian product between a spatial, quasi-uniform, unstructured, triangular mesh in $(0,1)^2$ with spatial meshwidth $h$, and a uniform mesh in time with time-step $h_t\approx h$.
Therefore all elements are right triangular prisms and all their sides have comparable lengths.
We set the volume penalisation and numerical flux parameters to the values in \eqref{eq:muChoice} and \eqref{eq:FluxChoice}, respectively.
The errors are measured in $\Tnorm{\cdot}\DG$ norm.
The results are displayed in \Cref{fig:numspace,fig:numspacedof}.

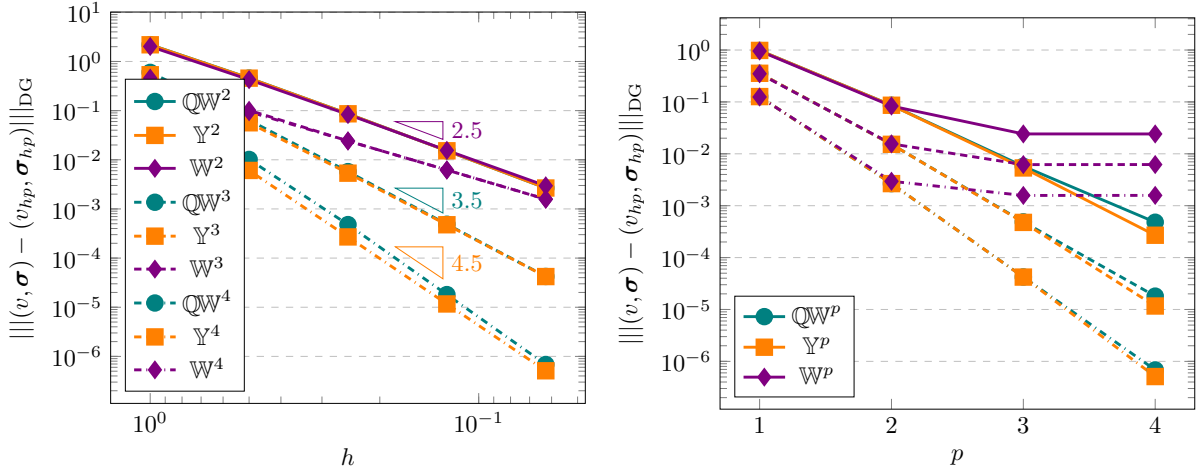
\begin{figure}[ht]
    \resizebox{.49\linewidth}{!}{
        \begin{tikzpicture}
            \begin{loglogaxis}
                [ xlabel={$h$},
                ylabel={$\Tnorm{\vs-\vsh}\DG$},
                ymajorgrids=true,
                grid style=dashed,
                legend pos=south west,
                title={},
                cycle list name=paulcolors,
                x dir=reverse,				
                ]
                \foreach \p in {2,3,4}{
                    \addplot+[discard if not={p}{\p}] table [x=h, y=dgerrorccrc, col sep=comma] {./cartquad2gppw.csv};
                    \addlegendentryexpanded{$\QW^\p$};
                    \addplot+[discard if not={p}{\p}] table [x=h, y=dgerrorccrc, col sep=comma] {./cartquad2pol.csv};
                    \addlegendentryexpanded{$\PP^\p$};
                    \addplot+[discard if not={p}{\p}] table [x=h, y=dgerrorccrc, col sep=comma] {./cartquad2trefftz.csv};
                    \addlegendentryexpanded{$\mathbb W^\p$};
                }
                \logLogSlopeTriangle{0.7}{0.1}{0.72}{2.5}{violet}; 
                \logLogSlopeTriangle{0.7}{0.1}{0.55}{3.5}{teal};
                \logLogSlopeTriangle{0.7}{0.1}{0.4}{4.5}{orange};
            \end{loglogaxis}
        \end{tikzpicture}
    }
    \resizebox{.49\linewidth}{!}{
        \begin{tikzpicture}
            \begin{semilogyaxis}
                [ xlabel={$p$},
                ylabel={$\Tnorm{\vs-\vsh}\DG$},
                ymajorgrids=true,
                grid style=dashed,
                legend pos=south west,
                title={},
                cycle list name=paulcolors,
                xtick={1,2,3,4}
                ]
                \foreach \hnr in {2,3,4}{
                    \addplot+[discard if not={hnr}{\hnr}] table [x=p, y=dgerrorccrc, col sep=comma] {./cartquad2gppw.csv};
                    \addplot+[discard if not={hnr}{\hnr}] table [x=p, y=dgerrorccrc, col sep=comma] {./cartquad2pol.csv};
                    \addplot+[discard if not={hnr}{\hnr}] table [x=p, y=dgerrorccrc, col sep=comma] {./cartquad2trefftz.csv};
                }
                    \addlegendentryexpanded{$\QW^p$};
                    \addlegendentryexpanded{$\PP^p$};
                    \addlegendentryexpanded{$\mathbb W^p$};
            \end{semilogyaxis}
        \end{tikzpicture}
    }
\caption{Comparison of different approximation spaces for problem \eqref{eq:exsolPower2D} as described in Section~\ref{sec:numspace}. 
Left panel: $h$-convergence.
Right panel: $p$-convergence; the three sets of curves correspond to $h=2^{-2},2^{-3},2^{-4}$.}
\label{fig:numspace}
\end{figure}

\Cref{fig:numspace} focuses on \eqref{eq:exsolPower2D}.
The left panel plots the error against the mesh size for different values of $p$: the quasi-Trefftz space and the full polynomial space show the same, optimal, rate of convergence $\calO(h^{p+1/2})$.  
The full polynomial space has a slightly smaller error throughout.
The standard Trefftz space, however, does not achieve convergence with the same rate, but the rate is instead limited by roughly $\calO(h^2)$; this is due to the low-order (piecewise-constant) approximation of $c$ in the construction of the basis functions.
The right panel of \Cref{fig:numspace} shows the error against the polynomial degree $p$ for mesh sizes $h=2^{-2},2^{-3},2^{-4}$.
We observe exponential convergence for both the quasi-Trefftz space and the full polynomial space.
As expected, the standard Trefftz space does not lead to convergence in $p$ because the approximation of $c$ does not improve with $p$-refinement.

\begin{figure}[ht]
    \resizebox{.49\linewidth}{!}{
        \begin{tikzpicture}
            \begin{loglogaxis}
                [ xlabel={\#dof},
                ylabel={$\Tnorm{\vs-\vsh}\DG$},
                ymajorgrids=true,
                grid style=dashed,
                legend pos=south west,
                title={},
                cycle list name=paulcolors,
                ]
                \foreach \hnr in {3}{
                    \addplot+[discard if not={hnr}{\hnr}] table [x=ndof, y=dgerrorccrc, col sep=comma] {./cartairy2gppw.csv};
                    \addplot+[discard if not={hnr}{\hnr}] table [x=ndof, y=dgerrorccrc, col sep=comma] {./cartairy2pol.csv};
                    \addplot+[discard if not={hnr}{\hnr}] table [x=ndof, y=dgerrorccrc, col sep=comma] {./cartairy2trefftz.csv};
                }
                    \addlegendentryexpanded{$\QW^p$};
                    \addlegendentryexpanded{$\PP^p$};					
                    \addlegendentryexpanded{$\IW^p$};
                \foreach \hnr in {4}{
                    \addplot+[discard if not={hnr}{\hnr}] table [x=ndof, y=dgerrorccrc, col sep=comma] {./cartairy2gppw.csv};
                    \addplot+[discard if not={hnr}{\hnr}] table [x=ndof, y=dgerrorccrc, col sep=comma] {./cartairy2pol.csv};
                    \addplot+[discard if not={hnr}{\hnr}] table [x=ndof, y=dgerrorccrc, col sep=comma] {./cartairy2trefftz.csv};
				}
            \end{loglogaxis}
        \end{tikzpicture}
    }	
    \resizebox{.49\linewidth}{!}{
        \begin{tikzpicture}
            \begin{loglogaxis}
                [ xlabel={time (s)},
                ylabel={$\Tnorm{\vs-\vsh}\DG$},
                ymajorgrids=true,
                grid style=dashed,
                legend pos=south west,
                title={},
                cycle list name=paulcolors,
                ]
                \foreach \hnr in {4}{
                    \addplot+[discard if not={hnr}{\hnr}] table [x=time, y=dgerrorccrc, col sep=comma] {./cartairy2gppw.csv};
                    \addplot+[discard if not={hnr}{\hnr}] table [x=time, y=dgerrorccrc, col sep=comma] {./cartairy2pol.csv};
                    \addplot+[discard if not={hnr}{\hnr}] table [x=time, y=dgerrorccrc, col sep=comma] {./cartairy2trefftz.csv};
                }
                    \addlegendentryexpanded{$\QW^p$};
                    \addlegendentryexpanded{$\PP^p$};
                    \addlegendentryexpanded{$\IW^p$};
            \end{loglogaxis}
        \end{tikzpicture}
    }
\caption{Left panel: comparison of different approximation spaces in terms of numbers of degrees of freedom for problem \eqref{eq:exsolAiry2D}, as described in \S\ref{sec:numspace}.
The continuous lines correspond to a mesh with $h=2^{-3}$ and the dashed ones to $h=2^{-4}$.
The nodes in each line correspond to polynomial degrees $p=1,2,3,4$.
Right panel: the same errors (for $h=2^{-4}$) plotted against the computational time, including calculating the basis functions, assembly and solve.
Both plots show that the quasi Trefftz space $\QW^p\Th$ allows more efficient computations than the full polynomial space $\PP^p\Th$.}
    \label{fig:numspacedof}
\end{figure}
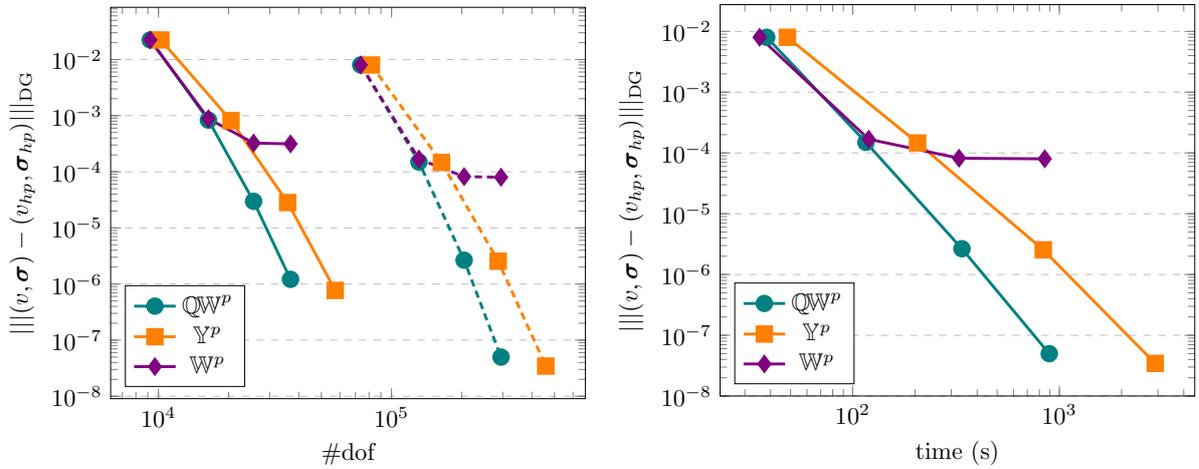

In \Cref{fig:numspacedof} we switch to problem \eqref{eq:exsolAiry2D} and plot (in the left panel) the error against the global number of degrees of freedom, on a fixed mesh, for increasing polynomial degrees $p$. 
The continuous and dashed lines correspond to two different mesh sizes, $h=2^{-3}$ and $h=2^{-4}$ respectively.
The right panel plots the same error against the computational time.
These plots illustrate the power of the quasi-Trefftz approach compared to the full polynomial approach, as discussed in Remark \ref{rem:Dimension}: for comparable numbers of degrees of freedom the quasi-Trefftz method can achieve much higher accuracy. 
In this example the accuracy improvement is up to about one and a half orders of magnitude, as observed when comparing the errors and the number of degrees of freedom for $\QW^4\Th$ and $\PP^3\Th$.

\subsection{Tent-pitched meshes}\label{sec:nummesh}

The meshes used in all numerical examples in \S\ref{sec:numcoeff}--\ref{sec:numspace} are Cartesian products between a mesh in space and one in time.
Thus the numerical solution has to be computed simultaneously for all the elements corresponding to the same time interval; this is analogous to an implicit time-stepping scheme.

We now discuss an alternative space--time meshing strategy: tent pitching. 
We call a mesh ``tent-pitched'' if all interior faces are space-like according to the definition in \eqref{eq:HorVerFaces}.
This implies that the numerical solution in a given element $K$ can be computed only from the numerical solutions on the elements that are adjacent to $K$ and lying ``before'' $K$, thanks to the causality constraint (represented in the DG formulation \eqref{eq:DG} by the use of the $v\hp^-$ and $\bsigma\hp^-$ traces on $\Fspa$).
The solution can be computed independently, and in parallel, in several mesh elements and the solution procedure resembles an explicit time-stepping.

\begin{figure}[ht!]\centering
\centering
\includegraphics[width=0.4\textwidth,clip, trim = 70 100 70 100]{./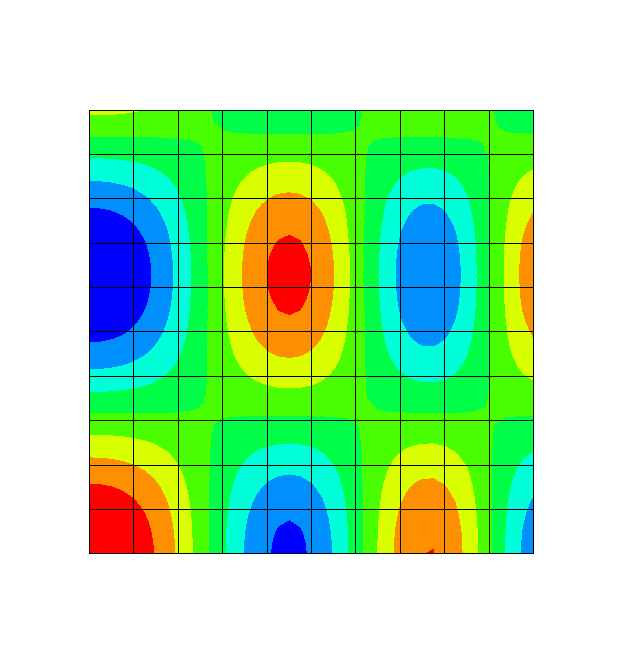}\qquad
\includegraphics[width=0.4\textwidth,clip, trim = 70 100 70 100]{./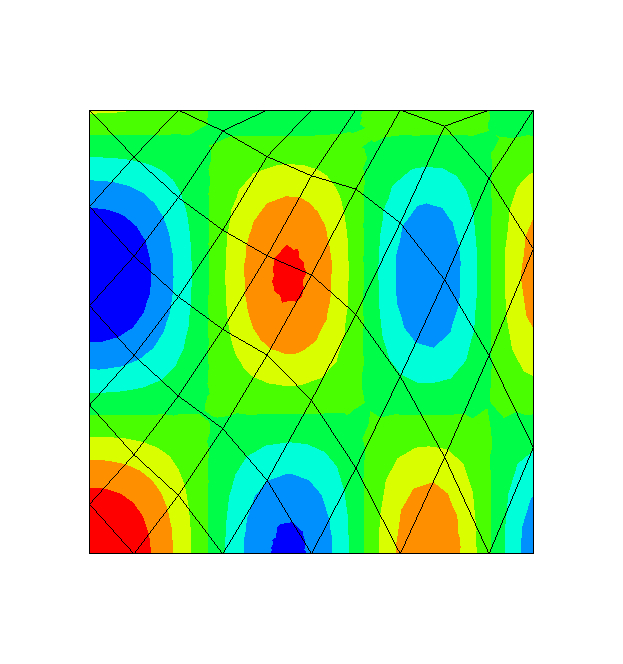}
\caption{A Cartesian-product mesh (left) and a tent-pitched mesh (right) on the domain $Q=(0,5)^2$.
Both show the solution $u$ of problem \eqref{eq:exsolAiry1D}.} 
\label{fig:mesh}
\end{figure}

An example of a 1+1-dimensional tent-pitched mesh on $Q=(0,5)\times(0,5)$ can be seen in \Cref{fig:mesh}.
This mesh is constructed for the wavespeed $c(x)=(1+x)^{-1/2}$ of problem \eqref{eq:exsolAiry1D}, thus the tents in the right part of the domain are allowed to be ``taller'' than those on the left, i.e.\ to have longer extension in the time direction, without violating the causality constraint of having slope bounded by $c^{-1}(x)$.

The algorithm used to produce the tent-pitched mesh used here can be found in \cite{GSW16}.
A closer look into the implementation of Trefftz functions on tent-pitched meshes is given in \cite{StockerSchoeberl}.

To optimize storage during the computations on a tent-pitched mesh we only need to store the solution furthest in time.
Therefore, in this section we measure the error at the final-time term in the definition of the DG norm \eqref{eq:DGnorm}:
\begin{equation}\label{eq:fterror}
\mathrm{error}(T)=\left( \N{\sqrt{G(\cdot)} \big(v(\cdot,T)-v\hp(\cdot,T)\big)}^2_{L^2(\Omega)}
+\N{{\sqrt{\rho(\cdot)}\big(}\bsigma(\cdot,T)-\bsigma\hp(\cdot,T){\big)}}^2_{L^2(\Omega)}\right)^{1/2},
\end{equation}
with final time $T=1$.
We use tent-pitched meshes in 2+1 dimensions to approximate 
problem \eqref{eq:exsolAiry2D}.

\pgfplotscreateplotcyclelist{paulcolors2}{%
teal,every mark/.append style={solid,fill=teal},mark=*,very thick,mark size=3pt\\%
orange,every mark/.append style={solid,fill=orange},mark=square*,very thick,mark size=3pt\\%
teal,densely dashed,every mark/.append style={solid,fill=teal},mark=*,very thick,mark size=3pt\\%
orange,densely dashed,every mark/.append style={solid,fill=orange},mark=square*,very thick,mark size=3pt\\%
teal,dash dot,every mark/.append style={solid,fill=teal},mark=*,very thick,mark size=3pt\\%
orange,dash dot,every mark/.append style={solid,fill=orange},mark=square*,very thick,mark size=3pt\\%
}

\begin{figure}[ht]
    \resizebox{.49\linewidth}{!}{
        \begin{tikzpicture}
            \begin{loglogaxis}
                [ xlabel={\#dof$^{-1/3}$},
                ylabel={error$(T)$},
                ymajorgrids=true,
                grid style=dashed,
                legend pos=south west,
                cycle list name=paulcolors2,
                x dir=reverse
                ]
            \foreach \p in {3,4}{
                    \addplot+[discard if not={p}{\p}] table [x=cndof, y=error, col sep=comma] {./tentairygppw2.csv};
                    \addlegendentryexpanded{$\QW^\p$ tents};
                }
            \foreach \p in {3,4}{
                    \addplot+[discard if not={p}{\p}] table [x=cndof, y=errorccrc, col sep=comma] {./cartairy2gppw.csv};
                    \addlegendentryexpanded{$\QW^\p$};
                }
                \logLogSlopeTriangle{0.6}{0.1}{0.7}{4}{teal};
                \logLogSlopeTriangle{0.6}{0.1}{0.35}{5}{orange};
        \end{loglogaxis}
    \end{tikzpicture}
}
    \resizebox{.49\linewidth}{!}{
        \begin{tikzpicture}
            \begin{loglogaxis}
                [ xlabel={time (s)},
                ylabel={error$(T)$},
                ymajorgrids=true,
                grid style=dashed,
                legend pos=south west,
                cycle list name=paulcolors2,
                ]
            \foreach \p in {3,4}{
                    \addplot+[discard if not={p}{\p}] table [x=time1, y=error, col sep=comma] {./tentairygppw2.csv};
                    \addlegendentryexpanded{$\QW^\p$ tents};
                }
            \foreach \p in {3,4}{
                    \addplot+[discard if not={p}{\p}] table [x=time, y=errorccrc, col sep=comma] {./cartairy2gppw.csv};
                    \addlegendentryexpanded{$\QW^\p$};
                }
        \end{loglogaxis}
    \end{tikzpicture}
}
\caption{The final-time error against the number of degrees of freedom (left) and against the computational time (right) for the sequential solution of Problem~\eqref{eq:exsolAiry2D} on tent-pitched (continuous lines) and Cartesian meshes (dashed lines).
}
\label{fig:tent1}
\end{figure}
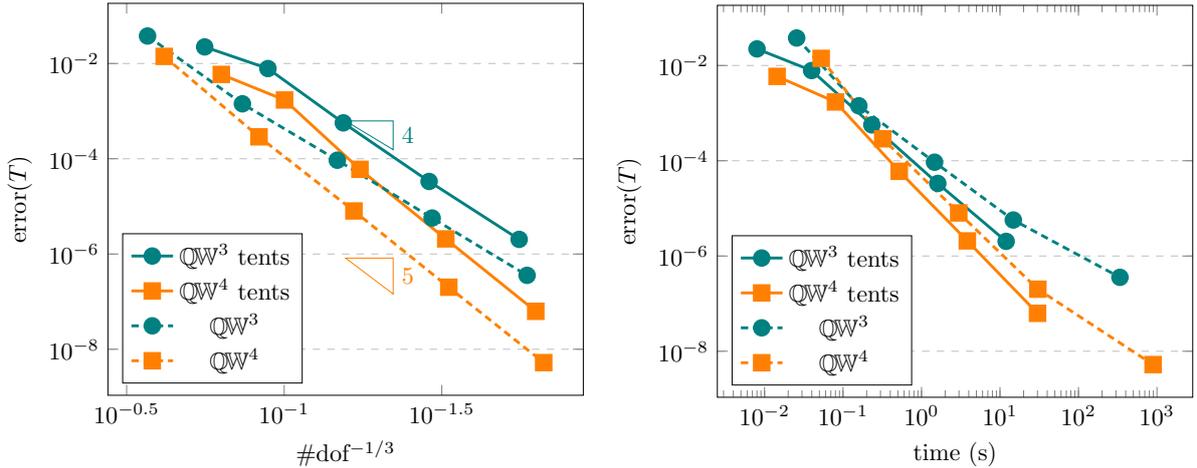

We first compare the error committed on tent-pitched meshes against that on Cartesian-product meshes (of the same kind of those in \S\ref{sec:numspace}).
For a fair comparison we plot the error in terms of the number of degrees of freedom, for varying mesh sizes. 
On the left panel of \Cref{fig:tent1} we observe optimal convergence rates of $\calO(\#\mathrm{dof}^{-(p+1)/3})$ for both meshing strategies, which corresponds to $\calO(h^{p+1})$.
The Cartesian-product mesh outperforms the tent-pitched mesh in terms of efficiency per degrees of freedom, due to the fact that we need more tent elements to cover the same space--time volume.
However, in terms of computational time, shown in the right panel of \Cref{fig:tent1}, the tents perform better since they do not require the solution of any large linear system, even though in this comparison the solution is only solved sequentially without any parallelisation.

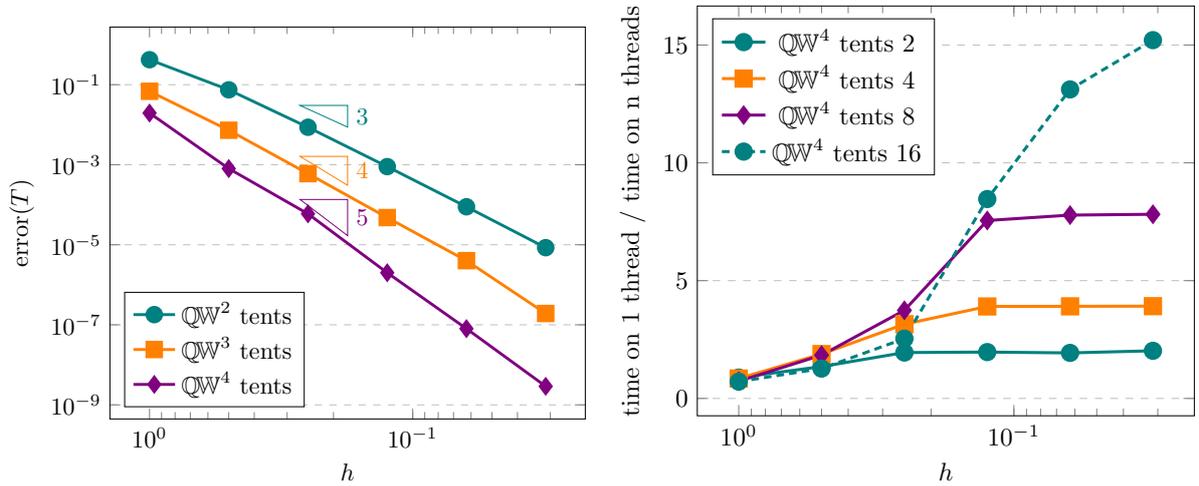
\begin{figure}[ht]
    \resizebox{.49\linewidth}{!}{
        \begin{tikzpicture}
            \begin{loglogaxis}
                [ xlabel={$h$},
                ylabel={error$(T)$},
                ymajorgrids=true,
                grid style=dashed,
                legend pos=south west,
                cycle list name=paulcolors,
                x dir=reverse
                ]
            \foreach \p in {2,3,4}{
                    \addplot+[discard if not={p}{\p}] table [x=h, y=error, col sep=comma] {./tentquadgppw2.csv};
                    \addlegendentryexpanded{$\QW^\p$ tents};
                }
                \logLogSlopeTriangle{0.5}{0.1}{0.8}{3}{teal};
                \logLogSlopeTriangle{0.5}{0.1}{0.67}{4}{orange};
                \logLogSlopeTriangle{0.5}{0.1}{0.56}{5}{violet};
        \end{loglogaxis}
    \end{tikzpicture}
}
\resizebox{.49\linewidth}{!}{
    \begin{tikzpicture}
        \begin{semilogxaxis}
            [ xlabel={$h$},
            ylabel={time on 1 thread / time on n threads},
            ymajorgrids=true,
            grid style=dashed,
            legend pos=north west,
            cycle list name=paulcolors,
            x dir=reverse
            ]
            \foreach \p in {4}{
            \foreach \time in {2,4,8,16}{
                \addplot+[discard if not={p}{\p}] table [x=h, y=timescale\time, col sep=comma] {./tentquadgppw2.csv};
                \addlegendentryexpanded{$\QW^\p$ tents \time};
            }
            }
        \end{semilogxaxis}
    \end{tikzpicture}
}
\caption{Left panel: the error on tent-pitched meshes for problem \eqref{eq:exsolPower2D}.
Right panel: the speedup in the computational time for 2 to 16 threads.
}
\label{fig:tent2}
\end{figure}

Next we study the effect of parallelisation.
We measure the speedup obtained by increasing the number of threads, i.e.\ the maximum number of elements on which the solution is computed independently in parallel.
Now we consider problem \eqref{eq:exsolPower2D}; the final-time error in terms of the mesh size is shown in the left panel of \Cref{fig:tent2}.
The speedup in the computational time for 2, 4, 8 and 16 threads is shown in the right panel of \Cref{fig:tent2}. 
We observe that the speedup factor is quite close to the number of threads.
The figure shows that increasing the number of threads is beneficial only for moderate mesh sizes, as otherwise there are not enough independent tents.
All timings were performed on a server with two Intel(R) Xeon(R) CPU E5-2687W v4, with 12 cores each.

\subsection{Gaussian pulse in a non-homogenous medium}\label{sec:numgauss}

We illustrate the propagation of a vertical Gaussian pulse traveling through a medium with wavespeed varying along the $x_2$-direction, $G(x_1,x_2)=1+x_2$.
The initial conditions are given by
\begin{equation*}
\bsigma_0(x_1,x_2) = \Big(-\frac{2x_1}{\delta^2} \ee^{-\frac{x_1^2}{\delta^2}},0\Big),
\qquad v_0(x) = 0 \qquad\oon \Omega=(0,1)^2,
\end{equation*}
setting $\delta=2^{-5}$.
We choose homogeneous Neumann boundary conditions, a tent-pitched mesh as discussed in the previous section, spatial mesh size $h=2^{-7}$ and polynomial degree $p=3$.
Snapshots of the solution are shown in \Cref{fig:pulse}.
At $T=0$ the initial condition is constant in $x_2$-direction.
In the next snapshot, at $T=0.25$, we can see the expected effects of the variable wavespeed: at the top of the domain, the wave travels faster than at the bottom.
At $T=0.5$ the wavefront on the top side reaches the right border.
In the last image, at $T=0.75$, we can see the wave being reflected from the right boundary.
Boundary effects due to the homogeneous Neumann boundary conditions at the top and bottom of the domain can also be observed.

In \Cref{fig:energy} we plot the energy \eqref{eq:Energy} for different spatial mesh sizes $h=2^{-5},2^{-6},2^{-7}$.
The energy is computed at constant times $t$ multiple of 0.0025 as 
$\calE(t;w,\btau):=\frac12\int_\Omega (c^{-2} w^2+|\btau|^2) \di S$,
by forcing the tent pitched mesh into slabs.
As observed in \S\ref{s:WellP}, the method is dissipative.
For $h=\delta=2^{-5}$ there are not enough elements to resolve the wave front with sufficient accuracy, and the energy dissipates very quickly. 
For the two finer meshes the energy loss behaves much better; in particular for $h=2^{-7}$ only 0.076\% of the initial energy is lost at the final time $T=1$.

\begin{figure}[ht]
\centering
\includegraphics[width=0.24\textwidth]{./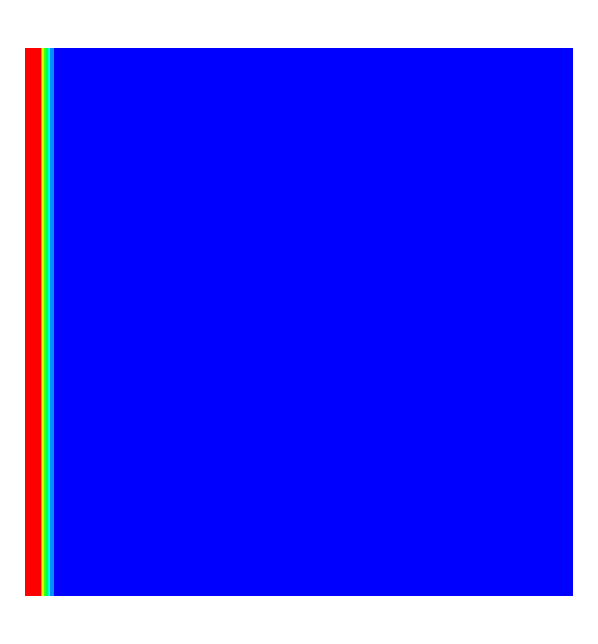}
\includegraphics[width=0.24\textwidth]{./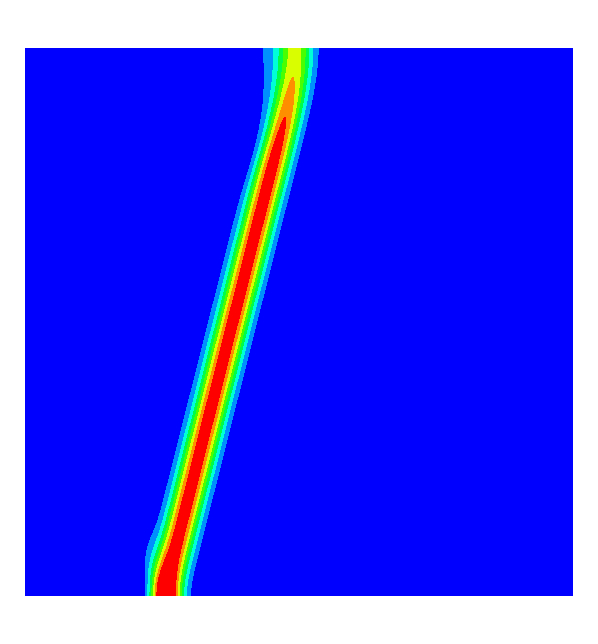}
\includegraphics[width=0.24\textwidth]{./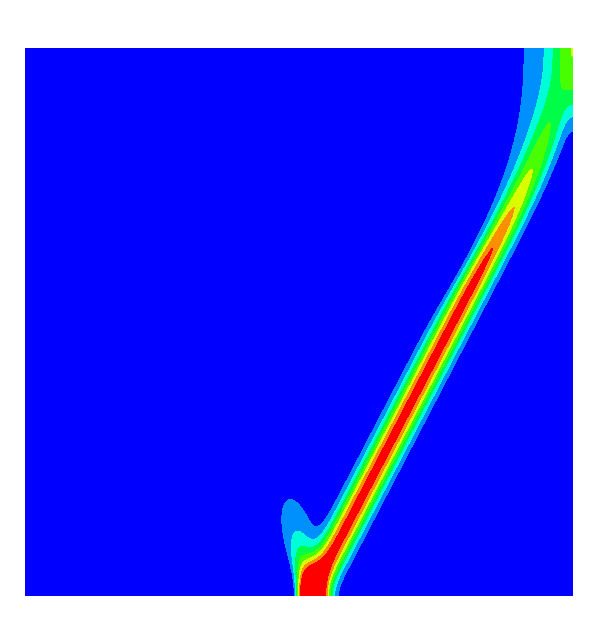}
\includegraphics[width=0.24\textwidth]{./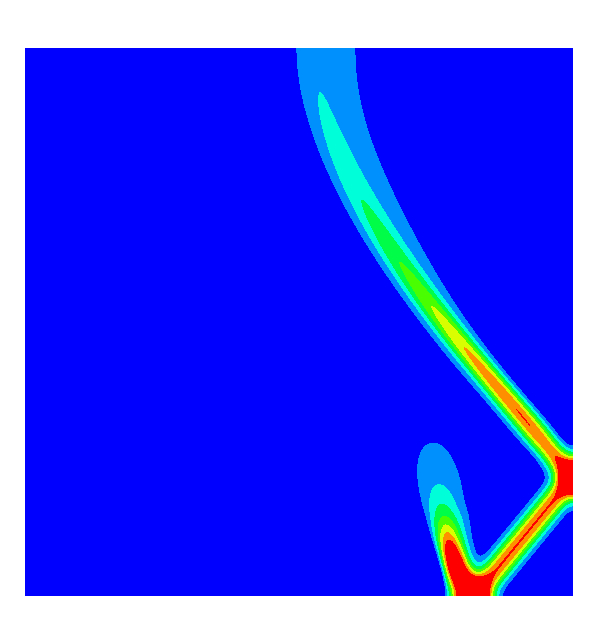}
\caption{Snapshots of the solution of the problem described in Section~\ref{sec:numgauss} at times $0, 0.25, 0.5, 0.75$.}
\label{fig:pulse}
\end{figure}

\pgfplotscreateplotcyclelist{paulcolors3}{%
orange,densely dashed,every mark/.append style={fill=orange}\\%
violet,densely dotted,every mark/.append style={fill=violet}\\%
teal,every mark/.append style={fill=orange}\\%
}
\begin{figure}[ht]
\resizebox{.45\linewidth}{!}{
    \begin{tikzpicture}
        \begin{axis}
            [ xlabel={time(s)},
            ylabel={energy},
            ymajorgrids=true,
            grid style=dashed,
            legend pos=south west,
            cycle list name=paulcolors3,
            ]
            \addplot+[very thick,discard if not ={h}{5.0}, mark=none] table [x=t, y=energy, col sep=comma] {./energy.csv};
            \addlegendentryexpanded{$h=2^{-5}$}
            \addplot+[very thick,discard if not ={h}{6.0}, mark=none] table [x=t, y=energy, col sep=comma] {./energy.csv};
            \addlegendentryexpanded{$h=2^{-6}$}
            \addplot+[very thick,discard if not ={h}{7.0}, mark=none] table [x=t, y=energy, col sep=comma] {./energy.csv};
            \addlegendentryexpanded{$h=2^{-7}$}
        \end{axis}
    \end{tikzpicture}
}
\caption{The energy \eqref{eq:Energy} of the numerical solution shown in \Cref{fig:pulse}.} 
\label{fig:energy}
\end{figure}
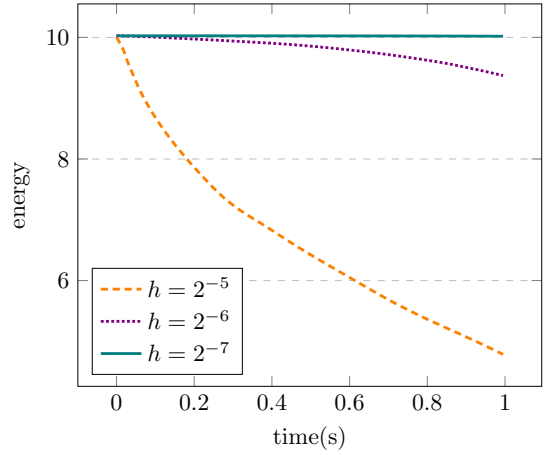

\subsection{Variable mass density}\label{sec:numrho}

We consider the setting given in \Cref{eq:exsolBessel}.
The exact solution is a first-order spherical Bessel function in space, corresponding to smoothly varying $G$ and $\rho$.
We solve in 1+1 dimensions over the space domain $\Omega=(2,3)$ using uniform Cartesian-product meshes.
The wavespeed $c(x)=(\rho G)^{-1/2}$ has range $[\sqrt{8/7},\sqrt{2}]$.

\begin{figure}[ht]
    \resizebox{.49\linewidth}{!}{
        \begin{tikzpicture}
            \begin{loglogaxis}
                [ xlabel={$h$},
                ylabel={$\Tnorm{\vs-\vsh}\DG$},
                ymajorgrids=true,
                grid style=dashed,
                legend pos=south west,
                cycle list name=paulcolors2,
                x dir=reverse
                ]
            \foreach \p in {2,3,4}{
                    \addplot+[discard if not={p}{\p}] table [x=h, y=dgerrornnn, col sep=comma] {./cartbesselgppw.csv};
                    \addlegendentryexpanded{$\QW^\p$};
                    \addplot+[discard if not={p}{\p}] table [x=h, y=dgerrornnn, col sep=comma] {./cartbesselmon.csv};
                    \addlegendentryexpanded{$\PP^\p$};
                }
                \logLogSlopeTriangle{0.5}{0.1}{0.85}{2.5}{teal};
                \logLogSlopeTriangle{0.5}{0.1}{0.65}{3.5}{orange};
                \logLogSlopeTriangle{0.5}{0.1}{0.45}{4.5}{violet};
        \end{loglogaxis}
    \end{tikzpicture}
}
\resizebox{.49\linewidth}{!}{
    \begin{tikzpicture}
        \begin{loglogaxis}
            [ xlabel={$h$},
            ylabel={condition number},
            ymajorgrids=true,
            grid style=dashed,
            legend pos=north west,
            cycle list name=paulcolors2,
            x dir=reverse
            ]
            \foreach \p in {2,3,4}{
                \addplot+[discard if not={p}{\p}] table [x=h, y=condnnn, col sep=comma] {./cartbesselgppw.csv};
                \addlegendentryexpanded{$\QW^\p$};
                \addplot+[discard if not={p}{\p}] table [x=h, y=condnnn, col sep=comma] {./cartbesselmon.csv};
                \addlegendentryexpanded{$\PP^\p$};
            }
        \end{loglogaxis}
    \end{tikzpicture}
}
\caption{
Numerical results for the approximation of the exact solution given in \Cref{eq:exsolBessel}.
Left panel: $h$-convergence rates. Right panel: condition number.
}
\label{fig:bessel}
\end{figure}
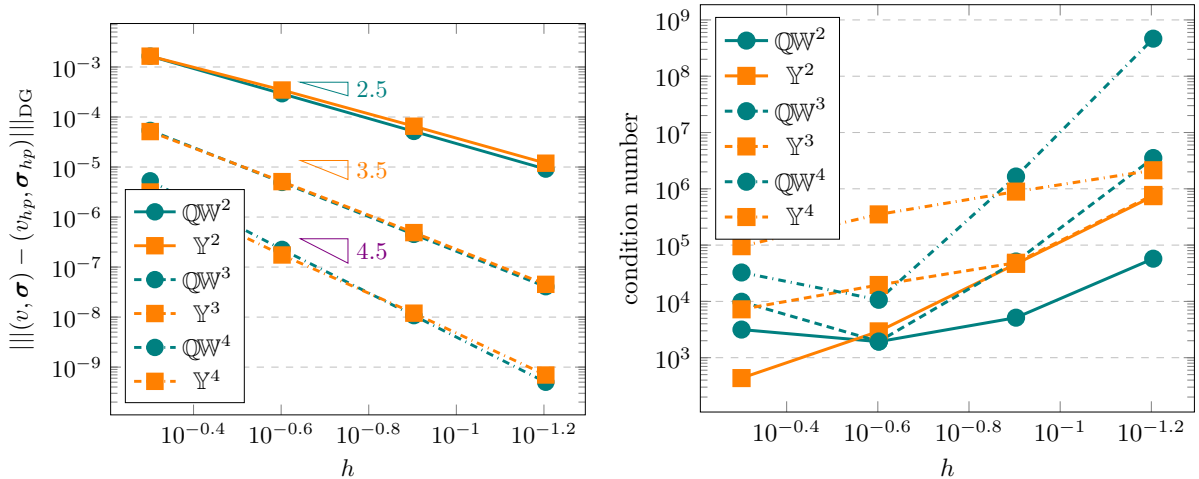

\begin{table}[!ht]\centering
\begin{tabular}{lllllll}
\toprule & 
\multicolumn{3}{c}{$\mu_1|_K=0$,} &
\multicolumn{3}{c}{$\mu_1|_K= r_{K,c} \N{c}_{L^\infty(K)}^{-1}$,} \\
&
\multicolumn{3}{c}{$\alpha=0,\ \beta=0$} &
\multicolumn{3}{c}{$\alpha=(\rho c)^{-1},\ \beta=\rho c$} \\
\midrule $p$ & DG-error & $L^2$-error & condition & DG-error & $L^2$-error & condition \\
\midrule
\csvreader[head to column names,filter=\p>4]{./cartbesselgppw.csv}{}
{\p
& \num[round-precision=2,round-mode=figures, scientific-notation=true]{\dgerrornnn} 
& \num[round-precision=2,round-mode=figures, scientific-notation=true]{\errornnn} 
& \num[round-precision=3,round-mode=figures, scientific-notation=true]{\condnnn} 
& \num[round-precision=2,round-mode=figures, scientific-notation=true]{\dgerrorccrc} 
& \num[round-precision=2,round-mode=figures, scientific-notation=true]{\errorccrc} 
& \num[round-precision=3,round-mode=figures, scientific-notation=true]{\condccrc} 
\tabularnewline
}
\\\addlinespace[-\normalbaselineskip]\bottomrule
\end{tabular}
\caption{Errors committed by the quasi-Trefftz DG method for different combinations of the numerical flux parameters and volume penalisation coefficient.}
\label{tab:bessel}
\end{table}

In \Cref{fig:bessel} we present results for $p=2,3,4$, and compare the quasi-Trefftz space and the full polynomial space $\PP^p$.
Penalisation and flux parameters are set to zero.
We observe optimal convergence rates in terms of $h$.
Despite the fact that, comparing the condition numbers, the quasi-Trefftz space for larger $p$ performs worse than the full polynomial space, the performance of the two methods are very similar in terms of both convergence rate and error level, with a lower number of degrees of freedom required for the quasi-Trefftz method.

We consider very large polynomial degrees $p=5,10,15,20$ on a uniform mesh with $h=2^{-3}$ and $h_t\approx h$, for two choices of the penalisation and flux parameters: (i) all set to zero, and (ii) the choices in \eqref{eq:FluxChoice} and \eqref{eq:muChoice}.
The results are given in \Cref{tab:bessel}, where we show the error in the $\Tnorm{\cdot}\DG$ norm and in the $L^2(Q)$ norm, as well as the condition number of the system. 
Both cases show a similar behaviour: for large $p$ the condition number exceeds the inverse of machine precision and the error slightly deteriorates.
The choice $\alpha=\beta=\mu_1=0$ shows better $\Tnorm{\cdot}\DG$-error as several terms in the norm vanish, while in the $L^2$-norm choosing non-zero parameters gives a smaller error.

{
\subsection{Non-smooth solution}\label{sec:numl}
We consider the IBVP \eqref{eq:IBVP} with a hat function as initial condition:
\begin{align}\label{eq:numl}
    \bsigma_0(x)=v_0(x) = \max(0.25-|x|,0), \qquad
    G(x)=(1+x)^{-2},\qquad \rho=1,    \qquad\oon \Omega=(-0.5,0.5),
\end{align}
such that $u_0(x)=\int^x \bsigma_0(y)\di y\in H^2(\Omega)\setminus C^2(\Omega)$.
We choose a Cartesian-product mesh, such as the one on the left in Figure~\ref{fig:mesh}, with $h_t\approx h$.
We use homogeneous Neumann boundary conditions and stop the computations at $T=0.1$, before the wave interacts with the boundary.
We measure the error, as defined in \eqref{eq:fterror}, at $T=0.1$, against a reference solution computed on a very fine mesh with $h=2^{-12}$.
Due to the poor regularity of the initial condition, the order of convergence we can expect is limited to $\mathcal O(h)$ regardless of the polynomial degree used.
Therefore, we choose polynomial order $p=0$, so that the quasi-Trefftz space contains only piecewise-constant functions.
Results are shown in \Cref{tab:numl}, where we observe that the rate of convergence tends to 1.
This suggests that the convergence rates of Theorem~\ref{thm:Convergence} hold also for solutions $u\in H^{p+2}\Th$ (as opposed to $C^{p+2}\Th$, recall Remark~\ref{rem:CvsH}).

\begin{table}[!ht]\centering
\begin{tabular}{lll}
\toprule 
$h$ & error & rate \\
\midrule 
\csvreader[head to column names]{./carth2.csv}{}
{
$2^{-\hnr}$
& \num[round-precision=2,round-mode=figures, scientific-notation=true]{\terror} 
& \num[round-precision=2,round-mode=figures, scientific-notation=true]{\trate} 
\tabularnewline
}
\\\addlinespace[-\normalbaselineskip]\bottomrule
\end{tabular}
\caption{{Errors committed by the quasi-Trefftz DG method for a non-regular solution with parameters as in 
\eqref{eq:numl}.}}
\label{tab:numl}
\end{table}

\begin{figure}[ht]
\centering
\includegraphics[width=\textwidth,clip, trim = 0 70 0 70]{./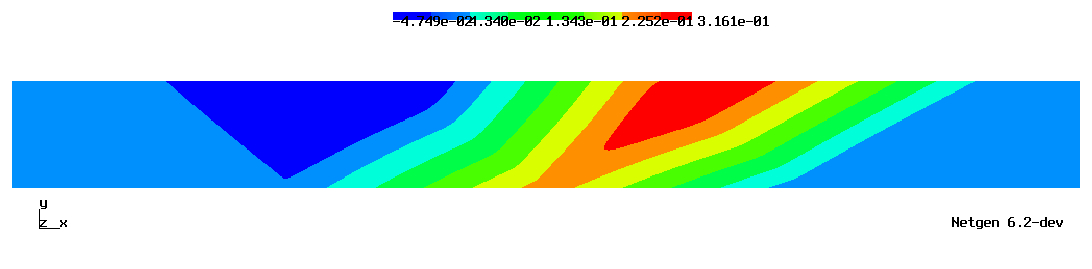}\\ \ \\
\includegraphics[width=\textwidth,clip, trim = 0 70 0 70]{./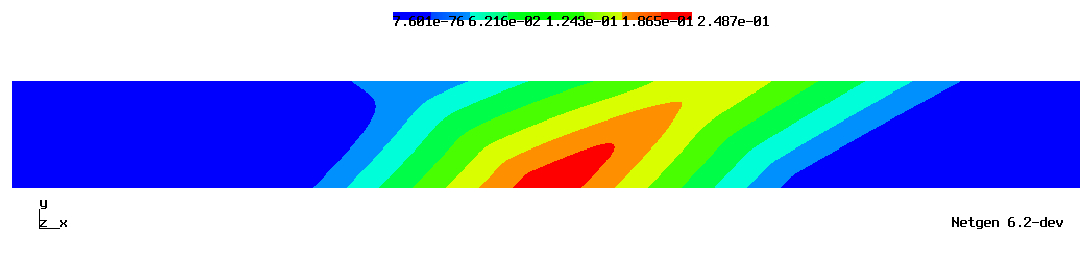}
\caption{{Solution of the first-order wave equation with initial conditions as in \eqref{eq:numl} on a mesh with $h=2^{-10}$, with $v$ shown on top and $\bsigma$ on the bottom.}}
\label{fig:numlsol}
\end{figure}
}

\phantomsection
\section*{Acknowledgments}
\addcontentsline{toc}{section}{Acknowledgments}
The authors would like to acknowledge the kind hospitality of the Erwin Schr\"odinger International Institute for Mathematics and Physics (ESI), where part of this research was developed under the frame of the Thematic Programme {\it Numerical Analysis of Complex PDE Models in the Sciences}.

L.-M. Imbert-G\'erard acknowledges support from the US National Science Foundation: this material is based upon work supported by the NSF under Grants No. DMS-1818747 and DMS-2105487.

A.\ Moiola acknowledges support from GNCS--INDAM, from PRIN project ``NA\_FROM-PDEs'' and from MIUR through the ``Dipartimenti di Eccellenza'' Programme (2018--2022)---Dept.\ of Mathematics, University of Pavia.

P. Stocker has been supported by the Austrian Science Fund (FWF) through the projects F~65 and W~1245.

\phantomsection
\addcontentsline{toc}{section}{References}
\bibliographystyle{siam}
\bibliography{references_andrea2020}

\end{document}